\newcommand{\N}{\mathcal N}
\newcommand{\C}{\mathbb C}
\newcommand{\s}{\text{span }}
\newcommand{\adj}{\text{adj}}
\newtheorem{theorem}{Theorem}
\newtheorem{observation}{Observation}
\newtheorem{lemma}{Lemma}
\newtheorem{corollary}{Corollary}
\newtheorem{definition}{Definition}
\title{Classification of Certain Regular Subalgebras of $\mathfrak{sl}(n, \C)$ up to Conjugacy}
\author{By - Shreya Dhar}
\date{June 3, 2024}
\begin{document}
\maketitle
\begin{abstract}
    In this paper we will be classifying some regular upper-triangular subalgebras  of $\mathfrak{sl}(n, \C)$ up to conjugacy by matrices in $\text{SL}(n, \C)$. We do so for dimension 2, codimension 1, and codimension 2 subalgebras. We prove some general results for codimension $k$. The approach we use reduces an abstract classification problem to a combinatorial one, which we solve through a mixture of inductive and computational approaches.
\end{abstract} 
\section{Notation}
We define some notation.\\ Let $\tau_{ij}$ refer to the standard transposition matrix. Let $n \in \N$. Let $E_{ij}$ and $H_i = E_{i, i} - E_{i+1,i+1} $ be the standard basis of $\mathfrak{sl(n, \mathbb{C}})$. 
We also define, $H_{pq} = E_{pp} - E_{qq} = $. Let $E = \{E_{ij}: 1 \leq i,j \leq n-1, i < j\}$. This is the the basis of upper-triangular nilpotent elements.\\
Let $H = \{H_i: 1 \leq i \leq n -1\}$. This is the basis of diagonal elements.\\
Let $H' = \{H_{pq}: p < q\}$.\\
A regular subalgebra is one that is centralized by a Cartan subalgebra. In this paper, we shall be looking at regular subalgebras spanned by elements of $E$ and $H$. 
\begin{definition} 
A star matrix is a visual way to represent the nilpotent basis of the subalgebra $S$ of $\mathfrak{sl}(n, \C)$. In the $n \times n$ matrix, we have a $\ast$ in the $ij$th position for every $E_{ij}$ in the basis, and $0$ when it is not in the basis. \\
Additionally, for solvable algebras, we will have linear combinations of the $H_i$ in the diagonal.
\label{def: regular subalgebra and star matrix}
\end{definition} 
The star matrix represents an arbitrary element in the subalgebra; a star indicates that this position in the matrix can be non-zero. Once we look at right/left action of the algebra on $\C^n$ and column/row rank, we shall calculate those using the star matrix. For example, $    \begin{bmatrix}
        0 & 0 & \ast & \ast\\
        0 & 0 & 0 & 0\\
        0 & 0 & 0 & \ast\\
        0 & 0 & 0 & 0
    \end{bmatrix} $ 
is the star matrix of a purely nilpotent subalgebra with basis $E_{13}, E_{14}, E_{34}$ in $\mathfrak{sl}(4, \C)$. $\begin{bmatrix}
        a & 0 & \ast & \ast\\
        0 & b-a & 0 & 0\\
        0 & 0 & -b & 0\\
        0 & 0 & 0 & 0
    \end{bmatrix} $
This is an example of a solvable subalgebra of $\mathfrak{sl}(4, \C)$, with basis $E_{13}, E_{14}, H_1, H_2$. We may write a star matrix as follows:
\begin{equation*}
    \begin{bmatrix}
    0 & \ast_{12} & \ast_{13} & \ast_{14} \\
    0 & 0 & 0 & 0\\
    0 & 0 & 0 & 0\\
    0 & 0 & 0 & 0\\    
    \end{bmatrix} \cdot \begin{bmatrix}
    \ast_1 \\ \ast_2 \\ \vdots \\ 0 
    \end{bmatrix} = \begin{bmatrix}
    \ast_1 \\ 0 \\ 0 \\ 0
    \end{bmatrix} 
\end{equation*}
$\ast$ is a potential non-zero entry (as the star matrix represents all the possible elements of the algebra) and the sub-script indicates which row/column it is in. 
\begin{definition}
    We define ${\C^n}_{i} =  \begin{bmatrix}
        \ast_1 \\ \vdots \\ \ast_{i} \\ 0  \\ \vdots \\ 0 
    \end{bmatrix}$ as the column vector where the entries $\ast_i \in \C$. 
\end{definition}

\section{General Theorems, Lemmas, and Definitions }
\subsection{Lie Bracket}
The general Lie bracket is as follows:
\begin{equation}
    [E_{ij}, E_{kl}] = E_{ij}E_{kl} - E_{kl}E_{ij} = \delta_{jk}E_{il} - \delta_{li}E_{kj}
    \label{Lie 1}
\end{equation}
\begin{equation*}
    [E_{ij}, H_k] = E_{ij}E_{kk} - E_{ij}E_{k+1,k+1} - E_{kk}E_{ij} + E_{k+1,k+1}E_{ij} = \delta_{jk}E_{ik} - \delta_{j, k+1}E_{i,k+1} - \delta_{ki}E_{kj} + \delta_{k+1,i}E_{k+1,j}
    \label{Lie 2}
\end{equation*}
\begin{equation*}
    [H_i, H_k] = 0
    \label{Lie 3}
\end{equation*}
The last equation follows from the fact that diagonal matrices commute. 

\begin{observation} Let the star matrix $N$ represent the maximal nilpotent subalgebra of a regular-algebra, i.e. it is spanned by the set of all $E_{ij}$'s in the sub-algebra. Then $N^2$ represents the commutator of the subalgebra. Further, $N^{2^k}$ represents the $k^{th}$ element in the derived series. 
\label{multiplication by star matrix}
\end{observation}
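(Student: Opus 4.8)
The plan is to reduce the statement to a combinatorial fact about the \emph{support} of the star matrix $N$ and then check it against ordinary matrix multiplication. Since $N$ encodes the maximal nilpotent subalgebra, I regard it as the generic strictly upper-triangular element $X = \sum_{E_{ij}\in N} x_{ij}E_{ij}$, and I will show that the commutator subalgebra $[N,N] = \s\{[E_{ij},E_{kl}] : E_{ij},E_{kl}\in N\}$ has exactly the same set of $\ast$-positions as the matrix square $X^2$. The derived-series claim then follows by induction, since $N^2$ is itself a star matrix representing a monomial nilpotent algebra, so the same argument reapplies to it.

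First I would compute the brackets using \eqref{Lie 1}. For strictly upper-triangular basis elements ($i<j$, $k<l$) we have $[E_{ij},E_{kl}] = \delta_{jk}E_{il} - \delta_{li}E_{kj}$. The first term is nonzero exactly when $j=k$, producing $E_{il}$ with $i<j<l$; the second is nonzero exactly when $l=i$, producing $E_{kj}$ with $k<i<j$. A relabeling shows both terms encode the \emph{same} condition: a position $(a,b)$ with $a<b$ is produced iff there is an intermediate index $m$, $a<m<b$, with $E_{am},E_{mb}\in N$. Moreover the single bracket $[E_{am},E_{mb}] = E_{ab}$ (the second term drops since $\delta_{ba}=0$ for $a<b$) certifies that each such $E_{ab}$ genuinely lies in $[N,N]$. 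Hence $[N,N] = \s\{E_{ab} : \exists\, m,\ a<m<b,\ E_{am},E_{mb}\in N\}$.

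Next I would match this with matrix multiplication. The entry $(X^2)_{ab} = \sum_m x_{am}x_{mb}$ ranges over precisely those intermediate $m$ with $E_{am},E_{mb}\in N$. As a polynomial in the free parameters $x_{ij}$ it is a sum of \emph{distinct} monomials $x_{am}x_{mb}$, hence nonzero iff at least one such $m$ exists; no cancellation can occur. Thus the $\ast$-positions of $N^2$ are exactly the positions $(a,b)$ admitting an intermediate index, matching the description of $[N,N]$ above. Finally, since $N^2$ is again a star matrix spanned by $E_{ij}$'s, I reapply the same argument with $N^2$ in place of $N$: the commutator of the algebra with star matrix $N^2$ is $(N^2)^2 = N^4$, and inductively the $k$-th derived subalgebra has star matrix $N^{2^k}$.

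The main obstacle is the mismatch between the antisymmetric Lie bracket, which carries two terms, and associative matrix multiplication, which carries one: I must confirm that the term $-\delta_{li}E_{kj}$ never enlarges the support beyond what the product $XY$ already supplies, which is exactly the relabeling step above and is special to the strictly upper-triangular setting. A secondary point requiring care is the role of the diagonal generators in a solvable regular subalgebra: the brackets $[E_{ij},H_k]$ evaluate to scalar multiples $c\,E_{ij}$ of the same root vector, so they only reproduce elements already present in $N$ and leave the nilpotent support tracked by the squaring operation unchanged. I would state this explicitly so that the identification of $N^{2^k}$ with the derived series is unambiguous at the level of the nilpotent structure.
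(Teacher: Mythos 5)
The paper states this as an Observation and provides no proof at all---it is asserted and then used later (e.g.\ in Lemma \ref{Nilpotent inductive classification} and the lemmas on the $\N_{ij}$)---so there is no argument of the paper's to compare against; your write-up supplies the missing justification, and it is correct. The three points you isolate are exactly the ones that make the statement true: (i) in the strictly upper-triangular setting the two-term bracket $\delta_{jk}E_{il}-\delta_{li}E_{kj}$ collapses, after relabeling, to the single condition that there is an intermediate index $m$ with $E_{am},E_{mb}\in N$, and the bracket $[E_{am},E_{mb}]=E_{ab}$ certifies containment; (ii) genericity of the star entries rules out cancellation in $(X^2)_{ab}=\sum_m x_{am}x_{mb}$, since distinct $m$ give distinct monomials, so the $\ast$-support of the matrix square is precisely the set of positions admitting an intermediate index; (iii) $[N,N]$ is again spanned by $E_{ij}$'s, so the argument iterates, supports compose correctly ($(N^2)^2=N^4$ via chains of length $4$), and induction yields $N^{2^k}$ for the $k$-th derived term. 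It is worth noting that the same no-cancellation argument shows $\mathrm{supp}(N^m)$ equals the set of positions joined by a chain of length $m$ for \emph{every} $m$, i.e.\ it handles the lower central series; this is the form the paper actually invokes later when it takes powers such as $T_{i,i+1}^{\,n-i}$, so your proof covers those subsequent uses as well.

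One point to sharpen: your closing remark that the brackets $[E_{ij},H_k]=c\,E_{ij}$ ``leave the nilpotent support tracked by the squaring operation unchanged'' is too generous if ``the commutator of the subalgebra'' is read as $[L,L]$ for a solvable regular $L$. Those brackets produce \emph{nonzero} multiples of $E_{ij}$ for essentially every root vector in $N$, so $[L,L]$ typically equals $N$ itself (for instance $[L,L]=\s(E)$ when $L=\s(E\cup H)$), which strictly contains the support of $N^2$. The Observation is therefore only correct when read as: $N^2$ represents $[N,N]$ and $N^{2^k}$ the $k$-th derived term of the nilpotent subalgebra $N$ --- which is your main reading, and is consistent with the paper, which only ever applies the Observation to purely nilpotent star matrices. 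You flag the ambiguity yourself, but the correct resolution is that the diagonal part must be excluded from the claim (or the derived series shifted by one step, since $L^{(1)}$ is generally all of $N$), not that it leaves the squaring unaffected.
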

\subsection{Lemmas on closure under Lie bracket:}
\begin{lemma}
(Nilpotent Element Removal) Fix $i < j$. Let $L$ be a upper-triangular regular subalgebra with the standard basis, without $E_{ij}$ as a basis element. Then for $k \geq i + 1$ and $k \leq j - 1$, $E_{ik}$ is not a basis element in the subalgebra or $E_{kj}$ is not a basis element in the subalgebra. \\
Representing $A$ as a star matrix, this means that the ${ik}^{th}$ entry is 0 or the ${kj}^{th}$ entry is 0, i.e., each $A_{ik}A_{kj} = 0$. That is either an element to the left or the corresponding element below has to be 0. Then, $dim L \leq \frac{n(n+1)}{2} - j + i$. If $E_{ij}$ is not a basis element in a nilpotent subalgebra, $N$ then $dim N \leq \frac{n(n-1)}{2} - j + i$. This is a tight bound.
\label{lemma 1}
\end{lemma}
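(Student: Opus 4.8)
The plan is to split the statement into three pieces: the pointwise closure condition $A_{ik}A_{kj}=0$, the dimension bounds that follow from it, and the tightness of those bounds. Everything rests on one bracket computation. Fix $k$ with $i<k<j$ and apply the bracket relation \eqref{Lie 1} to the pair $(E_{ik},E_{kj})$, where the matching inner indices both equal $k$:
\[
[E_{ik},E_{kj}] = \delta_{kk}E_{ij} - \delta_{ji}E_{kk} = E_{ij},
\]
the second term vanishing because $i<j$ forces $\delta_{ji}=0$. Since $L$ is a subalgebra it is closed under the bracket, so if both $E_{ik}$ and $E_{kj}$ were basis elements of $L$ then $E_{ij}=[E_{ik},E_{kj}]\in L$, contradicting the hypothesis that $E_{ij}$ is absent. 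Hence for each such $k$ at most one of $E_{ik},E_{kj}$ lies in $L$; in star-matrix terms at least one of the entries $A_{ik},A_{kj}$ is $0$, i.e. $A_{ik}A_{kj}=0$.

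For the dimension bound I would count how many of the $\tfrac{n(n-1)}{2}$ nilpotent positions are forced to be empty. Step one already shows $E_{ij}$ is empty, and for each of the $j-i-1$ interior indices $k\in\{i+1,\dots,j-1\}$ at least one member of the pair $\{E_{ik},E_{kj}\}$ is empty. I would first confirm these forced-empty positions are genuinely distinct: the row-$i$ entries $E_{ik}$ occupy columns $i+1,\dots,j-1$, the column-$j$ entries $E_{kj}$ occupy rows $i+1,\dots,j-1$, and $E_{ij}$ lies in neither range, so nothing is double-counted. This yields at least $1+(j-i-1)=j-i$ empty off-diagonal positions. Subtracting from the full upper-triangular dimension gives $\dim N\le\tfrac{n(n-1)}{2}-(j-i)=\tfrac{n(n-1)}{2}-j+i$ in the purely nilpotent case, and, adjoining the diagonal part, the solvable bound $\dim L\le\tfrac{n(n+1)}{2}-(j-i)$.

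For tightness I would exhibit a nilpotent subalgebra meeting the bound, the natural candidate being
\[
N_0=\operatorname{span}\bigl(\{E_{ab}:a<b\}\setminus\{E_{aj}:i\le a\le j-1\}\bigr),
\]
obtained by deleting exactly the $j-i$ entries of column $j$ in rows $i$ through $j-1$. By construction $\dim N_0=\tfrac{n(n-1)}{2}-(j-i)$, $E_{ij}\notin N_0$, and each interior pair loses its member $E_{kj}$, so the lemma's hypotheses hold. The one thing needing real verification is closure: a deleted element $E_{aj}$ could only be produced by a bracket of surviving elements. Via the first bracket term it would arise as $[E_{ab},E_{bj}]=E_{aj}$ with $a<b<j$, but then $b\in\{i+1,\dots,j-1\}$, so $E_{bj}$ is itself deleted; the second term $-\delta_{da}E_{cb}$ could yield a deleted $E_{cj}$ only if its factor $E_{aj}$ survived, i.e. $a<i$, which forces $c<i$ and hence $E_{cj}$ undeleted. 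Thus $N_0$ is bracket-closed, and adjoining the full diagonal realizes the solvable bound.

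The main obstacle is exactly this closure check in the tightness step: the pointwise condition from step one is necessary but not obviously sufficient for a whole family of deleted entries to span a subalgebra, so the real work is seeing that deleting an \emph{entire} column of length $j-i$ (rather than an ad hoc choice of one element per pair) is what preserves closure. A secondary point to state carefully is the diagonal count in the solvable bound, since the Cartan subalgebra of $\mathfrak{sl}(n,\C)$ is only $(n-1)$-dimensional, so the full Borel has dimension $\tfrac{n(n-1)}{2}+(n-1)$; I would reconcile this with the $\tfrac{n(n+1)}{2}$ appearing in the statement when writing up the solvable case.
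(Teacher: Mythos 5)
Your proposal is correct and follows essentially the same route as the paper: the bracket identity $[E_{ik},E_{kj}]=E_{ij}$ forces one element of each pair out of $L$, counting the $j-i$ distinct forced-empty positions gives the dimension bounds, and tightness is realized by deleting the full column segment $\{E_{aj}: i\le a\le j-1\}$ --- the exact mirror of the paper's row-segment construction $\{E_{ik}: i+1\le k\le j\}$, which the paper itself notes works ``or similarly'' with the $E_{kj}$'s (your closure verification is in fact more careful than the paper's rather terse one). Your closing worry about the diagonal count is well founded: since $|H|=n-1$ in $\mathfrak{sl}(n,\C)$, one has $|H|+|E|=\frac{n(n+1)}{2}-1$, so the paper's solvable bound $\dim L\le \frac{n(n+1)}{2}-j+i$ remains valid but is off by one from sharp --- the construction actually achieves $\dim L=\frac{n(n+1)}{2}-1-(j-i)$, and only the nilpotent bound is tight exactly as stated.
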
 
\begin{proof}
Suppose $[E_{mn}, E_{kl}] = E_{ij}$. This implies that $E_{ij} = E_{ml}$, and so $i = m$ and $l = j$ where $n = k$. Thus, $E_{ij} = [E_{ik}, E_{kj}]$ where $k \in \{1, ..., n\}$. As this is a regular subalgebra, $k \leq j - 1$ and $k \geq i + 1$. As $L$ is a subalgebra, one of the elements in these pairs is not in the Lie algebra, otherwise $L$ would not be closed under the Lie bracket. This is precisely our condition. Therefore, at least $j - i -1$ elements from $|H|$ and $|E|$ are not in the subalgebra. Since $|H| + |E| = \frac{n(n+1)}{2} $, $dim L \leq \frac{n(n+1)}{2} - j + i$. As $|E| = \frac{n(n-1)}{2} $ if $E_{ij}$ is not a basis element in a nilpotent subalgebra, $N$ then $\dim N \leq \frac{n(n-1)}{2} - j + i$. 
This is the lowest upper bound. If we remove all of the $E_{ik}$ for $k \leq j-1$ and $k \geq i+1$ (or similarly, all the $E_{kj}$) from the set of $H \cup E$, i.e., let $L = \s{E \cup H \setminus \{E_{ik}: k \in \{i + 1,, ..., j\}\}}$ then L is a subalgebra. For any $E_{ik}$ for $k \in \{i+1, ..., j\}$ none of the $E_{ik}$ are in the subalgebra, and so for $I_k = \{i+1, ...,k\}$, none of the $E_{il}$ for $l \in I_k$ are in the subalgebra, and so $\dim L = \frac{n(n+1)}{2} - j + i$. Therefore the subalgebra is closed under the Lie bracket and the bound is tight.
\end{proof}
\begin{corollary}
(Removal of Off-Diagonal) Let $M$ be a subalgebra. If we remove the basis element $E_{i,i+1}$, call the resulting subalgebra $P$. Then $\dim P = \dim M - 1$. 
\end{corollary}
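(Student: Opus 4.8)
The plan is to recognize that the stated dimension equality is really a closure statement in disguise. The set $B'$ obtained by deleting $E_{i,i+1}$ from the standard basis $B$ of $M$ is automatically linearly independent and has exactly $|B|-1$ elements, so the only thing that needs checking is that $P = \s B'$ is again closed under the Lie bracket. Once closure is established, $\dim P = \dim M - 1$ follows immediately from the fact that we removed a single basis vector. I would therefore frame the corollary as the special case $j = i+1$ of Lemma \ref{lemma 1}.

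First I would record the key structural fact that $E_{i,i+1}$ cannot be produced as a bracket of two nilpotent basis elements of $P$. By equation (\ref{Lie 1}), a bracket $[E_{ab},E_{cd}]$ contributes a multiple of $E_{i,i+1}$ only if $(a,b,c,d)$ makes one of the two Kronecker terms equal $E_{i,i+1}$; unwinding either case, together with the upper-triangularity constraints, forces the existence of an integer $k$ with $i < k < i+1$, which is impossible. Equivalently, in the language of Lemma \ref{lemma 1}, the relevant index range $\{k : i+1 \le k \le j-1\}$ is empty when $j = i+1$, so no companion elements must be deleted alongside $E_{i,i+1}$ to preserve closure.

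Next I would dispose of the brackets involving the diagonal part. A direct computation from the second Lie-bracket identity shows that $[E_{ab},H_k]$ is always a scalar multiple of $E_{ab}$ itself (each of the four Kronecker terms, when nonzero, reproduces $E_{ab}$), and $[H_i,H_k]=0$. Hence bracketing with a diagonal generator never converts an element $E_{ab}$ with $(a,b)\neq(i,i+1)$ into $E_{i,i+1}$. Combining this with the previous paragraph, every bracket of two basis elements of $B'$, expressed in the standard basis $E\cup H$, has zero $E_{i,i+1}$-coefficient. Since such a bracket also lies in $M$ (as $B' \subset B$ and $M$ is a subalgebra), it lies in $\s(B \setminus \{E_{i,i+1}\}) = P$, which gives closure.

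I expect the only real subtlety to be bookkeeping rather than genuine difficulty: one must be careful that the conclusion concerns the \emph{coefficient} of $E_{i,i+1}$ in the standard basis vanishing, not merely that $E_{i,i+1}$ fails to appear as a single summand, and that the argument is phrased for arbitrary linear combinations (by bilinearity it suffices to check brackets of basis pairs). With that care, closure — and hence the dimension count — is routine.
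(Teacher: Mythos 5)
Your proof is correct and follows the paper's intended route: the paper states this corollary without proof, as the special case $j = i+1$ of Lemma \ref{lemma 1}, where the intermediate index range $\{k : i+1 \le k \le j-1\}$ is empty so no further deletions are forced. You go slightly beyond the paper by explicitly verifying closure of $P$ under the bracket (including the $[E_{ab},H_k]$ computation and the coefficient-versus-summand caveat), which is a worthwhile completion since Lemma \ref{lemma 1} as stated gives only a necessary condition for removal.
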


\begin{lemma}(Removal or Addition of $H_i$) Let $L$ be a subalgebra with standard basis. Let $P$ be the set of basis vectors. Then,
$L_R = \s{P \setminus \{H_i\}}$ and $L_W = \s{P \cup \{H_i\}}$ are both subalgebras. \label{remove h_i}
\end{lemma}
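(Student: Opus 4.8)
The plan is to reduce everything to two structural facts about the adjoint action of a diagonal matrix, after which closure of both $L_R$ and $L_W$ follows by a short case analysis on brackets of basis elements together with bilinearity of $[\cdot,\cdot]$.

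First I would record the eigenvector property of the Cartan action: for any diagonal matrix $H=\operatorname{diag}(h_1,\dots,h_n)$ and any matrix unit $E_{ab}$ one has $[H,E_{ab}]=(h_a-h_b)E_{ab}$. Specializing to $H_i=E_{ii}-E_{i+1,i+1}$ shows that $[H_i,E_{ab}]$ is always a \emph{scalar multiple} of $E_{ab}$, hence lies in $\s\{E_{ab}\}$; this is exactly the bracket relation for $[E_{ij},H_k]$ listed above, read in root-space form. Second, I would note that the bracket of two upper-triangular basis nilpotents stays strictly upper triangular: by Equation \eqref{Lie 1}, $[E_{ab},E_{cd}]=\delta_{bc}E_{ad}-\delta_{da}E_{cb}$, and since $a<b$ and $c<d$, neither surviving term can be diagonal (e.g. $\delta_{bc}=1$ forces $a<b=c<d$, so $a\ne d$). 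Consequently \emph{no} bracket of basis elements of $L$ ever has a nonzero $H$-component, so the commutator of $L$ lives entirely in the nilpotent part $\s(E\cap P)$.

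With these in hand I would prove that $L_R=\s(P\setminus\{H_i\})$ is closed by cases on brackets of its basis elements. A bracket of two $E$'s lands in $\s(E\cap P)\subseteq L_R$ by the second fact; a bracket $[E_{ab},H_k]$ with $k\ne i$ is a scalar multiple of $E_{ab}\in P\setminus\{H_i\}$ by the first fact; and $[H_k,H_l]=0$. Since $L_R\subseteq L$, each such bracket already lies in $L$, and the only thing to check is that its expansion in the basis $P$ never involves the removed element $H_i$ — which holds because every output above is either off-diagonal or a multiple of a surviving $E$. Bilinearity then gives closure on all of $L_R$.

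For $L_W=\s(P\cup\{H_i\})$ I would argue similarly, the only genuinely new brackets being those involving $H_i$ (the case $H_i\in P$ is trivial). Brackets among elements of $P$ land in $L\subseteq L_W$; $[H_i,H_k]=0$ for $H_k\in P$; and the crucial case $[H_i,E_{ab}]$ with $E_{ab}\in P$ is, by the first fact, a scalar multiple of $E_{ab}$, hence already in $P\subseteq L_W$. This last case is where I expect the only real subtlety to sit: closure under adding $H_i$ works \emph{because} $\operatorname{ad}(H_i)$ preserves each root line $\s\{E_{ab}\}$ rather than mixing $E_{ab}$ into some $E_{cd}\notin P$. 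Were the $E_{ab}$ not eigenvectors of $\operatorname{ad}(H_i)$, adjoining a Cartan element could force new off-diagonal generators and destroy the subalgebra property; the diagonal form of $H_i$ is precisely what rules this out. Invoking bilinearity once more completes the argument.
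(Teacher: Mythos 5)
Your proof is correct, and it is worth noting that the paper itself states Lemma~\ref{remove h_i} with no proof at all, so there is no argument in the source to compare against; your write-up supplies exactly the justification the paper leaves implicit. The two structural facts you isolate are the right ones: $\operatorname{ad}(H)$ for diagonal $H$ acts by the scalar $h_a-h_b$ on each line $\s\{E_{ab}\}$, and the bracket of two strictly upper-triangular matrix units has no diagonal component. You also handle the one genuinely delicate step correctly: for $L_R$, the bracket of two $E$'s in $P\setminus\{H_i\}$ need not be a multiple of a single basis element of $P$ a priori, so you rightly combine closure of $L$ with the uniqueness of the expansion in the linearly independent set $E\cup H$ to conclude that the $H_i$-coefficient (indeed every $H$-coefficient) vanishes, placing the bracket in $\s(E\cap P)\subseteq L_R$. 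The $L_W$ case is likewise complete, since the only new brackets involve $H_i$ and these either vanish or preserve each root line. Your closing remark correctly identifies why adjoining a Cartan element is harmless here --- $\operatorname{ad}(H_i)$ never mixes a surviving $E_{ab}$ into an absent $E_{cd}$ --- which is the same eigenvector principle the paper exploits later when computing the adjoint actions $\adj(H_{pq})(\s(E))$ in Section~3, so your argument is consistent in spirit with the paper's overall method.
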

  
\subsection{Lemmas on Invariance}
Let $L$ and $M$ be regular upper-triangular sub-algebras of $\mathfrak{sl}(n,\C)$ spanned by $E$ and $H$,  which are conjugate by $g \in SL(n,\C).$ 
\begin{lemma} (Nilpotent Invariance) Nilpotent elements conjugate to nilpotent elements.
\label{nilpotent conjugacy}
\end{lemma}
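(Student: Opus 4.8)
The plan is to reduce the statement to the elementary algebraic fact that matrix nilpotency is preserved under conjugation. Write the conjugacy as $M = gLg^{-1}$ with $g \in SL(n,\C)$, and let $x \in L$ be nilpotent, i.e. $x^k = 0$ for some $k \geq 1$. First I would compute directly that $(gxg^{-1})^k = g\,x^k\,g^{-1} = 0$, since all interior factors $g^{-1}g$ telescope to the identity. Hence the image $gxg^{-1} \in M$ is again nilpotent, which is exactly the assertion. Note that the determinant-one condition on $g$ plays no role here; only invertibility is needed, so the argument is really a statement about change of basis.

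The one thing to be careful about is which notion of ``nilpotent element'' is intended, so that the lemma is meaningful for a classification up to conjugacy. I would emphasize that for $x \in \mathfrak{sl}(n,\C)$ the property of being a nilpotent matrix is intrinsic to $x$ as a linear operator on $\C^n$, hence independent of the chosen basis; conjugation by $g$ is precisely a change of basis, so it can neither create nor destroy nilpotency. For completeness I would connect this to the abstract Lie-theoretic notion via the Jordan decomposition $x = x_s + x_n$ into semisimple and nilpotent parts: $x$ is nilpotent exactly when $x_s = 0$, and since conjugation is a Lie algebra isomorphism it carries this decomposition to $gxg^{-1} = gx_sg^{-1} + gx_ng^{-1}$, preserving the vanishing of the semisimple part. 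This shows the operator-theoretic and the intrinsic notions agree and are both conjugation-stable.

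The step that carries the most conceptual weight, though not the computational weight, is observing that the generators $E_{ij}$ of the nilpotent part of $L$ need \emph{not} remain strictly upper-triangular after conjugation by $g$, yet they must remain nilpotent. The entire content of the lemma is that nilpotency, unlike upper-triangularity, is a conjugation invariant, and the short telescoping identity makes this precise. I do not expect a genuine obstacle beyond pinning down the definition; once the notion of nilpotency is fixed the result is immediate, and it is this invariance that later lets us match the nilpotent basis of $L$ against that of $M$ when reducing the classification to its combinatorial core.
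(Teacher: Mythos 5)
Your proof is correct, and in fact the paper states this lemma with no proof at all, treating it as a standard fact; your telescoping computation $(gxg^{-1})^k = g\,x^k\,g^{-1} = 0$ is exactly the canonical argument that fills that gap, and your remark that only invertibility of $g$ matters (not $\det g = 1$) is accurate. The Jordan-decomposition digression is harmless but unnecessary here, since the paper only ever uses the matrix-level notion of nilpotency, and your closing observation --- that upper-triangularity is \emph{not} preserved under conjugation while nilpotency is --- is precisely the point that makes the lemma useful in the later classification arguments (e.g.\ Corollary \ref{nilpotent dimension} and Theorem \ref{N_i theorem}).
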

\begin{corollary}
    For $E_{ij} \in L$ then $g$ conjugates it to a linear combination of $E_{kl} \in M$.
    \label{nilpotent algebras conjugate to nilpotent algebras}
\end{corollary}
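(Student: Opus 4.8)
The plan is to combine the membership $gE_{ij}g^{-1} \in M$ (immediate from $gLg^{-1} = M$ and $E_{ij} \in L$) with the nilpotency of $E_{ij}$ in order to rule out any diagonal component of the image. The whole content is separating the nilpotent part of an element of $M$ from its diagonal part.

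First I would set $X := gE_{ij}g^{-1}$ and observe that since $E_{ij} \in L$ and $g$ conjugates $L$ to $M$, we have $X \in M$. Because $M$ is spanned by $E$ and $H$, I can write
$$X = \sum_{k<l} c_{kl}E_{kl} + \sum_{k} d_k H_k,$$
where the first sum ranges over the $E_{kl} \in M$, the second over the $H_k \in M$, and $c_{kl}, d_k \in \C$. The crucial structural point is that every element of $M$ is upper-triangular, since each $E_{kl}$ with $k<l$ is strictly upper-triangular and each $H_k = E_{kk}-E_{k+1,k+1}$ is diagonal; thus $X$ is upper-triangular with strictly-upper nilpotent part $\sum c_{kl}E_{kl}$ and diagonal part $D := \sum d_k H_k$. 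Note that this holds even though $g$ itself need not be upper-triangular — upper-triangularity of $X$ comes from $X \in M$, not from $g$.

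Next I would invoke Lemma \ref{nilpotent conjugacy} (Nilpotent Invariance): since $E_{ij}$ is nilpotent, so is $X = gE_{ij}g^{-1}$. For an upper-triangular matrix the diagonal entries are precisely the eigenvalues, so a nilpotent upper-triangular matrix must have all diagonal entries equal to zero. Hence the diagonal part $D$ of $X$ vanishes as a matrix, and since the $H_k$ are linearly independent, $D = 0$ forces every $d_k = 0$. Consequently $X = \sum_{k<l} c_{kl}E_{kl}$ is a linear combination of the nilpotent basis elements $E_{kl}$ of $M$, which is the assertion.

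I do not expect a genuine obstacle; the corollary is essentially bookkeeping built on top of the invariance lemma. The one step needing care is the vanishing of the diagonal component, which relies on two facts working together: the upper-triangular structure of $M$ (so eigenvalues can be read off the diagonal) and the linear independence of the $H_k$. Without the upper-triangular hypothesis on $M$ one could not read the eigenvalues off the diagonal, and the clean split between the $E_{kl}$-part and the $H_k$-part would fail.
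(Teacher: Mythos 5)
Your proof is correct and follows exactly the route the paper intends: the paper states this corollary without proof as an immediate consequence of Lemma \ref{nilpotent conjugacy}, and your argument (conjugate lands in $M$, nilpotency plus upper-triangularity of $M$ kills the diagonal component, linear independence of the $H_k$ kills the coefficients) is precisely the bookkeeping the paper leaves implicit. No gaps; the one step needing care --- reading eigenvalues off the diagonal of an upper-triangular matrix --- is handled correctly.
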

\begin{corollary}
    Two conjugate subalgebras have the same dimension for the maximal nilpotent subalgebra.
    \label{nilpotent dimension}
\end{corollary}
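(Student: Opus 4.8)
The plan is to show that conjugation by $g$ restricts to a linear bijection between the maximal nilpotent subalgebras of $L$ and $M$, from which the equality of dimensions is immediate. Write $\phi\colon L \to M$ for the map $X \mapsto gXg^{-1}$; since $g \in SL(n,\C)$ is invertible this is a linear bijection (in fact a Lie algebra isomorphism, though only linearity and bijectivity are needed for a dimension count). Let $N_L = \s{\{E_{ij} : E_{ij} \in L\}}$ and $N_M = \s{\{E_{kl} : E_{kl} \in M\}}$ denote the maximal nilpotent subalgebras, as in Observation \ref{multiplication by star matrix}.

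First I would establish the forward inclusion $\phi(N_L) \subseteq N_M$. By Corollary \ref{nilpotent algebras conjugate to nilpotent algebras}, for each basis element $E_{ij} \in L$ the image $\phi(E_{ij})$ is a linear combination of the $E_{kl} \in M$, and hence lies in $N_M$. Extending by linearity over the spanning set of $N_L$ gives $\phi(N_L) \subseteq N_M$, and since $\phi$ is injective this already yields $\dim N_L \le \dim N_M$.

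The reverse inequality follows by symmetry: $M$ and $L$ are conjugate via $g^{-1}$, which satisfies the same hypotheses, so applying Corollary \ref{nilpotent algebras conjugate to nilpotent algebras} with the roles of $L$ and $M$ exchanged gives $\phi^{-1}(N_M) \subseteq N_L$, whence $\dim N_M \le \dim N_L$. Combining the two inclusions shows that $\phi$ restricts to a bijection $N_L \to N_M$, so $\dim N_L = \dim N_M$, as claimed.

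The only delicate point worth spelling out is that the image $\phi(E_{ij})$, which is nilpotent by Lemma \ref{nilpotent conjugacy}, genuinely has no diagonal ($H$) component: a nonzero linear combination of the diagonal matrices $H_i$ is itself diagonal and not nilpotent, so nilpotency forces the image into the $E$-span $N_M$ rather than merely into the full algebra $M$. This is exactly what Corollary \ref{nilpotent algebras conjugate to nilpotent algebras} records, so once that corollary is in hand the argument is routine; the main conceptual work has already been discharged in establishing that nilpotent elements conjugate to nilpotent elements.
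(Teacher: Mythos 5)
Your proof is correct and follows exactly the route the paper intends: the paper states this corollary without proof as an immediate consequence of Lemma \ref{nilpotent conjugacy} and Corollary \ref{nilpotent algebras conjugate to nilpotent algebras}, and you have simply spelled out the routine details (conjugation as a linear bijection, the inclusion $\phi(N_L) \subseteq N_M$, and the symmetry argument via $g^{-1}$). Your added remark that nilpotency of an upper-triangular element forces the diagonal part to vanish is a worthwhile clarification, but it introduces no new method.
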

\begin{definition}
    We define {nilpotent dimension} as the dimension of the maximal nilpotent subalgebra of a Lie algebra.
\end{definition}

\begin{lemma} {(Dimension of Commutator)} $[L, L] $ is conjugate to $[M, M]$ and so the dimension of the commutator is invariant under conjugation.
\label{dimension of commutator}
\end{lemma}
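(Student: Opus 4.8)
The plan is to exploit that conjugation by $g$ is not merely a linear isomorphism but a genuine Lie algebra automorphism, so that it automatically carries the derived subalgebra of $L$ onto the derived subalgebra of $M$. Write $\phi_g(X) = gXg^{-1}$ for the conjugation map on $\mathfrak{sl}(n,\C)$; it is a linear bijection with inverse $\phi_{g^{-1}}$, and the hypothesis that $L$ and $M$ are conjugate by $g$ is exactly the statement $\phi_g(L) = M$. First I would record the one-line identity
\begin{equation*}
[\phi_g(X), \phi_g(Y)] = gXg^{-1}\,gYg^{-1} - gYg^{-1}\,gXg^{-1} = g[X,Y]g^{-1} = \phi_g([X,Y]),
\end{equation*}
valid for all $X, Y \in \mathfrak{sl}(n,\C)$, which says that $\phi_g$ respects the bracket. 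Together with linearity this makes $\phi_g$ an automorphism of the ambient Lie algebra.

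Next I would push this through the definition $[L,L] = \s\{[X,Y] : X, Y \in L\}$. Because $\phi_g$ is linear, the image of a span is the span of the images, so
\begin{equation*}
\phi_g([L,L]) = \s\{\phi_g([X,Y]) : X, Y \in L\} = \s\{[\phi_g(X), \phi_g(Y)] : X, Y \in L\},
\end{equation*}
where the last equality uses the bracket identity above. As $X$ and $Y$ range over $L$, their images $\phi_g(X), \phi_g(Y)$ range over all of $M = \phi_g(L)$, so the spanning set on the right is precisely $\{[X', Y'] : X', Y' \in M\}$. Hence $\phi_g([L,L]) = [M,M]$, i.e. the two commutators are conjugate by the same $g$. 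Finally, since $\phi_g$ is an invertible linear map it preserves vector-space dimension, giving $\dim[L,L] = \dim[M,M]$.

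There is essentially no hard step here; the two points to state carefully are that $\phi_g$ is bracket-preserving (the displayed computation) and that passing $\phi_g$ through the span requires only its linearity together with the surjectivity of $\phi_g|_L$ onto $M$. I would emphasize that this argument is uniform over both the diagonal and the nilpotent contributions to the commutator, which is why it is cleaner than tracking individual basis brackets. It is also consistent with the star-matrix picture of Observation \ref{multiplication by star matrix}: for an upper-triangular $L$ every bracket is strictly upper-triangular, so $[L,L]$ is purely nilpotent and is represented by $N^2$, and the present lemma upgrades the nilpotent-dimension invariance of Corollary \ref{nilpotent dimension} to the full commutator.
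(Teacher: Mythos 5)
Your proof is correct; note that the paper states this lemma without providing any proof at all, so there is no written argument to diverge from. Your verification that $\phi_g(X) = gXg^{-1}$ is a bracket-preserving linear bijection with $\phi_g(L) = M$, hence $\phi_g([L,L]) = [M,M]$ with dimensions preserved, is precisely the standard justification the paper implicitly relies on (and which it reuses for Corollary \ref{dimension of derived series}), so it fills the omission cleanly.
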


\begin{corollary} (Derived Series under Conjugacy)
    The derived series of $L$ conjugates to the derived series of $M$. Thus, the dimension of the derived series is invariant under conjugation.
    \label{dimension of derived series}
\end{corollary}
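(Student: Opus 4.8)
The plan is to reduce the statement to the single observation that conjugation by a fixed $g \in SL(n,\C)$ is a Lie algebra isomorphism, and then to iterate Lemma \ref{dimension of commutator} along the derived series by induction. Write $\phi_g(X) = gXg^{-1}$ for the conjugation map, so that $M = \phi_g(L)$. The first step I would carry out is to verify that $\phi_g$ respects the bracket: since $\phi_g(XY) = gXYg^{-1} = (gXg^{-1})(gYg^{-1}) = \phi_g(X)\phi_g(Y)$, it follows immediately that $\phi_g([X,Y]) = [\phi_g(X), \phi_g(Y)]$. Hence $\phi_g$ sends any Lie subalgebra to a conjugate Lie subalgebra with the bracket structure intact.

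Next I would set up the induction on the index $k$ of the derived series, where $L^{(0)} = L$ and $L^{(k+1)} = [L^{(k)}, L^{(k)}]$. The base case $\phi_g(L^{(1)}) = M^{(1)}$, that is $\phi_g([L,L]) = [M,M]$, is exactly Lemma \ref{dimension of commutator}. For the inductive step, assume $\phi_g(L^{(k)}) = M^{(k)}$. Applying the bracket-preservation established in the first step,
\[
\phi_g(L^{(k+1)}) = \phi_g([L^{(k)}, L^{(k)}]) = [\phi_g(L^{(k)}), \phi_g(L^{(k)})] = [M^{(k)}, M^{(k)}] = M^{(k+1)},
\]
which closes the induction. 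Thus every term of the derived series of $L$ is carried to the corresponding term of the derived series of $M$ by the same conjugating element $g$.

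Finally, since $\phi_g$ is an invertible linear map with inverse $\phi_{g^{-1}}$, it restricts to a linear isomorphism $L^{(k)} \to M^{(k)}$ for each $k$, so $\dim L^{(k)} = \dim M^{(k)}$. This yields the invariance of the dimension of each stage of the derived series under conjugation.

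I expect the main obstacle to be conceptual rather than computational: the only real content is recognizing that the derived series is defined purely in terms of the bracket, so that any bracket-preserving isomorphism commutes with its formation. The one point I would be careful about is that I cannot simply reapply Lemma \ref{dimension of commutator} at each level, because the derived subalgebras $L^{(k)}$ need not themselves be regular upper-triangular subalgebras spanned by $E$ and $H$; instead the induction must rely on the general bracket-preservation of $\phi_g$ established at the outset, with Lemma \ref{dimension of commutator} serving only to anchor the base case.
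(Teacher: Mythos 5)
Your proof is correct, and it fills in exactly the argument the paper leaves implicit: the paper states this corollary without proof, treating it as immediate from Lemma \ref{dimension of commutator}, and your induction via the bracket-preservation of $\phi_g(X) = gXg^{-1}$ is the standard way to make that rigorous. Your closing caution---that one cannot simply re-invoke the lemma at each level since $L^{(k)}$ need not remain a regular upper-triangular subalgebra spanned by $E$ and $H$---is a genuine point of care that the paper glosses over.
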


\begin{definition}
    The max rank of a Lie algebra is the maximum rank (the column/row rank of a matrix) that an element in the algebra can obtain. Similarly, the min rank is the minimum rank that a non-zero element of the algebra can obtain.
\end{definition}
\begin{lemma}
    The min and max rank are invariant under conjugation.
    \label{min/max rank invariance}
\end{lemma}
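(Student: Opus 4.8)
The plan is to reduce this to the elementary linear-algebra fact that rank is unchanged by multiplication by invertible matrices. Recall that $L$ and $M$ are conjugate, so there is a fixed $g \in SL(n,\C)$ with $M = gLg^{-1}$, and the map $\phi\colon L \to M$ defined by $\phi(X) = gXg^{-1}$ is a linear bijection. First I would record the underlying fact: for any $X \in \mathfrak{sl}(n,\C)$ and any invertible $g$, we have $\range(gXg^{-1}) = g\cdot\range(X)$ and $\ker(gXg^{-1}) = g\cdot\ker(X)$, so left- and right-multiplication by the invertible matrices $g$ and $g^{-1}$ carry subspaces to subspaces of equal dimension. Hence $\operatorname{rank}(gXg^{-1}) = \operatorname{rank}(X)$ for every $X$. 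Note that only the invertibility of $g$ is used here; the determinant-one condition from $SL(n,\C)$ is not needed, though it is of course available.

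With that in hand, the key step is to observe that $\phi$ is a rank-preserving bijection from $L$ onto $M$. Explicitly, for each $X \in L$ the element $\phi(X) \in M$ satisfies $\operatorname{rank}(\phi(X)) = \operatorname{rank}(X)$, and conversely every $Y \in M$ is $\phi(g^{-1}Yg)$ with $g^{-1}Yg \in L$ of the same rank. Therefore the set of ranks attained by elements of $L$ and the set of ranks attained by elements of $M$ are literally equal as subsets of $\{0,1,\dots,n\}$. Taking the maximum of these two equal sets gives that the max rank of $L$ equals the max rank of $M$, which establishes invariance of the max rank.

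For the min rank the only extra care needed is the nonzero qualifier in the definition. Since $g$ is invertible, $\phi$ sends nonzero elements to nonzero elements and zero to zero, so $\phi$ restricts to a bijection between the nonzero elements of $L$ and the nonzero elements of $M$, again preserving rank. Consequently the sets of ranks attained by \emph{nonzero} elements coincide, and taking minima yields equality of the min ranks. I expect no genuine obstacle here: the entire content is the conjugation-invariance of rank, and the only thing to be careful about is treating the min rank over the nonzero elements (handled by invertibility of $\phi$) rather than over all of $L$, where the minimum would trivially be $0$.
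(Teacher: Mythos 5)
Your proof is correct: the identities $\range(gXg^{-1}) = g\cdot\range(X)$ and $\ker(gXg^{-1}) = g\cdot\ker(X)$ give rank-preservation, and your observation that $\phi(X)=gXg^{-1}$ is a bijection sending nonzero elements to nonzero elements is exactly what is needed so that the sets of attained ranks (and of ranks attained by nonzero elements) coincide, making both the max and the min agree. The paper in fact states this lemma without any proof, so there is nothing to diverge from; your argument is the standard one the paper implicitly relies on, and your care with the nonzero qualifier in the min-rank case is the only subtlety and you handle it correctly.
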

\begin{lemma}(Left action on column vectors)
\label{acting on column vectors lemma} Two conjugate subalgebras $L,M$ send $\mathbb{C}^n$ to subspaces of the same dimension. That is, when a star matrix acts on a column vector with entries in $\mathbb{C}$, the number of non-zero entries in $Av$ is the same as the number of non-zero entries in $Bv$ where $A$ and $B$ are the star matrices for $L,M$ respectively.
\end{lemma}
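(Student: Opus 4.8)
The plan is to separate the statement into two independent pieces: the conjugation invariance of the image subspace under the left action, and a combinatorial reading of the dimension of that image directly off the star matrix. Write $W_L=\s\{Xv: X\in L,\ v\in\C^n\}$ for the linear span of all images of column vectors under the left action of $L$, and define $W_M$ analogously; these are subspaces of $\C^n$, and the first assertion of the lemma is precisely $\dim W_L=\dim W_M$.

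First I would establish the invariance. Since $M=gLg^{-1}$, every $Y\in M$ is of the form $Y=gXg^{-1}$ with $X\in L$; for $w\in\C^n$ set $v=g^{-1}w$, so that $Yw=gXg^{-1}(gv)=g(Xv)$. Letting $X$ range over $L$ and $v$ over $\C^n$ yields $W_M=g\,W_L$, and since $g$ is a linear isomorphism of $\C^n$ it preserves dimension, giving $\dim W_M=\dim W_L$. This is the conceptual core of the lemma and is short.

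Next I would identify $\dim W_L$ with the star-matrix count. Let $A$ be the star matrix of $L$ and let $r_L$ be the number of rows of $A$ that contain at least one $\ast$. For a generic column vector $v$ the $i$-th entry of $Av$ is $\sum_j A_{ij}v_j$, which is not identically zero exactly when row $i$ of $A$ is nonzero; hence the number of nonzero entries of $Av$ equals $r_L$. I would then prove $W_L=\s\{e_i:\text{row }i\text{ of }A\text{ is nonzero}\}$, so that $\dim W_L=r_L$. The inclusion $\subseteq$ is immediate, since a row that vanishes in every element of $L$ forces $(Xv)_i=0$ for all $X,v$. For $\supseteq$, fix a nonzero row $i$: if some off-diagonal $E_{ij}\in L$ then $E_{ij}e_j=e_i\in W_L$, while if the nonzero entry is contributed by a diagonal element $D\in L$ with $D_{ii}\neq 0$ then $De_i=D_{ii}e_i$ again places $e_i$ in $W_L$. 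The same argument applied to $M$ gives $\dim W_M=r_M$.

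Combining the two pieces, $r_L=\dim W_L=\dim W_M=r_M$, which is exactly the claimed equality of the number of nonzero entries of $Av$ and $Bv$. I expect the only delicate point to be the $\supseteq$ direction of the star-matrix identification: because a diagonal basis element $H_k=E_{kk}-E_{k+1,k+1}$ touches two coordinates simultaneously, one must check that a nonzero diagonal row still contributes the single vector $e_i$ rather than only the combination $e_k-e_{k+1}$, which the evaluation $v=e_i$ settles. Once $W_M=gW_L$ is in hand, everything else is routine bookkeeping.
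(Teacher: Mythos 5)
Your proof is correct, but note that the paper itself supplies no proof of this lemma at all --- it is asserted bare, as are the neighboring invariance lemmas --- so there is nothing to match against; what you have done is fill in the argument the paper treats as self-evident from the star-matrix formalism. Your decomposition is the right one and makes precise exactly the two things the paper elides: (i) that conjugation by $g$ carries the image set of the left action of $L$ onto that of $M$, so the spans satisfy $W_M = gW_L$ and have equal dimension, and (ii) that this dimension is read off the star matrix as the number of nonzero rows, which is the ``number of non-zero entries in $Av$'' for generic $v$. Your handling of the diagonal subtlety is also sound, and it quietly uses the paper's standing hypothesis that the algebras are spanned by standard basis elements of $E \cup H$: a star at position $(i,j)$ then genuinely means $E_{ij} \in L$, and a nonzero diagonal functional at $(i,i)$ is realized by an honest diagonal element (a combination of $H_k$'s) in $L$, so evaluation at $e_i$ produces $e_i$ itself rather than only $e_k - e_{k+1}$. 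One small addendum worth making explicit if this were inserted into the paper: the lemma is later applied not to $L$ and $M$ themselves but to powers of their star matrices (the derived-series terms in Lemma \ref{Nilpotent inductive classification} and the codimension-2 classification), and your step (i) extends verbatim because $M = gLg^{-1}$ implies $M^k = gL^kg^{-1}$, after which step (ii) applies unchanged to the star matrix of each power.
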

\begin{lemma}(Right action on row vectors)
\label{acting on row vectors lemma} Two conjugate subalgebras $L,M$ have the same right action on row vectors. 
\end{lemma}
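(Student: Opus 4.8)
The plan is to mirror the proof of Lemma~\ref{acting on column vectors lemma}, replacing the left action on columns by the right action on rows, and to reduce the whole claim to a single conjugation-invariant quantity: the dimension of $\C^n L := \sum_{X \in L}\C^n X$, the sum of the row spaces of the elements of $L$. First I would make the statement precise. Writing $A$ for the star matrix of $L$, the $j$-th entry of a generic product $vA$ (with $v$ a row vector of generic scalars) is $\sum_i v_i A_{ij}$, which is nonzero exactly when column $j$ of $A$ carries a star; hence the number of nonzero entries of $vA$ equals the number of nonzero columns of $A$. Since the row space of $E_{ij}$ is $\operatorname{span}(e_j^{\mathsf T})$ and each diagonal basis element contributes the $e_k^{\mathsf T}$ for its nonzero positions, we get $\C^n L = \operatorname{span}\{e_j^{\mathsf T} : \text{column } j \text{ of } A \text{ carries a star}\}$, a subspace whose dimension is precisely that column count. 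So the lemma is equivalent to the statement $\dim(\C^n L) = \dim(\C^n M)$.

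The key step is then a direct conjugation computation. Since $M = gLg^{-1}$ for some $g \in SL(n,\C)$, every $X \in M$ has the form $gYg^{-1}$ with $Y \in L$, so for a row vector $w$ we have $wX = (wg)Yg^{-1}$. As $w$ ranges over $\C^n$, the vector $wg$ ranges over all of $\C^n$ because right multiplication by the invertible $g$ is a bijection; therefore $\C^n M = (\C^n L)g^{-1}$. Right multiplication by $g^{-1}$ is an invertible linear map and so preserves dimension, which gives $\dim(\C^n M) = \dim(\C^n L)$. Combined with the first paragraph, the numbers of nonzero columns of the two star matrices agree, i.e. $vA$ and $vB$ have the same number of nonzero entries for generic $v$.

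I would also note a conceptually cleaner route via transposition: the right action of $X$ on a row vector $w$ is the transpose of the left action of $X^{\mathsf T}$ on the column vector $w^{\mathsf T}$, and $M = gLg^{-1}$ yields $M^{\mathsf T} = (g^{\mathsf T})^{-1} L^{\mathsf T} g^{\mathsf T}$, so $L^{\mathsf T}$ and $M^{\mathsf T}$ are conjugate by $g^{\mathsf T} \in SL(n,\C)$ (as $\det g^{\mathsf T} = \det g = 1$). Applying the dimension argument of Lemma~\ref{acting on column vectors lemma} to this conjugate pair recovers the claim. The one point to watch — and the only place the argument could slip — is the identification of the combinatorial quantity ``number of nonzero columns of the star matrix'' with the genuinely conjugation-invariant object $\dim(\C^n L)$: this identification relies on both $L$ and $M$ being presented in the standard upper-triangular star form, which the hypotheses guarantee, and on $g$ acting on the correct side in $\C^n(gLg^{-1}) = (\C^n L)g^{-1}$. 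The transpose route needs only the extra bookkeeping check that $g^{\mathsf T}$ stays in $SL(n,\C)$ and that transposition converts conjugacy by $g$ into conjugacy by $g^{\mathsf T}$. Neither step is substantive, so I expect no real obstacle beyond keeping these identifications straight.
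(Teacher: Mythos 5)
Your proof is correct, and there is in fact nothing in the paper to compare it against line by line: the paper states Lemma~\ref{acting on row vectors lemma} (like its column-vector twin, Lemma~\ref{acting on column vectors lemma}) without any proof. Your argument is exactly the computation the paper deploys implicitly wherever it uses the lemma --- for instance in Section~3.1, where conjugacy of $A_{pq}$ and $A_{ab}$ is exploited via $\C^n P A_{pq}P^{-1} = \C^n A_{pq}P^{-1}$ together with $\C^n P = \C^n$ --- so your write-up is the paper's implicit approach made explicit, and generalized from a single adjoint-action matrix to the whole algebra via $\C^n L = \sum_{X \in L}\C^n X$ and $\C^n(gLg^{-1}) = (\C^n L)g^{-1}$. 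Your reduction also quietly repairs an imprecision in the paper's formulation: the literal claim that $vA$ and $vB$ have the same number of nonzero entries cannot hold for every $v$ (it fails for special choices of $v$), and the genuinely conjugation-invariant object is $\dim(\C^n L)$, which the paper itself later enshrines as the definition of row rank; your identification of that dimension with the count of starred columns, valid for generic $v$ because the $(j)$-th entry $\sum_i v_i A_{ij}$ vanishes identically only when column $j$ is unstarred, is exactly the right bridge, and each starred column genuinely contributes $e_j^{\mathsf T}$ to $\C^n L$ (via $e_i^{\mathsf T}E_{ij} = e_j^{\mathsf T}$, or $e_j^{\mathsf T}P = P_{jj}e_j^{\mathsf T}$ for diagonal $P$), so equality holds. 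The transpose route is also sound --- $(wX)^{\mathsf T} = X^{\mathsf T}w^{\mathsf T}$, $M^{\mathsf T} = (g^{\mathsf T})^{-1}L^{\mathsf T}g^{\mathsf T}$, and $\det g^{\mathsf T} = 1$ are all checked correctly --- with the one caveat you already flag: $L^{\mathsf T}$ is lower-triangular, so one must apply only the dimension-invariance part of the column-vector argument (which needs no triangularity), not the star-matrix bookkeeping.
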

\begin{definition}
    We define \emph{column rank} as the dimension of subspace that the left action of the Lie algebra sends $\C^n$ to. Similarly, we define \emph{row rank} as the dimension of subspace that the right action of the Lie algebra sends $\C^n$ to.  
\end{definition}
\begin{corollary}
    The column/row rank of the derived series are equal, i.e., the column/row ranks of ${L}^i$ is the same as the column/row ranks of ${m}^i$ (here $i$ refers to the $i^{th}$ term in the derived series.)
    \label{Derived Series Left and row rank}
\end{corollary}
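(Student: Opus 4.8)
The plan is to obtain this as a direct composition of two facts already in hand: that conjugacy passes to the derived series, and that column and row rank are conjugacy invariants. Fix an index $i$ and write $L^i, M^i$ for the $i$-th terms of the derived series of $L$ and $M$. Since $L$ and $M$ are conjugate by some $g \in \text{SL}(n,\C)$, Corollary~\ref{dimension of derived series} tells us that this same $g$ carries $L^i$ to $M^i$. Hence $(L^i, M^i)$ is again a conjugate pair of subalgebras, and the whole problem reduces to knowing that column and row rank are preserved under conjugation.

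The second step is to apply the invariance lemmas to this new conjugate pair. Recall that the column rank of a subalgebra $S$ is the dimension of $\sum_{A \in S} \range(A)$, the span of all images $Av$ for $A \in S$ and $v \in \C^n$. The key observation I would emphasize is that conjugation transforms each range by $\range(gAg^{-1}) = g\cdot\range(A)$: left-multiplying by the invertible $g^{-1}$ does not change the range of $A$, while left-multiplying by $g$ carries the image by the invertible linear map $g$. Consequently the entire sum of ranges is moved by $g$ to the corresponding sum for the conjugate algebra, and because $g$ is invertible the dimension is unchanged. This is exactly the content of Lemma~\ref{acting on column vectors lemma}, and the identical argument with right multiplication and row spaces gives Lemma~\ref{acting on row vectors lemma}. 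Applying both to $(L^i, M^i)$ yields equality of the column ranks and of the row ranks.

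I do not expect a genuine obstacle, since the statement is purely a chaining of prior results; the one point deserving care is to confirm that Lemmas~\ref{acting on column vectors lemma} and~\ref{acting on row vectors lemma} were proved for an \emph{arbitrary} conjugate pair and not merely for the fixed $L, M$, so that they may legitimately be reapplied once $L^i$ and $M^i$ are themselves known to be conjugate. Because the invariance argument above uses nothing beyond invertibility of the conjugating element, it transfers verbatim from $(L,M)$ to $(L^i, M^i)$. Letting $i$ range over all terms of the derived series then gives the corollary in full.
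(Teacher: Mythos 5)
Your proposal is correct and takes exactly the route the paper intends: the corollary is stated without a separate proof precisely because it is the immediate chaining of Corollary~\ref{dimension of derived series} (the derived series of $L$ conjugates, term by term, to that of $M$) with Lemmas~\ref{acting on column vectors lemma} and~\ref{acting on row vectors lemma} applied to the conjugate pair $(L^i, M^i)$, which is how you argue. Your explicit check that $\range(gAg^{-1}) = g \cdot \range(A)$, so that $\dim\bigl(\sum_{A \in S} \range(A)\bigr)$ is conjugation-invariant, merely fills in detail the paper leaves implicit (modulo one harmless slip of wording: it is \emph{right}-multiplication of $A$ by the invertible $g^{-1}$ that leaves $\range(A)$ unchanged).
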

\subsection{Lemmas on Conjugacy}
Let the conjugation matrix be $X = [x_{i}]_{i = 1}^{n + n(n-1)}$ with inverse $X^{-1} = [a_{ij}]_{n \times n} \implies XE_{kl}X^{-1} = [x_{k + (i-1)n}a_{lj}]_{i,j = 1}^{n}$. \\Similarly, $XH_{pq}X^{-1} = [x_{p + (i-1)n}a_{pj} - x_{q + (i-1)n}a_{qj}]_{i,j = 1}^n$
\begin{lemma}(Conjugacy for $E_{ij}$) 
For $E_{ij}$ to be strictly upper triangular, the last row must be 0, and if all the $a_{il}$ for $1 \leq l \leq n$ are 0, then the final matrix is 0. So, to conjugate $E_{ij}$ to be strictly upper triangular and non-zero, we need $x_{i + n(n-1)} = 0$.
\end{lemma}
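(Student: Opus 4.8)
The plan is to compute $X E_{ij} X^{-1}$ directly from the explicit conjugation formula stated just above the lemma, specialized to the element $E_{ij}$. Writing $p,q$ for the running row and column indices and using the row-major flattening $x_{i+(p-1)n}=X_{pi}$, the formula gives $(X E_{ij} X^{-1})_{pq}=x_{i+(p-1)n}\,a_{jq}=X_{pi}\,a_{jq}$. Equivalently, since $E_{ij}$ is the outer product of the $i$-th and $j$-th standard basis vectors, $X E_{ij} X^{-1}$ is the rank-one outer product of column $i$ of $X$ with row $j$ of $X^{-1}$. This single observation --- that every entry of the conjugate factors as (an entry of column $i$ of $X$) times (an entry of row $j$ of $X^{-1}$) --- is what the rest of the argument rests on, and it is immediate.

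Next I would invoke the definition of strict upper-triangularity: all entries on or below the main diagonal must vanish. Rather than track the whole diagonal, I would isolate the last row $p=n$, whose entries $(n,1),\dots,(n,n)$ all lie on or below the diagonal, so strict upper-triangularity forces $(X E_{ij} X^{-1})_{nq}=0$ for every $q$. By the factorization these entries equal $X_{ni}\,a_{jq}=x_{i+n(n-1)}\,a_{jq}$, a common scalar $x_{i+n(n-1)}$ multiplying the $j$-th row $(a_{j1},\dots,a_{jn})$ of $X^{-1}$.

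Hence the last row vanishes if and only if either the scalar $x_{i+n(n-1)}$ is zero or the entire $j$-th row of $X^{-1}$ is zero. The only subtlety is to discard the second alternative: since $X^{-1}$ is invertible it has no zero row, and in any case a zero $j$-th row would force the whole rank-one product $X E_{ij} X^{-1}$ to be the zero matrix, which is exactly the degenerate case flagged in the statement (``if all the $a_{jl}$ are $0$ then the final matrix is $0$''). Ruling this out leaves $x_{i+n(n-1)}=0$ as the only way to make the conjugate simultaneously strictly upper-triangular and nonzero, which is the claim.

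I expect no real obstacle here: the mathematical content is a one-line rank-one factorization together with the triangularity observation. The place to be careful is purely bookkeeping --- confirming that the row of the conjugate is governed by column $i$ of $X$ while its column is governed by row $j$ of $X^{-1}$, and that the flattened index $i+n(n-1)$ indeed selects the entry $X_{ni}$ --- plus the short appeal to invertibility of $X^{-1}$ that eliminates the degenerate alternative.
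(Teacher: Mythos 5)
Your proof is correct and takes essentially the same approach as the paper: both specialize the stated conjugation formula to $E_{ij}$, observe that the last row of $XE_{ij}X^{-1}$ is the common scalar $x_{i+n(n-1)}$ multiplying row $j$ of $X^{-1}$, and conclude that a nonzero strictly upper-triangular conjugate forces $x_{i+n(n-1)}=0$. Your write-up simply makes explicit what the paper's one-line proof leaves implicit --- the rank-one outer-product factorization and the appeal to invertibility of $X^{-1}$ to rule out a zero row (incidentally correcting the statement's index typo $a_{il}$, which should be $a_{jl}$).
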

\begin{proof}
 For the last row to be zero but the matrix non-zero, $x_{k + n(n_1)} = 0$.
\end{proof}
\begin{lemma}(Conjugacy of $H_{pq}$): Let $L, M$ be regular upper triangular sub-algebras spanned by $E$ and $H'$. Let $H_{an}$ be an element of $L$ and let an arbitrary number of $H_{pq}$'s be basis elements for $M$ for $q < n$. Suppose $L$ does not have any of these $H_{pq}'s$'s as basis elements and $M$ does not have $H_{an}$ as a basis element. Suppose $M$ also has $E_{pj}$ and $E_{q, k}$ as basis elements for some $j$ and $k$. Then $L$ and $M$ are not conjugate.
\label{conjugacy of h_pq}
\end{lemma}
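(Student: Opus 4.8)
The plan is to argue by contradiction: assume there is $X\in\mathrm{SL}(n,\C)$ with $XLX^{-1}=M$, and extract enough rigidity on $X$ to contradict $\det X\neq 0$. The conceptual engine is one structural observation about $M$. Since every basis $H_{pq}$ of $M$ has $q<n$, and hence $p<q<n$, each such $H_{pq}=E_{pp}-E_{qq}$ has a zero $(n,n)$ entry; and each basis $E_{ij}$ of $M$ is strictly upper triangular, so its last row vanishes. Therefore \emph{every} element of $M$ has zero last row. In the language of the Right-action Lemma, the covector $e_n^{\top}$ is a common left-annihilator of $M$, i.e. $\dim\{w:w^{\top}A=0\ \forall A\in M\}\geq 1$; and since this dimension equals $n$ minus the row rank, it is a conjugacy invariant. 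The whole argument is about showing that $L$, because it contains $H_{an}$, cannot match this feature once the root vectors $E_{pj},E_{qk}$ are in play.

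First I would push the zero-last-row property through the conjugation. Because $H_{an}\in L$, its image $XH_{an}X^{-1}$ lies in $M$ and so has zero last row. Writing $X^{-1}=[a_{ij}]$ and using the given formula, the last row of $XH_{an}X^{-1}=XE_{aa}X^{-1}-XE_{nn}X^{-1}$ equals $X_{na}\,(\text{row }a\text{ of }X^{-1})-X_{nn}\,(\text{row }n\text{ of }X^{-1})$. As rows $a$ and $n$ of the invertible matrix $X^{-1}$ are linearly independent, forcing this combination to vanish gives $X_{na}=0$ and $X_{nn}=0$. Running the same computation against all of $L$ (not just $H_{an}$) shows, more invariantly, that the $n$-th row of $X$, read as a covector $w^{\top}$, is a common left-annihilator of $L$; evaluating $w^{\top}H_{an}=w_a e_a^{\top}-w_n e_n^{\top}=0$ recovers $w_a=w_n=0$. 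Thus conjugacy forces $L$ to possess a nonzero common left-null covector that vanishes at coordinates $a$ and $n$.

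Next I would use $E_{pj},E_{qk}\in M$ to rigidify $X$ from the other side. Each conjugate $X^{-1}E_{pj}X$ lies in $L$ and, being nilpotent, is by the Nilpotent Invariance Lemma a combination of the strictly-upper-triangular $E_{kl}\in L$; in particular its last row vanishes. Expanding with the conjugation formula gives $a_{np}\,(\text{row }j\text{ of }X)=0$, whence $a_{np}=0$, and likewise $a_{nq}=0$ from $E_{qk}$. This produces a packet of simultaneous vanishing conditions — on the $n$-th row of $X$ at positions $a,n$, and on the $n$-th row of $X^{-1}$ at positions $p,q$ — with $H_{pq}$ binding the directions $p$ and $q$ together inside $M$. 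The goal is to feed these into the identities $XX^{-1}=I$ restricted to the rows and columns indexed by $n,p,q$ and conclude that the $n$-th row of $X$ cannot be completed to an invertible matrix; equivalently, that $L$ cannot supply a common left-null covector of the dimension conjugacy demands. That is the contradiction.

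The main obstacle is that the hypotheses pin down only part of $L$ and $M$, the remaining basis elements being arbitrary. Consequently a bare invariant count — row rank, or the multiplicity of the trivial composition factor of $\C^{n}$ — does not close, since the unspecified rows can always rebalance the tally. The genuine work is to show that the activation of coordinate $n$ for $L$ (the $-1$ that $H_{an}$ plants in the $(n,n)$ slot) together with the rigidity imposed by the root vectors $E_{pj},E_{qk}$ of $H_{pq}$ over-determines the $n$-th row of $X$: the role of $E_{pj}$ and $E_{qk}$ is exactly to kill the freedom that would otherwise let $X$ absorb the last-row discrepancy. I expect the cleanest route is the direct conjugation-formula computation above, isolating a single rank obstruction supported on the $\{p,q,n\}$ coordinates, rather than an appeal to an abstract module invariant.
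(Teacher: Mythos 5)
You derive exactly the right vanishing conditions --- your computation that $X^{-1}E_{pj}X\in L$ is nilpotent with zero last row, forcing $a_{np}=0$ and likewise $a_{nq}=0$, is precisely the paper's step $x_{p+n(n-1)}=x_{q+n(n-1)}=0$, and your observation that every element of $M$ has zero last row is the invariant form of the paper's setup. But your endgame has a genuine gap: you aim to contradict $\det X\neq 0$ by feeding the vanishing conditions into $XX^{-1}=I$ ``restricted to the rows and columns indexed by $n,p,q$,'' and no such rank obstruction exists. The conditions you have established --- $X_{na}=X_{nn}=0$ and $(X^{-1})_{np}=(X^{-1})_{nq}=0$ --- are simultaneously satisfiable by invertible matrices: for $n=4$, $a=1$, $(p,q)=(1,2)$, the permutation matrix $X$ sending $e_1\mapsto e_2$, $e_2\mapsto e_4$, $e_3\mapsto e_1$, $e_4\mapsto e_3$ has last row $(0,1,0,0)$ and $X^{-1}$ has last row $(0,0,1,0)$, meeting all four conditions. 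So ``the $n$-th row of $X$ cannot be completed to an invertible matrix'' is false as a consequence of what you have proved, and you correctly sense this when you admit the bare invariant count does not close; the trouble is that the route you then propose cannot close it either.

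The correct contradiction --- the one the paper uses --- is about membership, not invertibility, and it is two lines from where you stopped. Having forced $a_{np}=a_{nq}=0$, compute the last row of each conjugated diagonal element: $e_n^{\top}X^{-1}H_{pq}X = a_{np}(\text{row }p\text{ of }X)-a_{nq}(\text{row }q\text{ of }X)=0$. Every conjugated $E_{ij}\in M$ also has zero last row (it is a nilpotent element of $L$, hence strictly upper triangular, since the diagonal part of $L$ comes only from its $H'$ basis). As last-row vanishing is linear, every element of $X^{-1}MX=L$ has zero last row; but $H_{an}=E_{aa}-E_{nn}\in L$ has $(n,n)$ entry $-1$. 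That is the contradiction, and note it genuinely needs the hypothesis $E_{pj},E_{qk}\in M$ that your first two paragraphs set up: without those root vectors the lemma is false (e.g.\ $\operatorname{span}(H_{12})$ and $\operatorname{span}(H_{13})$ in $\mathfrak{sl}(3,\C)$ are conjugate by a signed transposition). Your covector reformulation ($w^{\top}=e_n^{\top}X$ annihilates $L$, giving $w_a=w_n=0$) is attractive and would finish instantly in the paper's applications where $L$ has full or near-full nilpotent basis (each $E_{ij}\in L$ kills $w_i$, forcing $w=0$), but under the lemma's loose hypotheses on $L$ it cannot substitute for the membership argument, since $L$ may well admit nonzero left-annihilating covectors supported away from $a$ and $n$.
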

\begin{proof}
We know that we can always conjugate the $E_{ij}$ to be strictly upper triangular as they are nilpotent, and thus only conjugate to nilpotent elements (thus, none of the $E_{ij}$ conjugate to $H_{n-1}$). As we have $E_{pj}$ and $E_{q,k}$ for some $j$ and $k$,as seen in the conjugation above, we need $x_{p + n(n-1)} = x_{q + n(n-1)} = 0$. Now, note for any $H_{pq}$, every term in the last row is of the form:
$x_{p + n(n-1)}a_{i,...} - x_{q + n(n-1)}a_{i+1,..}$.
However, $x_{p + n(n-1)} = x_{q +  n(n-1)} = 0$. Therefore, the last row for $H_{pq}$ is 0. As $H_{an} = E_{a,a} - E_{n,n}$ and no other basis element of a solvable subalgebra has the term $E_{n,n}$, no linear combination of $H_{pq}$'s in $L_2$ conjugates to $H_{an}$. Therefore, $L_1$ and $L_2$ are not conjugate. 
\end{proof}

\section{Adjoint Action of $H_i$}
We first discuss a few useful facts about elements $H_i$ can be conjugated to.
\begin{lemma}
    \label{conjugation of center} If $L$ and $M$ are two conjugate Lie algebras then the center of $L$ conjugates to the center of $M$.
\end{lemma}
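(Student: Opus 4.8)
The plan is to observe that conjugation by an element of $\text{SL}(n,\C)$ is a Lie algebra isomorphism, and that the center is defined purely in terms of the bracket, so any such isomorphism must carry the center of $L$ onto the center of $M$. Concretely, write $M = gLg^{-1}$ for the conjugating element $g \in \text{SL}(n,\C)$ and define $\phi : L \to M$ by $\phi(x) = gxg^{-1}$. First I would check that $\phi$ is a linear bijection — its two-sided inverse is conjugation by $g^{-1}$ — and that it respects the bracket. Since the bracket here is the matrix commutator, this is the one-line verification
\begin{equation*}
\phi([x,y]) = g(xy - yx)g^{-1} = (gxg^{-1})(gyg^{-1}) - (gyg^{-1})(gxg^{-1}) = [\phi(x), \phi(y)].
\end{equation*}

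Next, recalling that the center is $Z(L) = \{x \in L : [x,y] = 0 \text{ for all } y \in L\}$, I would take an arbitrary $x \in Z(L)$ and show $\phi(x) \in Z(M)$. Given any $m \in M$, surjectivity of $\phi$ supplies some $y \in L$ with $m = \phi(y)$, whence $[\phi(x), m] = [\phi(x), \phi(y)] = \phi([x,y]) = \phi(0) = 0$. Because $m$ was arbitrary, $\phi(x)$ commutes with all of $M$, so $\phi(x) \in Z(M)$; thus $\phi(Z(L)) \subseteq Z(M)$. Applying the identical argument to the inverse isomorphism $\phi^{-1}$ (conjugation by $g^{-1}$, which is again a bracket-preserving bijection) gives $\phi^{-1}(Z(M)) \subseteq Z(L)$, equivalently $Z(M) \subseteq \phi(Z(L))$. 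The two inclusions combine to $\phi(Z(L)) = Z(M)$, which is exactly the claim that $g$ conjugates the center of $L$ to the center of $M$.

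There is no substantial obstacle here; the statement is a formal consequence of the fact that conjugation is an isomorphism and the center is an isomorphism invariant. The only points that genuinely require attention are that the argument uses \emph{surjectivity} of $\phi$ (to conclude that $\phi(x)$ annihilates every element of $M$, not merely those in the image) and \emph{bijectivity} (to extract the reverse inclusion from $\phi^{-1}$); both are immediate since conjugation by $g^{-1}$ inverts $\phi$. It is worth noting that this lemma is the natural analogue, for the center, of Lemma~\ref{dimension of commutator} for the commutator, and the same bracket-preservation principle underlies both.
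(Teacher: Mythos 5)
Your proof is correct; note that the paper in fact states this lemma with no proof at all, treating it as standard. Your argument --- conjugation by $g$ is a bracket-preserving linear bijection $L \to M$, the center is an isomorphism invariant, and the reverse inclusion comes from applying the same reasoning to conjugation by $g^{-1}$ --- is precisely the standard justification the paper implicitly relies on (and the same principle underlying Lemma~\ref{dimension of commutator}), so there is nothing to correct and no divergence from the paper to report.
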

\begin{corollary}
    Let $L = \s(E \cup \{H_{i_1}, ..., H_{i_k}\})$ and $M$ be a conjugate algebra by the matrix $A$. Then $AH_{i_1}A^{-1} = \sum_{j=1}^{n-1}a_jH_j$.
    \label{full E, H center}
\end{corollary}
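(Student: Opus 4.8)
The plan is to first pin down the conjugator $A$ as tightly as the invariance lemmas allow, and only then to analyze the image $AH_{i_1}A^{-1}$ entry-by-structure. Because $L$ contains the \emph{entire} nilpotent basis $E$, its maximal nilpotent subalgebra is the full nilradical $\mathfrak n=\s(E)$, of dimension $\binom n2$. By Corollary \ref{nilpotent dimension} the maximal nilpotent subalgebra of $M$ has the same dimension; since $M$ is upper triangular and spanned by $E$ and $H$, its nilpotent part sits inside $\s(E)$, and equality of dimensions forces it to equal $\s(E)=\mathfrak n$. Hence both $L$ and $M$ contain all of $E$, and the two maximal nilpotent subalgebras coincide as the single subalgebra $\mathfrak n$.

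Next I would show $A$ normalizes $\mathfrak n$. By Lemma \ref{nilpotent conjugacy} and Corollary \ref{nilpotent algebras conjugate to nilpotent algebras}, $A$ carries the maximal nilpotent subalgebra of $L$ onto that of $M$; by the previous step both equal $\mathfrak n$, so $A\mathfrak n A^{-1}=\mathfrak n$. The standard flag $0\subset\langle e_1\rangle\subset\langle e_1,e_2\rangle\subset\cdots\subset\C^n$ is intrinsic to $\mathfrak n$: one has $\langle e_1\rangle=\bigcap_{X\in\mathfrak n}\ker X$ and $\langle e_1,\dots,e_{n-1}\rangle=\sum_{X\in\mathfrak n}\operatorname{im}X$, and the intermediate spaces follow by passing to the induced action on the quotient and inducting on $n$. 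Any $g$ with $g\mathfrak n g^{-1}=\mathfrak n$ must preserve this flag, so $A$ is upper triangular. Writing $AH_{i_1}A^{-1}=D+U$ with $D$ diagonal and $U$ strictly upper triangular, the diagonal of a product of upper-triangular matrices is the product of the diagonals, so $D_{kk}=A_{kk}(H_{i_1})_{kk}A_{kk}^{-1}=(H_{i_1})_{kk}$; that is, $D=H_{i_1}=\sum_{j=1}^{n-1}a_jH_j$ is already the asserted combination of the $H_j$.

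The entire content of the statement therefore collapses to the single claim $U=0$, and this is the step I expect to be the main obstacle. An upper-triangular conjugator does not by itself kill $U$, so the argument must feed in the full hypothesis that $M$ is in standard form — spanned by $E$ together with a set of diagonal $H_j$'s — and reconcile this with the fact that $AH_{i_1}A^{-1}$ is a prescribed element of $M$. The natural attack is to expand $AH_{i_1}A^{-1}$ in the standard basis of $M$ and test it against the bracket relations (\ref{Lie 1}): each relation $[\,AH_{i_1}A^{-1},E_{pq}\,]$ must land in $M$ and must match the action of a diagonal element weight-space by weight-space, together with the center and derived-series invariants of Lemma \ref{conjugation of center} and Corollary \ref{dimension of derived series}. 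The delicate point is controlling \emph{every} off-diagonal coefficient of $U$ simultaneously, rather than one root space at a time, since the $E_{pq}$-components interact under the bracket; making that simultaneous elimination airtight is where I expect essentially all the work of the proof to concentrate.
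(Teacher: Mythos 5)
Your first two steps are correct, and they are in fact considerably more detailed than the paper's own proof, which consists of a single observation: since $L$ has full nilpotent basis so does $M$, and hence (appealing to Lemma \ref{conjugation of center}) $C(M) \subset \s(H_1, \dots, H_{n-1})$. Your chain of deductions --- that the maximal nilpotent subalgebras of $L$ and $M$ both equal $\mathfrak{n} = \s(E)$, that therefore $A$ normalizes $\mathfrak{n}$, preserves the standard flag, and is upper triangular, and that consequently the diagonal part of $AH_{i_1}A^{-1}$ is exactly $H_{i_1}$ --- is sound, and that last conclusion is sharper than anything the paper establishes at this point (it anticipates Lemma \ref{conjugacy of H for full nilpotent basis}).

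The genuine gap is the step you explicitly defer: you plan to prove $U = 0$ by a simultaneous elimination of off-diagonal coefficients via the bracket relations, but that elimination can never close, because $U = 0$ is false for an arbitrary conjugator $A$. Take $A = I + E_{12} \in SL(n,\C)$, which is upper unipotent: a direct computation gives $AH_1A^{-1} = H_1 - 2E_{12}$, and since $\mathfrak{n} \subseteq L$ one checks $AE_{ij}A^{-1} \in \mathfrak{n}$ and $AH_kA^{-1} = H_k + ((H_k)_{22}-(H_k)_{11})E_{12} \in L$, so $ALA^{-1} = L$; thus with $M = L$ (which is in standard form) every hypothesis of the corollary holds while the conclusion fails. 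More generally, because $M$ contains all of $E$, conjugation by any upper-unipotent matrix fixes $M$ setwise while moving $H_{i_1}$ off the diagonal, so no conjugation-invariant quantity of the kind you invoke can force $U = 0$ for the given $A$. The statement is only true in the normalized sense in which the paper actually uses it (to justify testing only diagonal candidates $P \in \s(H_1,\dots,H_{n-1})$ in Lemma \ref{H_i exact conjugation}): either read the conclusion modulo $\mathfrak{n}$, or replace $A$ by $BA$ for a suitable upper-unipotent $B$ --- legitimate because such $B$ preserves $M$, and available because a semisimple element of the Borel subalgebra is conjugate to its diagonal part by an element of the unipotent radical. With that repair, your computation $D = H_{i_1}$ already finishes the corrected statement; as written, however, your plan commits you to proving a false identity, which is why the ``main obstacle'' you flag cannot be overcome.
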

\begin{proof}
    As the L has full nilpotent basis, so does $M$. Thus, $C(M) \subset \s(H_1,..., H_{n-1}\})$. 
\end{proof}
\begin{corollary}
    Let $L = \s(E -E_{m,m+1} \cup \{H_{i_1}, ..., H_{i_k}\})$ and $M$ be a conjugate algebra by the matrix $A$. Then $AH_{i_1}A^{-1} = \sum_{j=1}^{n-1}a_jH_j$.
    \label{one less than full E}
\end{corollary}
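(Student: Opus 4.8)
The plan is to follow the template of Corollary~\ref{full E, H center}, upgrading it to the case where the nilpotent basis is full except for a single adjacent generator $E_{m,m+1}$. The engine is the preservation of the Jordan decomposition under conjugation, combined with the conjugacy of maximal toral subalgebras inside the solvable algebra $M$. The first step is to pin down the shape of $M$. Since $L$ omits exactly one off-diagonal generator, its nilpotent dimension is $\frac{n(n-1)}{2}-1$; by Corollary~\ref{nilpotent dimension} the algebra $M$ has the same nilpotent dimension, and since the maximal nilpotent subalgebra of $M$ is spanned by standard $E_{kl}$'s and must be closed under the bracket, Lemma~\ref{lemma 1} forces it to equal $E\setminus\{E_{m',m'+1}\}$ for some $m'$. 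Thus $M=\mathfrak n_M\oplus\mathfrak h_M$ as a vector space, where $\mathfrak n_M=\s(E\setminus\{E_{m',m'+1}\})$ is the nilradical and $\mathfrak h_M=M\cap\s(H)$ is the diagonal part; because $M$ is spanned by standard basis elements, every $Y\in M$ splits as $Y=Y_{\mathrm{diag}}+Y_{\mathrm{nil}}$ with $Y_{\mathrm{diag}}\in\mathfrak h_M$ and $Y_{\mathrm{nil}}\in\mathfrak n_M$ separately.

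Next I would transport $H_{i_1}$ across the conjugacy. The element $H_{i_1}$ is diagonal, hence semisimple, and since $\mathrm{Ad}_A$ is a Lie-algebra isomorphism it preserves the abstract Jordan decomposition, so (in the same spirit as Lemma~\ref{nilpotent conjugacy} for the nilpotent part) $T:=AH_{i_1}A^{-1}$ is a \emph{semisimple} element of $M$. Writing $T=D+N$ with $D:=T_{\mathrm{diag}}\in\mathfrak h_M$ and $N:=T_{\mathrm{nil}}\in\mathfrak n_M$, the eigenvalues of $T$ coincide with those of $H_{i_1}$ (one $+1$, one $-1$, the rest $0$) and sit on the diagonal $D$; in particular $D$ is a diagonal, semisimple matrix with the same characteristic polynomial as $T$, hence conjugate to it. The assertion $AH_{i_1}A^{-1}=\sum_j a_jH_j$ is precisely the statement that the conjugating matrix may be chosen so that $N=0$, i.e. so that $H_{i_1}$ is carried all the way onto the standard Cartan $\s(H)$.

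The heart of the argument is to produce a unipotent $u\in\exp(\mathfrak n_M)\subseteq SL(n,\C)$ with $u D u^{-1}=T$. This is exactly the conjugacy of maximal toral subalgebras of the solvable algebra $M$: the semisimple $T$ lies in some maximal torus of $M$, $\mathfrak h_M$ is another maximal torus, and the two are related by an inner automorphism $\exp(\mathrm{ad}\,X)$ with $X\in\mathfrak n_M$. Here one uses the standard fact that a semisimple upper-triangular matrix is conjugate to its own diagonal by a unipotent upper-triangular matrix (conjugation by such a matrix fixes the diagonal, so the eigenvalue positions automatically match, and the only task is to clear the strictly-upper part $N$, which already lies in $\mathfrak n_M$). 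Granting this, set $A':=u^{-1}A$: since $X\in M$, conjugation by $u$ normalizes $M$, so $A'LA'^{-1}=u^{-1}Mu=M$, while $A'H_{i_1}A'^{-1}=u^{-1}Tu=D\in\s(H)$. As $u$ is unipotent we have $A'\in SL(n,\C)$, so $A'$ still realizes the conjugacy $L\sim M$; replacing $A$ by $A'$ yields $AH_{i_1}A^{-1}=D=\sum_j a_jH_j$, as required.

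The main obstacle I anticipate is exactly this torus-conjugacy step: one must verify that the matrix clearing $N$ can be taken inside $\exp(\mathfrak n_M)$ (so that it normalizes $M$ and uses no forbidden direction such as $E_{m',m'+1}$) and inside $SL(n,\C)$, rather than being merely a similarity in $GL(n,\C)$. The bookkeeping is cleanest when the eigenvalues $+1,-1,0,\dots$ are viewed by position: the repeated eigenvalue $0$ forces the corresponding diagonal block of the semisimple $T$ to be diagonal already, so the nonzero part of $N$ only couples distinct eigenvalues and can be removed by a unipotent conjugation living in $\mathfrak n_M$. Once this is secured, the ``one less than full $E$'' hypothesis enters only through Corollary~\ref{nilpotent dimension} and Lemma~\ref{lemma 1}, which guarantee the clean semidirect decomposition $M=\mathfrak n_M\oplus\mathfrak h_M$ on which the whole argument rests.
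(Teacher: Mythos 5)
Your proposal is correct in substance, but it takes a genuinely different route from the paper's. The paper's own proof is two lines: $L$ has codimension-$1$ nilpotent basis, hence so does $M$ (Corollary \ref{nilpotent dimension}), and then, exactly as in Corollary \ref{full E, H center}, it appeals to conjugation-invariance of the center (Lemma \ref{conjugation of center}) to assert $C(M)\subset\s(H_1,\dots,H_{n-1})$. You instead pin down $\mathfrak{n}_M=\s(E\setminus\{E_{m',m'+1}\})$ via Lemma \ref{lemma 1}, observe that $T=AH_{i_1}A^{-1}$ is semisimple, split $T=D+N$ along $M=\mathfrak{h}_M\oplus\mathfrak{n}_M$, and clear $N$ by a unipotent $u\in\exp(\mathfrak{n}_M)$ that normalizes $M$ --- conjugacy of maximal tori inside the solvable algebra. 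Your route buys two things the paper's does not. First, read literally (for the given $A$) the corollary is false: with $H_1\in L$, the matrix $u=\exp(E_{13})$ normalizes $L$ itself yet sends $H_1\mapsto H_1-E_{13}\notin\s(H)$; so the statement can only mean ``after adjusting the conjugator,'' which is precisely what you prove and precisely the form consumed downstream, where Lemma \ref{H_i exact conjugation} only ever tests diagonal candidates $P\in\s(H_1,\dots,H_{n-1})$. Second, the paper's appeal to the center is itself incomplete, since $H_{i_1}$ is generally not central in $L$ (the center of these algebras is trivial), so the asserted inclusion $C(M)\subset\s(H)$ does not by itself place $AH_{i_1}A^{-1}$ in $\s(H)$; your semisimplicity argument supplies an actual proof where the paper gestures. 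Finally, the obstacle you flag does close: every element of $M$ has zero $(m',m'+1)$-entry, the clearing can be done level by level in $j-i$ (entries at positions with equal diagonal eigenvalues are forced to vanish by semisimplicity once the couplable entries are cleared, as one checks in the $3\times 3$ model), each step conjugates by $\exp(X)$ with $X\in\mathfrak{n}_M$ and hence preserves $M$ and the vanishing of the $(m',m'+1)$-entry, and any such unipotent $u$ lies in $SL(n,\C)$.
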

\begin{proof}
    As L has codimension 1 nilpotent basis, so does $M$. Thus, $C(M) \subset \s(H_1,..., H_{n-1})$. 
\end{proof}

\begin{lemma}
    In the case where the nilpotent basis of the algebra is $E$ or $E \setminus \{E_{i,i+1}\}$, $H_k$ can only conjugate to elements of the form $H_{ij} = E_{ii} - E_{jj}$
    \label{H_i exact conjugation}
\end{lemma}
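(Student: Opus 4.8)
The plan is to combine two ingredients: the structural fact that the conjugate of $H_k$ is forced to be \emph{diagonal}, and the elementary fact that conjugation preserves the spectrum. Throughout, write $M = ALA^{-1}$ for the conjugate (upper-triangular, regular) algebra and set $D := AH_kA^{-1} \in M$.

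First I would dispose of the nilpotent part. In the case where the nilpotent basis is all of $E$, Corollary~\ref{full E, H center} applies and gives $D = A H_k A^{-1} = \sum_{j=1}^{n-1} a_j H_j$; in the case where it is $E \setminus \{E_{i,i+1}\}$, Corollary~\ref{one less than full E} gives the same conclusion. Either way, $D$ lands in the Cartan $\text{span}(H_1,\dots,H_{n-1})$ and is therefore a traceless diagonal matrix with no strictly-upper-triangular component. This is the step that genuinely uses the hypothesis on the nilpotent basis: the fullness (or codimension-one fullness) of the $E$-part fixes the nilradical of $M$ and, through the cited corollaries, pins $D$ into the Cartan.

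Second I would read off the exact form from the spectrum. Since $D = AH_kA^{-1}$ is similar to $H_k$, the two have the same characteristic polynomial and hence the same eigenvalues with multiplicity. Reading $H_k = E_{kk} - E_{k+1,k+1}$ off its diagonal, its spectrum is $\{+1,\,-1,\,0,\dots,0\}$ with $0$ of multiplicity $n-2$. Because $D$ is diagonal, its diagonal entries \emph{are} its eigenvalues; therefore exactly one diagonal entry of $D$ equals $+1$, say in position $i$, exactly one equals $-1$, say in position $j$, and every remaining entry vanishes. Hence $D = E_{ii} - E_{jj} = H_{ij}$, as claimed.

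The main obstacle is the first step, not the second. Semisimplicity of $H_k$ — equivalently, diagonalizability of $AH_kA^{-1}$ — is by itself \emph{insufficient} to force diagonality: an upper-triangular diagonalizable matrix such as $E_{11} - E_{22} + E_{12}$ carries a nonzero nilpotent part while still having spectrum $\{+1,-1\}$, so the spectral matching of the second step cannot be run until the nilpotent component has already been eliminated. Consequently the entire weight of the argument rests on the corollaries of Section~3 (via Lemma~\ref{conjugation of center}) that force $AH_kA^{-1}$ into $\text{span}(H_1,\dots,H_{n-1})$; once that diagonality is secured, matching the conjugation-invariant spectrum to the diagonal entries finishes the proof immediately.
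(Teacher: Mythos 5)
Your proposal is correct and takes essentially the same route as the paper: it first pins $AH_kA^{-1}$ into $\s(H_1,\dots,H_{n-1})$ via Corollaries \ref{full E, H center} and \ref{one less than full E}, and then matches the conjugation-invariant characteristic polynomial $(x-1)(x+1)x^{n-2}$ to conclude the diagonal has exactly one $+1$, one $-1$, and zeros elsewhere, i.e.\ equals some $H_{ij}$. Your execution of the spectral step, reading the eigenvalues directly off the diagonal of $D$, is in fact cleaner than the paper's bookkeeping in the coordinates $a_1,\dots,a_{n-1}$, whose intermediate claim about $a_1$ and $a_{n-1}$ is unnecessary (and as stated inaccurate), while your version reaches the same conclusion without it.
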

\begin{proof}
    Let $P \in \s(H_1,.., H_{n-1})$ such that $P = \text{diag}(a_1, a_2 - a_1, \hdots a_{n-1} - a_{n-2}, -a_{n-1})$ with characteristic polynomial $f_P(x) = (x - a_1)(x -(a_2 - a_1)) \cdot ... \cdot (x - (a_{n-1} - a_{n-2})(x+a_{n-1})$. Recall, the characteristic polynomial is invariant under conjugation, and the characteristic polynomial of $H_i$ is $f_{H_i}(x) = (x-1)(x+1)x^{n-2}$. So, $n-2$ of the entries in $P$ should be zero. If either $a_1$ or $a_{n-1}$ is zero, that would force at least $n-1$ of the entries, and so neither of those two can be zero. Thus, $a_1 = a_{n-1} = +/-1$ and the rest are zero. If $A$ conjugates $H_i$ to $P$ such that $a_1 = -1$, $-P = -AH_i-A^{-1}$ is a matrix whose first entry is 1, so WLOG, let $a_1 = a_{n-1} = 1$ (because any algebra containing $E_{ii} - E_{jj} = P$ also contains $-P$).
\end{proof}

When we look for algebras conjugate to those with codimension 0 or 1 nilpotent basis, it is enough to check whether $H_k$ can conjugate to any other $H_{ij}$. We now state a theorem that we shall prove in section 6.1:
\begin{theorem} (Complete Classification of $\frac{n(n-1)}{2} - 1$ nilpotent upper-triangular regular subalgebras)
    Let the maximal nilpotent subalgebra of each $L_{i,i+1}$ be denoted by $N_{i,i+1} = \{E_{ij} \in L_{i,i+1}\}$. Then $\{N_{i,i+1}\}$ are all in distinct conjugacy classes.
\end{theorem}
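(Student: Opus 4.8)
The plan is to separate the $N_{i,i+1}$ by a numerical conjugacy invariant finer than those used so far: the sequence of \emph{column ranks of the terms of the lower central series}. First I would record what these subalgebras are. By Lemma \ref{lemma 1}, a nilpotent basis of codimension one inside $E$ must omit exactly one superdiagonal generator $E_{i,i+1}$ (dropping any $E_{ij}$ with $j-i\ge 2$ already costs $j-i\ge 2$ in dimension), so $N_{i,i+1}=\operatorname{span}(E\setminus\{E_{i,i+1}\})$ for $i=1,\dots,n-1$, and each really is closed since $E_{i,i+1}$ is not a bracket of two upper-triangular generators.

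Next I would set up the invariant. For a nilpotent star matrix $N$, Observation \ref{multiplication by star matrix} identifies the matrix power $N^m$ with the lower central series term $\gamma_m=[L,\gamma_{m-1}]$; and exactly as in Corollary \ref{dimension of derived series}, conjugate algebras $L,M$ have conjugate $\gamma_m(L),\gamma_m(M)$ for every $m$. By Lemma \ref{acting on column vectors lemma} the column rank of $\gamma_m$ is then a conjugacy invariant for each $m$.

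Then I would compute it via the walk interpretation of $N^m_{i,i+1}$: the $(a,d)$ entry is a star iff there is a strictly increasing length-$m$ walk from $a$ to $d$ avoiding the forbidden step $i\to i+1$. Such a walk exists iff $d-a>m$ (the slack lets one jump over the gap, e.g. $i\to i+2$), or $d-a=m$ and the forced unit walk $a,a+1,\dots,d$ misses $i\to i+1$, i.e. $a\notin\{i-m+1,\dots,i\}$. Hence $\gamma_m(N_{i,i+1})$ is the full gap-$\ge m$ pattern minus an antidiagonal of \emph{holes} at gap exactly $m$ whose sources are $\{i-m+1,\dots,i\}$. Counting nonzero rows, every row $a\le n-m-1$ retains a surviving star of gap $>m$, so the only row that can vanish is $a=n-m$, whose sole entry $(n-m,n)$ is a hole precisely when $n-m\in\{i-m+1,\dots,i\}$, i.e. when $i\ge n-m$. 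Thus $\operatorname{colrk}\gamma_m(N_{i,i+1})=(n-m)-[\,i\ge n-m\,]$.

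Finally I would read off $i$: as $m$ runs from $1$ to $n-1$ the column rank equals $n-m$ until $m=n-i$, after which it equals $n-m-1$, so the smallest $m$ with $\operatorname{colrk}\gamma_m=n-m-1$ is $m=n-i$. The invariant sequence therefore determines $i$, and the $N_{i,i+1}$ lie in distinct conjugacy classes. The main obstacle is the third step, pinning down the exact star pattern of the lower central series and checking that only the single row $n-m$ degenerates. Two points deserve emphasis because they explain why weaker tools fail: one must use column rank rather than plain dimension, since $\dim\gamma_m(N_{i,i+1})$ is symmetric under $i\mapsto n-i$ and cannot separate the mirror pair $N_{i,i+1},N_{n-i,n-i+1}$; and one must use the lower central series rather than the derived series, whose gap only grows dyadically and whose thresholds $n-2^k$ are too coarse to resolve adjacent indices such as $i$ and $i+1$.
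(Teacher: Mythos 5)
Your proposal is correct, and it rests on the same underlying invariant as the paper --- the column rank of powers of the star matrix, i.e.\ Lemma~\ref{acting on column vectors lemma} applied to $N^m$ --- but you reach the computation by a genuinely different route. The paper's Lemma~\ref{Nilpotent inductive classification} establishes $T_{i,i+1}^{\,n-i}(\C^n)={\C^n}_{i-1}$ versus $T_{j,j+1}^{\,n-i}(\C^n)={\C^n}_{i}$ for $j<i$ by induction, tracking a single well-chosen power for each $i$; you instead determine the full star pattern of $N_{i,i+1}^{m}$ for \emph{every} $m$ via the walk characterization (a star at $(a,d)$ iff $d-a>m$, or $d-a=m$ with $a\notin\{i-m+1,\dots,i\}$) and read off the closed formula $\operatorname{colrk}\gamma_m=(n-m)-[\,i\ge n-m\,]$, whose first drop at $m=n-i$ recovers exactly the paper's dichotomy ($i-1$ versus $i$ at the power $n-i$). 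Your route buys two things. First, the whole invariant sequence at once rather than one power per index. Second, and more importantly, it repairs a real wrinkle in the paper that you are right to flag: Observation~\ref{multiplication by star matrix} only identifies $N^{2^k}$ with the derived series, yet the paper's proof uses the power $N^{n-i}$ and invokes Corollary~\ref{Derived Series Left and row rank}; strictly speaking $N^{m}$ corresponds to the lower central series term $\gamma_m$, and your explicit identification of $N^m$ with $\gamma_m$, together with the observation that conjugation is an algebra automorphism carrying $\gamma_m(L)$ to $\gamma_m(M)$, is what actually licenses the invariance step --- the dyadic thresholds $n-2^k$ of the derived series are indeed too coarse to separate adjacent indices. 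Your closing remark that $\dim\gamma_m$ alone is blind to $i\mapsto n-i$ (by the anti-transpose symmetry) correctly explains why column rank, rather than plain dimension, must be the invariant; the paper uses column rank without commenting on this necessity.
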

In particular, if we are considering subalgebras of the following two types: $L_{pq} = \s(E \cup H \setminus \{H_{p_1,q_1},..., H_{p_t,q_t}\})$ and $ L'_{i,pq} = \s(E \setminus \{E_{i,i+1}\} \cup H \setminus \{H_{p_1,q_1},..., H_{p_t,q_t}\})$. It is sufficient to fix the nilpotent basis, as $\s(E - E_{j,j+1})$ cannot conjugate to any $\s(E - E_{m,m+1})$ for $j \neq m$. For the first case, we have full nilpotent basis so it conjugates to full nilpotent basis. For the second case, we fix $i$ and drop the $i$ in the notation, and refer to those algebras as $L'_{pq}$. 

\subsection{Adjoint Action of $H_{pq}$ on $\s(E)$}
Let $\adj(H_k)(\s(E)) = A_k$ and $\adj(H_{pq})(\s(E)) = A_{pq}$. Recall that $[H_k, E_ij] \neq 0 \iff i = k$ or $k + 1$ or $j = k$ or $k + 1$. See table \ref{Lie 2}. 
\begin{equation*}
    \adj(H_k)(\s(E)) = \begin{bmatrix}
        0_{11} & & & \ast_{1k} & \ast_{1,k+1} &  \\
        & \ddots &   & \vdots & \vdots & \\
        &  &   0_{k-1,k-1} & \ast_{k-1,k} & \ast_{k-1,k+1} & \\
         & & & 0_{k,k} & \ast_{k,k+1} & \ast_{k,k+2} & \hdots & \ast_{k,n} \\
         &  & & & 0_{k+1,k+1} & \ast_{k+1,k+2} & \hdots &\ast_{k+1, n} \\
          &  & & & &  \ddots  \\
           &  & & & & & \ddots  \\
          &  & & & & & &0_{nn}  \\
    \end{bmatrix}
\end{equation*}

\begin{equation*} A_{pq} = \adj(H_{pq})(\s(E)) = 
    \left[ \begin{array}{@{}*{15}{c}@{}}
      0_{11} & & & \ast_{1p} & 0_{1,p+1} & \hdots & 0_{1,q-1} & \ast_{1,q} &  \\
        & \ddots &   & \vdots & \vdots &  & \vdots & \vdots \\
        &  &   0_{p-1,p-1} & \ast_{p-1,p} & 0_{p-1,p+1} & \hdots & 0_{p-1,q-1} & \ast_{p-1,q} & \\
        &  &   & 0_{p,p} & \ast_{p,p+1} & \hdots  & \hdots & \hdots & \hdots & \hdots & \ast_{p-1,n} \\
        &  &   &  &\ddots   &  &  & \vdots & \\
        &  &   & & & \ddots  &  & \vdots & \\
        & & & & &   & 0_{q-1,q-1} & \ast_{q-1,q}  \\
        & & & & &   &  & 0_{q,q} &\ast_{q,q+1} & \hdots & \ast_{q,n} \\
        & & & & &   &  &  &\ddots &  &  \\
        & & & & &   &  &  & & \ddots   &  \\
                & & & & &   &  &  & &    & 0_{nn}  \\

    \end{array} \right]
\end{equation*}
Examining the matrices above, we get, $\C^n A_{pq} = \C^{n - (p-1)}$. However, if $H_{pq} \in L_1 = \s(E \cup \{H_{p_1,q_1},..., H_{p_k,q_k}\})$ conjugates to $H_{ab} \in L_2 = \s(E \cup \{H_{a_1,b_1},..., H_{a_k,b_k}\})$, then $A_{pq}$ and $A_{ab}$ are conjugate as matrices. This would mean they have the same action on row vectors, i.e. $\C^n PA_{pq}P^{-1} = \C^n A_{pq}  P^{-1}  {\mathbb C}^n$. ${\mathbb C}^n P = \mathbb{C}^n$ as $P$ is an invertible matrix, and so $ \C^nA_{pq}P^{-1}$ is the same dimension as $\C^n A_{pq}$. We get:
\begin{lemma}
    $H_{pq} \in L_1 = \s(E \cup \{H_{p_1,q_1},..., H_{p_k,q_k}\})$ conjugates to $H_{ab} \in L_2 = \s(E \cup \{ H_{a_1,b_1},..., H_{a_k,b_k}\})$ $\iff$ $p = a$. 
    \label{conjugacy of H for full nilpotent basis}
\end{lemma}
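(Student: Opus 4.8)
The plan is to extract a single conjugacy invariant from the adjoint action of $H_{pq}$ on the common nilradical $\s(E)$ and to read off $p$ from it. The crucial structural input is that both $L_1$ and $L_2$ contain the \emph{full} nilpotent basis $E$, so by Corollary~\ref{nilpotent dimension} the maximal nilpotent subalgebra of each is exactly $\s(E)$. Hence any $g \in SL(n,\C)$ realizing the conjugacy must carry $\s(E)$ onto $\s(E)$: nilpotent elements go to nilpotent elements by Lemma~\ref{nilpotent conjugacy}, and the top-dimensional nilpotent subalgebra is preserved. This is the hypothesis that makes the entire argument run, so the first thing I would nail down is that $g\,\s(E)\,g^{-1} = \s(E)$ for every conjugating $g$.

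Granting this, and assuming $g H_{pq} g^{-1} = H_{ab}$, the two adjoint actions are intertwined: for each $X \in \s(E)$ we have $g\,[H_{pq},X]\,g^{-1} = [\,g H_{pq} g^{-1},\, g X g^{-1}\,] = [H_{ab}, gXg^{-1}]$, and as $X$ ranges over $\s(E)$ so does $gXg^{-1}$ (since $g\,\s(E)\,g^{-1} = \s(E)$). Therefore $g A_{pq} g^{-1} = A_{ab}$, where $A_{pq} = \adj(H_{pq})(\s(E))$ and $A_{ab} = \adj(H_{ab})(\s(E))$ are the image subalgebras. Thus $A_{pq}$ and $A_{ab}$ are conjugate star-subalgebras of $\s(E)$, and by Lemma~\ref{acting on row vectors lemma} conjugate subalgebras have equal row rank (right-action dimension). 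It therefore suffices to compute the row rank of $A_{pq}$ and show it pins down $p$.

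That computation is the routine core. From $[H_{pq}, E_{ij}] = (\delta_{ip} - \delta_{jp} - \delta_{iq} + \delta_{jq})E_{ij}$ the image $A_{pq}$ is spanned by exactly those $E_{ij}$ with nonzero coefficient, i.e. the basis vectors lying in row $p$, row $q$, column $p$, or column $q$; this is precisely the star pattern displayed for $A_{pq}$ before the lemma. Counting the columns that carry a star, row $p$ alone fills columns $p+1,\dots,n$ and column $p$ supplies column $p$ (for $p \geq 2$), so the right action sends $\C^n$ onto a subspace of dimension $n-(p-1)$, as recorded above. Since this dimension is a conjugacy invariant of the pair, $n-(p-1) = n-(a-1)$, which forces $p = a$ and establishes the forward implication.

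For the converse I would note that once $p = a$ the row-rank obstruction vanishes, and one must actually exhibit a conjugacy carrying $H_{pq}$ to the chosen generator of $L_2$. I expect this to be the genuine obstacle rather than the row-rank count: conjugation by the normalizer of $\s(E)$ (the Borel) preserves the diagonal of a diagonal matrix, so matching the second index cannot be done by such a $g$ alone, and the converse must instead be read together with how the full families $L_{pq}$ and $L'_{pq}$ are matched in Section~6 (choosing the target generator compatibly with $q$ and importing the compatibility of the remaining $H_{p_i q_i}$). In short, necessity of $p = a$ is clean and self-contained via the adjoint-action invariant, whereas sufficiency borrows from the surrounding classification.
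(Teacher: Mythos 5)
Your argument is essentially the paper's own proof: the paper likewise observes that a conjugacy of $L_1$ and $L_2$ forces $A_{pq}$ and $A_{ab}$ to be conjugate as matrices (since both algebras have full nilpotent part $\s(E)$), then extracts $p$ from the invariant row rank $\C^n A_{pq} = \C^{n-(p-1)}$, and, exactly as you do, leaves the backward direction of the ``iff'' to the surrounding analysis that separately handles $q$ via column action and Lemma~\ref{conjugacy of h_pq}. Your added care in justifying $g\,\s(E)\,g^{-1} = \s(E)$, and your parenthetical restriction to $p \geq 2$ --- which the paper silently assumes, even though for $p = 1$ the column $p$ of $A_{1q}$ is empty and the row rank is $n-1$, coinciding with the $p=2$ value --- are the only points where you are more explicit than the source.
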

We need only distinguish $H_{k}$ from $H_{k,q}$. We treat the case $q = n$ separately. Suppose $q < n$, $A_{pq} \C^n = \C^{q}$. If $H_{pq} \in L_1 = \s(E \cup \{H_{p_1,q_1},..., H_{p_k,q_k}\})$ conjugates to $H_{pb} \in L_2 = \s(E  \cup \{H_{p_1,b_1},..., H_{p_k,b_k}\})$, then $A_{pq}$ and $A_{pb}$ are conjugate as matrices. This would result in the same column action $PA_{pq}P^{-1} \C^n  = PA_{pq} \C^n$. $ P^{-1}\C^n = \mathbb{C}^n$ as $P$ is an invertible matrix, and so $ PA_{pq}\C^n$ is the same dimension as $ A_{pq}\C^n$. Finally $H_{p,n}$ cannot conjugate to any $H_{pq}$ for $q \leq n-1$ by lemma \ref{conjugacy of h_pq}. 
\begin{theorem} (Conjugacy of $L_{pq}$)
    Let $L_{pq} = \s(E \cup H \setminus \{H_{p_1,q_1},..., H_{p_t,q_t}\})$. $L_{pq}$ conjugates to $L_{ab} \iff (p,q) = (a,b)$
    \label{conjugacy of l_ab}
\end{theorem}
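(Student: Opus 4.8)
The backward direction is immediate: if the two removed lists coincide then $L_{pq}$ and $L_{ab}$ are literally the same subalgebra, so they are conjugate by the identity. The plan is therefore to prove the forward direction, that a conjugacy $L_{pq}\sim L_{ab}$ forces the removed lists to agree, and my strategy is to first pin down the conjugating matrix, then transport the diagonal part, and finally read off the index data.

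First I would write $L_{pq}=\mathfrak n\oplus D_{pq}$ with $\mathfrak n=\s E$ the full strictly-upper-triangular nilpotent algebra and $D_{pq}\subseteq\mathfrak h$ the diagonal part, and similarly $L_{ab}=\mathfrak n\oplus D_{ab}$. The key preliminary observation is that the set of nilpotent elements of $L_{pq}$ is exactly $\mathfrak n$: an element $N+d$ with $d$ diagonal is nilpotent iff $d=0$. Hence if $g\in SL(n,\C)$ conjugates $L_{pq}$ to $L_{ab}$, then by Lemma \ref{nilpotent conjugacy} it sends nilpotent elements to nilpotent elements, so $g\mathfrak n g^{-1}\subseteq\mathfrak n$, and equality of the two nilpotent dimensions (Corollary \ref{nilpotent dimension}) upgrades this to $g\mathfrak n g^{-1}=\mathfrak n$. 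Thus $g$ normalizes the full nilpotent algebra; since $\mathfrak n$ determines the standard complete flag, $g$ must stabilize that flag and is therefore upper triangular.

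Next I would transport the diagonal part. Writing $g=tu$ with $t$ diagonal and $u$ unipotent upper triangular, for any $H\in D_{pq}\subseteq\mathfrak h$ conjugation by a unitriangular matrix preserves the diagonal entries and $t$ commutes with $H$, so $gHg^{-1}=H+N_H$ with $N_H\in\mathfrak n$. Since $gHg^{-1}\in L_{ab}$ and its $\mathfrak h$-projection is exactly $H$, we conclude $H\in D_{ab}$, giving $D_{pq}\subseteq D_{ab}$; running the identical argument with $g^{-1}$ (also upper triangular) yields $D_{ab}\subseteq D_{pq}$, so $D_{pq}=D_{ab}$. From here a fixed $H_{rs}$ lies in $D_{pq}$ iff it lies in $D_{ab}$, so the two algebras contain precisely the same pure diagonal generators, the removed lists are the complements of these present generators, and hence $(p,q)=(a,b)$. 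I would cross-check this against the paper's finer invariants: Lemma \ref{conjugacy of H for full nilpotent basis} already forces a present $H_{pq}$ to conjugate only to an $H_{ab}$ with $p=a$, the adjoint-action column rank $A_{pq}\C^n=\C^{q}$ forces $q=b$ for $q<n$, and Lemma \ref{conjugacy of h_pq} handles $q=n$, together producing a bijection of present generators matching indices exactly.

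The hard part is the reduction carried out in the first two steps, namely the claim that $g$ may be taken to preserve the flag so that the $\mathfrak h$-projection of a diagonal element is conjugation-invariant; without it, $g$ could in principle shear a pure $H_{pq}$ into a mixture of other diagonal generators plus nilpotent terms, and the notion of ``which $H_{rs}$ are present'' would not obviously be preserved. The normalizer computation that the stabilizer of the full nilpotent algebra is the Borel $B$ of upper-triangular matrices is the clean way to rule this out; should one wish to avoid invoking it, the paper's rank-invariance lemmas (\ref{min/max rank invariance}, \ref{acting on column vectors lemma}, \ref{acting on row vectors lemma}) together with the explicit ranks $A_{pq}\C^n=\C^{q}$ and $\C^n A_{pq}=\C^{\,n-p+1}$ furnish a coordinate-level substitute, and I expect the $q=n$ boundary case to require the separate argument already supplied by Lemma \ref{conjugacy of h_pq}.
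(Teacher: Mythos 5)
Your proposal is correct, but it takes a genuinely different route from the paper's. The paper deduces the theorem directly from its adjoint-action computations: Lemma~\ref{conjugacy of H for full nilpotent basis} (row action $\C^n A_{pq} = \C^{n-(p-1)}$) forces $p=a$, the column action $A_{pq}\C^n = \C^{q}$ forces $q=b$ when $q<n$, and the boundary case $q=n$ is handled by the separate last-row computation of Lemma~\ref{conjugacy of h_pq} --- precisely the three ingredients you relegate to your ``fallback'' cross-check. Your primary argument instead pins down the conjugating element: the nilpotent elements of $L_{pq}$ are exactly $\s(E)$ (an upper-triangular $d+N$ is nilpotent iff its diagonal part $d$ vanishes), so $g\,\s(E)\,g^{-1}=\s(E)$; the standard flag is recovered from $\s(E)$ via $V_{n-j}=\s(E)^{j}\,\C^n$, so $g$ is upper triangular, and the factorization $g=tu$ gives $gHg^{-1}=H+(\text{nilpotent})$ for diagonal $H$, whence $D_{pq}=D_{ab}$ and the algebras are literally equal. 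This buys three things the paper's proof does not: it is uniform in $q$ (no special case at $q=n$, no tables), it yields the stronger conclusion that any conjugacy is implemented by a Borel element and forces equality of the algebras, and it rigorously supplies a step the paper treats loosely --- that a present generator $H_{pq}$ must match a present generator $H_{ab}$ rather than smear into a mixture (the paper leans on Corollary~\ref{full E, H center} and Lemma~\ref{H_i exact conjugation} for this). What the paper's invariant-based method buys in exchange is portability: it transfers essentially verbatim to the codimension-one nilpotent bases $E\setminus\{E_{i,i+1}\}$ of Section 3.2, where your normalizer argument breaks down --- there the nilpotent part is no longer all of $\s(E)$, its normalizer is strictly larger than the Borel, and the analogous conclusion genuinely fails at the boundary ($H_{p,n}$ conjugates to $H_{p,n-1}$ when $i=n-1$). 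One caveat common to both write-ups: the conclusion $(p,q)=(a,b)$ presupposes that distinct removal lists determine distinct diagonal subspaces; your formulation $D_{pq}=D_{ab}$ is the invariant way to state what is actually proved.
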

\begin{proof}
    If $a \neq p$, they we have different action on row vectors. If $q \neq b$, we have different action on column vectors for $q < n$, and no $q < n$ conjugates to $q = n$ by lemma \ref{conjugacy of h_pq}.
\end{proof}

\subsection{Adjoint Action of $H_{pq}$ on $\s(E \setminus \{ E_{i,i+1}) \}$}
 We analyze $\adj(H_{pq})(E \setminus \{ E_{i,i+1}\})$. Examining the matrices in tables \ref{tab:adjoint action of h_Pq on one less than full, cases 1-3}, \ref{tab:adjoint action of h_Pq on one less than full, cases 4-7}, and \ref{tab:adjoint action of h_Pq on one less than full, case 8} we have: $\C^n A_{pq} = \C^{n - (p-1)}$. If $H_{pq} \in L_1 = \s(E \setminus \{E_{i,i+1}\} \cup \{H_{p_1,q_1},..., H_{p_k,q_k}\})$ conjugates to $H_{ab} \in L_2 = \s(E \setminus \{E_{i,i+1}\} \cup \{H_{a_1,b_1},..., H_{a_k,b_k}\})$, then $A_{pq}$ and $A_{ab}$ are conjugate as matrices. This should result in same action on row vectors i.e. $
\C^n PA_{pq}P^{-1} = \C^n A_{pq}  P^{-1}  {\mathbb C}^n$. So: 
\begin{lemma}
    $H_{pq} \in L_1 = \s(E \setminus \{E_{i,i+1}\} \cup \{H_{p_1,q_1},..., H_{p_k,q_k}\})$ conjugates to $H_{ab} \in L_2 = \s(E \setminus \{E_{i,i+1}\} \cup \{ H_{a_1,b_1},..., H_{a_k,b_k}\})$ $\iff$ $p = a$. 
    \label{conjugacy when p is not equal to a}
\end{lemma}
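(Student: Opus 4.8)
The plan is to run the argument in exact parallel with Lemma~\ref{conjugacy of H for full nilpotent basis}, the analogous statement for the full nilpotent basis $\s(E)$, simply replacing the adjoint-action tables for $\s(E)$ by tables~\ref{tab:adjoint action of h_Pq on one less than full, cases 1-3}--\ref{tab:adjoint action of h_Pq on one less than full, case 8} for $\s(E\setminus\{E_{i,i+1}\})$. Everything hinges on a single conjugacy invariant: the \emph{row rank} of the star matrix $A_{pq}=\adj(H_{pq})(\s(E\setminus\{E_{i,i+1}\}))$, that is, the number of columns carrying a star, equivalently $\dim(\C^n A_{pq})$. If this equals $n-(p-1)$ for every admissible position of the deleted generator, then the lemma is immediate.

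First I would set up the invariance engine. Suppose $L_1$ conjugates to $L_2$ by $g\in SL(n,\C)$ and $H_{pq}\in L_1$ conjugates to $H_{ab}\in L_2$. Since $L_1$ and $L_2$ share the same codimension-one nilpotent basis $\s(E\setminus\{E_{i,i+1}\})$, Corollary~\ref{nilpotent algebras conjugate to nilpotent algebras} together with Corollary~\ref{nilpotent dimension} forces $g$ to carry the maximal nilpotent subalgebra of $L_1$ onto that of $L_2$. Hence the two adjoint operators, restricted to this common nilpotent domain, are intertwined by conjugation, so the image star matrices $A_{pq}$ and $A_{ab}$ are conjugate. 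By Lemma~\ref{acting on row vectors lemma} the right action on row vectors is preserved under conjugation, giving $\dim(\C^n A_{pq})=\dim(\C^n A_{ab})$.

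Next I would extract the row rank directly from the bracket. The coefficient of $E_{kl}$ in $[H_{pq},E_{kl}]$ is $\delta_{pk}-\delta_{lp}-\delta_{qk}+\delta_{lq}$, so a surviving star sits at $(k,l)$ precisely when this is nonzero and $E_{kl}$ still lies in $\s(E\setminus\{E_{i,i+1}\})$. A short check shows that every column $l\ge p$ carries such a star (supplied by row $p$, and for columns $p$ and $q$ also by rows below $p$), while columns $l<p$ never do, so $\dim(\C^n A_{pq})=n-(p-1)$. Combined with the invariance step this yields $n-(p-1)=n-(a-1)$, i.e.\ $p=a$, the forward direction. The converse is handled exactly as in the full-basis case: once $p=a$ the row-rank invariant imposes no further constraint, and the remaining index is separated afterward by the column action $A_{pq}\C^n$ (and by Lemma~\ref{conjugacy of h_pq} when one index equals $n$).

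The main obstacle is that deleting $E_{i,i+1}$ can \emph{suppress} a column that the full-basis computation relied on. Column $i+1$ receives star contributions only from rows $k\le i$, so if the unique surviving contributor there was $E_{i,i+1}$ itself, then column $i+1$ empties out and the naive count $n-(p-1)$ can drop by one. The eight configurations in tables~\ref{tab:adjoint action of h_Pq on one less than full, cases 1-3}--\ref{tab:adjoint action of h_Pq on one less than full, case 8} are precisely the boundary overlaps among the indices $p$, $q$, $i$, and $i+1$ where this can occur, and the genuine content of the proof is verifying, case by case, that some other generator $E_{k,i+1}$ with $k\neq i$ still furnishes a surviving star whenever $i+1\ge p$. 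Where the plain row rank degenerates at such a boundary, I would supplement it with the column-rank invariant $A_{pq}\C^n$ and with Lemma~\ref{conjugacy of h_pq}, exactly as in the treatment of $\s(E)$, so that equality of the combined invariants still forces $p=a$.
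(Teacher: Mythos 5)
Your proposal follows the paper's own argument essentially verbatim: the paper likewise computes the star matrices of $\adj(H_{pq})$ on $\s(E\setminus\{E_{i,i+1}\})$ via the eight cases in Tables~\ref{tab:adjoint action of h_Pq on one less than full, cases 1-3}--\ref{tab:adjoint action of h_Pq on one less than full, case 8}, reads off $\C^n A_{pq}=\C^{n-(p-1)}$ in every case, and concludes $p=a$ from the conjugation-invariance of the row action (Lemma~\ref{acting on row vectors lemma}), with $q$ versus $b$ deferred to the subsequent column-action analysis and Lemma~\ref{conjugacy of h_pq}. Your explicit case-by-case check that deleting $E_{i,i+1}$ never empties a column with $l\ge p$ is a point the paper leaves implicit in its tables, so your write-up is, if anything, slightly more careful on the same route.
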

Thus, we need only distinguish $H_{k}$ from $H_{k,q}$. We shall treat the case $q = n, n-1$ separately. Suppose $q \leq n -2$, i.e. $q +2 \leq n$. For all cases: $A_{pq} \C^n = \C^{q}$. If $H_{pq} \in L_1 = \s(E \setminus \{E_{i,i+1}\} \cup \{H_{p_1,q_1},..., H_{p_k,q_k}\})$ conjugates to $H_{pb} \in L_2 = \s(E \setminus \{E_{i,i+1}\} \cup \{H_{p_1,b_1},..., H_{p_k,b_k}\})$, then $A_{pq}$ and $A_{pb}$ are conjugate as matrices. This results in same column action, i.e. $ PA_{pq}P^{-1} \C^n  = PA_{pq} \C^n $. Thus, we have:
\begin{lemma} For $q_1,...,q_k \leq n -2 $
    $H_{pq} \in L_1 = \s(E \setminus \{E_{i,i+1}\} \cup \{H_{p_1,q_1},..., H_{p_k,q_k}\})$ conjugates to $H_{ab} \in L_2 = \s(E \setminus \{E_{i,i+1}\} \cup \{ H_{a_1,b_1},..., H_{a_k,b_k}\})$ $\iff$ $(p,q) = (a,b)$  i.e. the two algebras are identical.
    \label{conjugacy for the second case}
\end{lemma}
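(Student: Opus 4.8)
The plan is to mirror the argument for the full-nilpotent-basis case (Theorem \ref{conjugacy of l_ab}), exploiting the two dimension invariants attached to the adjoint action $A_{pq} = \adj(H_{pq})(\s(E \setminus \{E_{i,i+1}\}))$: the dimension of its image under right (row) action and under left (column) action. Both are conjugation invariants by Lemmas \ref{acting on row vectors lemma} and \ref{acting on column vectors lemma}, since an invertible change of basis never alters the dimension of a row- or column-space. The whole proof is then a two-invariant comparison: the row action pins down $p$, and the column action pins down $q$.

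First I would invoke the preceding lemma directly. If $H_{pq} \in L_1$ conjugates to $H_{ab} \in L_2$, then $A_{pq}$ and $A_{ab}$ are conjugate as matrices, and comparing row-space dimensions $\C^n A_{pq} = \C^{n-(p-1)}$ forces $p = a$ (Lemma \ref{conjugacy when p is not equal to a}). This reduces the task to distinguishing $H_{pq}$ from $H_{pb}$ with a common first index.

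Next, under the standing hypothesis $q_1, \dots, q_k \leq n-2$, I would read off from the adjoint-action tables that the column action satisfies $A_{pq}\C^n = \C^{q}$, so its image has dimension exactly $q$. Because $A_{pq}$ and $A_{pb}$ are conjugate, $PA_{pq}P^{-1}\C^n = PA_{pq}\C^n$ (using $P^{-1}\C^n = \C^n$) has the same dimension as $A_{pq}\C^n$, whence $q = b$. Combining the two steps yields $(p,q) = (a,b)$; since both algebras share the fixed nilpotent basis $E \setminus \{E_{i,i+1}\}$ and their diagonal generators now agree index-for-index, $L_1 = L_2$.

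The main obstacle is not the dimension-invariance bookkeeping, which is immediate once the image dimensions are in hand, but justifying the claim $A_{pq}\C^n = \C^{q}$ uniformly. Deleting $E_{i,i+1}$ can erase an entry in a column of the adjoint matrix, so one must check that, across every relative position of the pair $(p,q)$ and the removed index $i$, the reachable span is still all of $\C^{q}$ and no smaller. This is exactly the content of the referenced case tables, and it is why the restriction $q \leq n-2$ is imposed: the borderline values $q = n-1$ and $q = n$, where the missing off-diagonal entry could drop the column rank, are deferred to a separate analysis. Granting that verification, the proof collapses to the short comparison above.
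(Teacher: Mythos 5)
Your proposal matches the paper's own argument step for step: it first applies Lemma \ref{conjugacy when p is not equal to a} (row action of $A_{pq}$, dimension $n-(p-1)$) to force $p=a$, then uses the column action $A_{pq}\C^n = \C^q$, read off from the case tables under the hypothesis $q \leq n-2$, to force $q=b$, with the borderline cases $q = n-1, n$ deferred exactly as the paper defers them. The proposal is correct and takes essentially the same approach.
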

We must check if any $H_{p,n-1}$ and $H_{p,n}$ conjugate to each other and any $H_{pq}$ for $q < n-1$. $A_{pq}(\C^n) = \C^{q}$. For $q = n-1$, for all cases 1,2, 5-8 (case 3 doesn't occur), we have $A_{p,n-1}(\C^n) = \C^{n-1}$. Thus $H_{pq}$ doesn't conjugate to $H_{p,n-1}$ when $i \neq n-1$, as they have different action on column vectors. For $i = n-1$ i.e. we are looking at nilpotent basis $E \setminus \{E_{n-1,n}\}$, we have $A_{p,n-1}(\C^n) = \C^{n-2}$. Here, $H_{p,n-1}$ is not conjugate to any $H_{pq}$ for $q < n-2$. We must distinguish $H_{p,n-2}$ and $H_{p,n-1}$.  $H_{p,n-2}$ belongs in case 7 which has max rank 4 and $H_{p,n-1}$ belongs in case 4 which has max rank 3, thus they are not conjugate by lemma \ref{min max rank of adjoint action}.
\begin{table}[h!]
    \centering
    \begin{tabular}{||c | c|  c||} 
 \hline
 No. & Case & $\adj(H_{pq})(E \setminus \{E_{i,i+1}\})$\\ [0.5ex] 
 \hline\hline
 1 & $i, i + 1 < p < q$ & $
    \left[ \begin{array}{@{}*{15}{c}@{}}
       0_{11} & & & \ast_{1p} & 0_{1,p+1} & \hdots & 0_{1,q-1} & \ast_{1,q} &  \\
        & \ddots &   & \vdots & \vdots &  & \vdots & \vdots \\
        &  &   0_{p-1,p-1} & \ast_{p-1,p} & 0_{p-1,p+1} & \hdots & 0_{p-1,q-1} & \ast_{p-1,q} & \\
        &  &   & 0_{p,p} & \ast_{p,p+1} & \hdots  & \hdots & \hdots & \hdots & \hdots & \ast_{p-1,n} \\
        &  &   &  &\ddots   &  &  & \vdots & \\
        &  &   & & & \ddots  &  & \vdots & \\
        & & & & &   & 0_{q-1,q-1} & \ast_{q-1,q}  \\
        & & & & &   &  & 0_{q,q} &\ast_{q,q+1} & \hdots & \ast_{q,n} \\
        & & & & &   &  &  &\ddots &  &  \\
        & & & & &   &  &  & & \ddots   &  \\
                & & & & &   &  &  & &    & 0_{nn}  
    \end{array} \right] $\\ 
 \hline
 2 & $i = p < i + 1 < q$ & $\left[ \begin{array}{@{}*{17}{c}@{}}
     & & \ast_{1,p} & & & & & & & \ast_{1,q} & & &\\
     & & \vdots & & &  & & & & \vdots & & &\\
     & & \ast_{p-1,p} & & & & & & & \ast_{p-1,q} & & &\\
     & & 0_{pp} & \ast_{p,p+1} & \hdots & \ast_{p,i} & 0_{p,i+1} & \ast_{p,i+2} & \hdots & \ast_{pq} & \hdots & \hdots & \ast_{pn}\\
     & & &  &  &  &  & &  & \vdots &  & \\
     & & &  &  &  &  & &  & \ast_{q-1,q} &  & \\
     & & &  &  &  &  & &  & 0_{q,q} & \ast_{q,q+1} & \hdots & \ast_{q,n} \\
     & & &  &  &  &  & &  & &  &  & \\
    & & &  &  &  &  & &  & &  &  & 
\end{array} \right]$
 \\
 \hline
 3 & $i = p < q = i + 1$ & $\begin{bmatrix}
        & & \ast_{1,i} & \ast_{1,i+1} & & & \\
        & & \vdots & \vdots & & & &\\
        & & \ast_{i-1,i} & \ast_{i-1,i+1} & & & \\
        & & 0 & 0 & \ast_{i,i+2}& \hdots & \ast_{i,n}\\
        & & 0 & 0 & \ast_{i+1,i+2}& \hdots & \ast_{i+1,n}\\
        & &  &  & &  & \\
         & &  &  & &  & \\        
    \end{bmatrix}$\\ [1ex]
 \hline 
\end{tabular}
    \caption{Adjoint Action of $H_{pq}$ on $E \setminus \{E_{i,i+1}\}$, Cases 1 -3}
    \label{tab:adjoint action of h_Pq on one less than full, cases 1-3}
\end{table}
\\
\begin{table}[]
    \centering
    \begin{tabular}{||c | c|  c||} 
 \hline
 No. & Case & $\adj(H_{pq})(E \setminus \{E_{i,i+1}\})$\\ [0.5ex] 
 \hline\hline
4 & $p < i = q < i + 1$ &  $\left[ \begin{array}{@{}*{17}{c}@{}} 
 & & \ast_{1,p} &  & &\ast_{1,i} \\
 & & \vdots & & &\vdots \\
 & & \ast_{p-1.p}& & & \ast_{p-1.i} \\
 & & 0_{pp} & \ast_{p,p+1} & \hdots & \ast_{p,p+i -1} & \ast_{p,i} & \ast_{p,i+ 1} & \hdots & \hdots & \ast_{pn}\\
 & &  & &  & & \vdots &  &  & \\
 & &  & &  & & \ast_{i-1,i} &  &  & \\
 & &  & &  & & 0_{ii} & 0_{i,i+1} & \ast_{i,i+2}  & \hdots & \ast_{i,n} \\
 & &  & &  & &  &  &  &  \\
 & \\
 & 
 \end{array} \right]$\\
 \hline
 5 & $p < i , i + 1 < q$ & $\left[ \begin{array}{@{}*{15}{c}@{}}
       0_{11} & & & \ast_{1p} & 0_{1,p+1} & \hdots & 0_{1,q-1} & \ast_{1,q} &  \\
        & \ddots &   & \vdots & \vdots &  & \vdots & \vdots \\
        &  &   0_{p-1,p-1} & \ast_{p-1,p} & 0_{p-1,p+1} & \hdots & 0_{p-1,q-1} & \ast_{p-1,q} & \\
        &  &   & 0_{p,p} & \ast_{p,p+1} & \hdots  & \hdots & \hdots & \hdots & \hdots & \ast_{p-1,n} \\
        &  &   &  &\ddots   &  &  & \vdots & \\
        &  &   & & & \ddots  &  & \vdots & \\
        & & & & &   & 0_{q-1,q-1} & \ast_{q-1,q}  \\
        & & & & &   &  & 0_{q,q} &\ast_{q,q+1} & \hdots & \ast_{q,n} \\
        & & & & &   &  &  &\ddots &  &  \\
        & & & & &   &  &  & & \ddots   &  \\
                & & & & &   &  &  & &    & 0_{nn}  
    \end{array} \right] $ \\
    \hline 
     6 & $p < i < q = i + 1$ & $\left[ \begin{array}{@{}*{15}{c}@{}}
    & & \ast_{1,p} & & & & \ast_{1,i + 1} \\
    & & \vdots & & & & \vdots \\
    & & \ast_{p-1,p} & & & & \ast_{p-1, i + 1}\\
    & & 0_{pp} & \ast_{p,p+1} & \hdots & \ast_{p,i} & \ast_{p,i+1} & \hdots & \hdots & \ast_{pn}\\
    & &  &  & &  & \vdots&  &  & \\
    & &  &  & &  & \ast_{i-1,i+1}&  &  & \\
    &  &  &  & &  & 0_{i,i+1}&  &  & \\
    &  &  &  & &  & 0_{i+1 ,i+1}& \ast_{i+1, i + 2} & \hdots  &\ast_{i_1, n} \\
    & \\
    &    \end{array} \right]$
    \\
    \hline 
    7 & $p < q < i,i + 1 $& $
    \left[ \begin{array}{@{}*{15}{c}@{}}
       0_{11} & & & \ast_{1p} & 0_{1,p+1} & \hdots & 0_{1,q-1} & \ast_{1,q} &  \\
        & \ddots &   & \vdots & \vdots &  & \vdots & \vdots \\
        &  &   0_{p-1,p-1} & \ast_{p-1,p} & 0_{p-1,p+1} & \hdots & 0_{p-1,q-1} & \ast_{p-1,q} & \\
        &  &   & 0_{p,p} & \ast_{p,p+1} & \hdots  & \hdots & \hdots & \hdots & \hdots & \ast_{p-1,n} \\
        &  &   &  &\ddots   &  &  & \vdots & \\
        &  &   & & & \ddots  &  & \vdots & \\
        & & & & &   & 0_{q-1,q-1} & \ast_{q-1,q}  \\
        & & & & &   &  & 0_{q,q} &\ast_{q,q+1} & \hdots & \ast_{q,n} \\
        & & & & &   &  &  &\ddots &  &  \\
        & & & & &   &  &  & & \ddots   &  \\
                & & & & &   &  &  & &    & 0_{nn}  
    \end{array} \right] $ \\ [1ex]
    \hline 
\end{tabular}
    \caption{Adjoint Action of $H_{pq}$ on $E \setminus \{E_{i,i+1}\}$, Cases 4 -7}
    \label{tab:adjoint action of h_Pq on one less than full, cases 4-7}
\end{table}
\\
\begin{table}[h!]
    \centering
    \begin{tabular}{||c | c|  c||} 
 \hline
 No. & Case & $\adj(H_{pq})(E \setminus \{E_{i,i+1}\})$\\ [0.5ex] 
 \hline\hline
 8 & $i < p = i + 1 < q $ & $ \left[ \begin{array}{@{}*{15}{c}@{}}
       0_{11} & & & \ast_{1p} & 0_{1,p+1} & \hdots & 0_{1,q-1} & \ast_{1,q} &  \\
        & \ddots &   & \vdots & \vdots &  & \vdots & \vdots \\
        &  &   0_{p-1,p-1} & \ast_{p-1,p} & 0_{p-1,p+1} & \hdots & 0_{p-1,q-1} & \ast_{p-1,q} & \\
        &  &   & 0_{p,p} & \ast_{p,p+1} & \hdots  & \hdots & \hdots & \hdots & \hdots & \ast_{p-1,n} \\
        &  &   &  &\ddots   &  &  & \vdots & \\
        &  &   & & & \ddots  &  & \vdots & \\
        & & & & &   & 0_{q-1,q-1} & \ast_{q-1,q}  \\
        & & & & &   &  & 0_{q,q} &\ast_{q,q+1} & \hdots & \ast_{q,n} \\
        & & & & &   &  &  &\ddots &  &  \\
        & & & & &   &  &  & & \ddots   &  \\
                & & & & &   &  &  & &    & 0_{nn}  
    \end{array} \right]$\\ [1ex]
 \hline 
\end{tabular}
    \caption{Adjoint Action of $H_{pq}$ on $E \setminus \{E_{i,i+1}\}$, Case 8}
    \label{tab:adjoint action of h_Pq on one less than full, case 8}
\end{table}
\begin{lemma} (Invariance of max rank) Let $M$ be a Lie algebra. Let $x \in M$. Then $\max_{y \in M}(\adj(x)y)$ and  $\min_{y \in M}(\adj(x)y)$ is invariant under conjugation.\label{min max rank of adjoint action}
\end{lemma}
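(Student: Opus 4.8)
The plan is to exploit the fact that conjugation by $g \in SL(n,\C)$ is simultaneously a Lie-algebra automorphism and a similarity transformation, so it commutes with the adjoint action while preserving matrix rank. Concretely, suppose $M$ and $M'$ are conjugate, say $M' = gMg^{-1}$, and let $x' = gxg^{-1}$ be the image of $x$. I read $\adj(x)y$ as the matrix $[x,y]$ and interpret $\max_{y\in M}(\adj(x)y)$ as $\max_{y \in M}\operatorname{rank}([x,y])$ (and similarly the min, taken over nonzero $y$), matching the usage of ``max rank'' in the preceding case analysis.

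First I would record the standard equivariance identity. For any $x,y$ and invertible $g$,
\[
g\,[x,y]\,g^{-1} = [\,gxg^{-1},\, gyg^{-1}\,],
\]
which is immediate from $g(xy-yx)g^{-1} = (gxg^{-1})(gyg^{-1}) - (gyg^{-1})(gxg^{-1})$. In the notation of the lemma this reads $g\,(\adj(x)y)\,g^{-1} = \adj(x')\bigl(gyg^{-1}\bigr)$; that is, conjugation intertwines $\adj(x)$ on $M$ with $\adj(x')$ on $M'$.

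Next I would invoke two invariance facts. Since $g$ is invertible, $\operatorname{rank}(gAg^{-1}) = \operatorname{rank}(A)$ for every matrix $A$ (the similarity-invariance of rank that already underlies Lemma~\ref{min/max rank invariance}), and the map $y \mapsto gyg^{-1}$ is a linear bijection from $M$ onto $M'$ sending $0$ to $0$. Combining these with the equivariance identity, for each $y \in M$ we obtain $\operatorname{rank}\bigl(\adj(x')(gyg^{-1})\bigr) = \operatorname{rank}\bigl(g(\adj(x)y)g^{-1}\bigr) = \operatorname{rank}(\adj(x)y)$. As $y$ ranges over $M$, its image ranges over all of $M'$, so the two sets of attained ranks $\{\operatorname{rank}(\adj(x)y): y\in M\}$ and $\{\operatorname{rank}(\adj(x')y'): y'\in M'\}$ coincide. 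These are finite subsets of $\{0,1,\dots,n\}$, hence possess well-defined maxima and minima, which must therefore agree; this yields the claim for both the max and the min (in the min case restricting to nonzero $y$, which the bijection respects since it fixes $0$).

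The argument is essentially mechanical; the one point requiring care is the bookkeeping that $x$ and its conjugate $x' = gxg^{-1}$ are the correct corresponding elements, so that $\adj(x)$ genuinely intertwines with $\adj(x')$ rather than with the adjoint of some unrelated element. This is what makes the quantity an invariant attached to the pair (algebra, chosen element), and it is the step I would state most explicitly. No genuine obstacle arises beyond fixing this correspondence and pinning down the interpretation of $\max/\min$ as ranks of the bracket matrices.
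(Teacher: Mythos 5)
Your proof is correct: the equivariance identity $g[x,y]g^{-1} = [gxg^{-1}, gyg^{-1}]$, the similarity-invariance of rank, and the fact that $y \mapsto gyg^{-1}$ is a linear bijection of $M$ onto the conjugate algebra fixing $0$ together show that the two sets of attained ranks coincide, hence so do their maxima and minima. The paper states this lemma with no proof at all, so there is no authorial argument to compare against; your write-up supplies exactly the justification the paper leaves implicit (the same facts also underlie the paper's unproved Lemma~\ref{min/max rank invariance}), with the one point of care---that $\adj(x)$ must be intertwined with $\adj(gxg^{-1})$ and not with the adjoint of an unrelated element---correctly identified and handled. A marginal remark: for nonzero $x$, taking $y = x$ gives $\adj(x)x = 0$, so the minimum over all nonzero $y$ is trivially $0$; the sensible reading is the minimum over $y$ with $\adj(x)y \neq 0$, and your bijection argument covers that reading too, since conjugation maps $\{y \in M : [x,y] \neq 0\}$ onto the corresponding set in the conjugate algebra.
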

By lemma \ref{conjugacy of h_pq}, $H_{pq}$ doesn't conjugate to $H_{pn}$  when $i \neq n -1$. Similarly, $H_{pn}$ doesn't conjugate to $H_{p,n-1}$ when $i \neq n-1$ (we deal with this case at the end). Finally, in any Lie algebra with nilpotent part  $\s(E \setminus \{E_{n-1,n}\})$, the transposition matrix $\tau_{n-2,n-1}$ conjugates $H_{p,n}$ to $H_{p,n-1}$. 
Hence, we obtain the following classification theorem:
\begin{theorem} (Classification of algebras with nilpotent part $E \setminus \{E_{i,i+1}\}$
    Let $1 \leq i \leq n -2$ and $1 \leq k \leq n - 1$. Define:
\begin{equation*}
    L'_{i,pq} = \s(E \setminus \{E_{i,i+1}\} \cup H \setminus \{H_{p_1q_1},..., H_{p_t, q_t}\})
\end{equation*}
$L'_{i,pq}$ and $L
_{j,ab}$ are conjugate $\iff (i,p,q) = (j,a,b)$ (i.e. all the indices are the same). \\
$L'_{n-1,pq}$ and $L
_{j,ab}$ are conjugate $\iff j = n-1$ and $(p_k,q_k) = (a_k,b_k)$ or $p_k = a_k$, $q_k = n-2$ and $b_k = n -1$.
\end{theorem}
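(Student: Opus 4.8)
The plan is to split the index data into two parts that can be matched independently---the nilpotent part (the index $i$) and the removed Cartan directions (the pairs)---and to show each is a complete conjugacy invariant, up to the single controlled degeneracy that surfaces when $i = n-1$. First I would dispose of the nilpotent part: if $L'_{i,pq}$ and $L'_{j,ab}$ are conjugate, then by Corollary \ref{nilpotent algebras conjugate to nilpotent algebras} and Corollary \ref{nilpotent dimension} their maximal nilpotent subalgebras are conjugate, and these are exactly $N_{i,i+1} = \s(E \setminus \{E_{i,i+1}\})$ and $N_{j,j+1}$. By the Complete Classification Theorem for $\frac{n(n-1)}{2}-1$ nilpotent subalgebras, the $N_{i,i+1}$ lie in pairwise distinct conjugacy classes, which forces $i = j$; in particular, in the $i = n-1$ case this already delivers the requirement $j = n-1$. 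From here both algebras share the fixed nilpotent part $\s(E \setminus \{E_{i,i+1}\})$, and the problem reduces to matching the removed Cartan directions.

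\textbf{Generic case $i = j \le n-2$.} Here I would run the argument of Theorem \ref{conjugacy of l_ab}, but reading the adjoint-action tables for $\s(E \setminus \{E_{i,i+1}\})$. To each removed direction $H_{p_kq_k}$ one attaches the conjugation-invariant dimensions of $\C^n A_{p_kq_k}$ (row action) and $A_{p_kq_k}\C^n$ (column action). Lemma \ref{conjugacy when p is not equal to a} gives $\dim(\C^n A_{pq}) = n-(p-1)$, forcing $p = a$; Lemma \ref{conjugacy for the second case} gives $\dim(A_{pq}\C^n) = q$ for $q \le n-2$, forcing $q = b$. The only remaining ambiguities are $q \in \{n-1,n\}$: since $E_{n-1,n}$ survives when $i \le n-2$, the column action reads $A_{p,n-1}\C^n = \C^{n-1}$, separating $q = n-1$ from all $q \le n-2$, while Lemma \ref{conjugacy of h_pq} (applicable precisely because $i \ne n-1$) prevents $H_{p,n}$ from conjugating to $H_{p,n-1}$. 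Thus every pair is pinned down, the invariant rank profile forces the collection of removed pairs to agree, and we get $(i,p,q) = (j,a,b)$. The converse is trivial, since equal index data means the two subalgebras are literally identical.

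\textbf{Special case $i = n-1$.} Deleting $E_{n-1,n}$ alters the adjoint-action tables in exactly two places, and these produce the looser equivalence. First, the column action now reads $A_{p,n-1}\C^n = \C^{n-2}$, so this invariant of $H_{p,n-1}$ collides with that of $H_{p,n-2}$; this coincidence is \emph{not} a conjugacy, and I would rule it out with the finer invariant of Lemma \ref{min max rank of adjoint action}, since $H_{p,n-2}$ sits in Case $7$ (adjoint max rank $4$) whereas $H_{p,n-1}$ sits in Case $4$ (adjoint max rank $3$). Second, the obstruction of Lemma \ref{conjugacy of h_pq} to $H_{p,n} \leftrightarrow H_{p,n-1}$ vanishes, and in fact the transposition of the last two coordinates stabilizes $\s(E \setminus \{E_{n-1,n}\})$ (one checks it permutes the surviving $E_{kl}$ among themselves and never produces the excluded $E_{n-1,n}$) while sending $H_{p,n} = E_{pp}-E_{nn}$ to $H_{p,n-1} = E_{pp}-E_{n-1,n-1}$. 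This is exactly the extra identification in the statement. Rerunning the matching argument with this single merger built in yields the claimed ``if and only if,'' and the converse is realized by this explicit permutation; to keep the conjugator in $SL(n,\C)$ I would rescale it by a scalar $\lambda$ with $\lambda^n = -1$, which leaves the conjugation unchanged.

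\textbf{Main obstacle.} The generic case is essentially routine, mirroring the already-proved Theorem \ref{conjugacy of l_ab}. The delicate work is the $i=n-1$ bookkeeping: one must verify, across all eight cases of Tables \ref{tab:adjoint action of h_Pq on one less than full, cases 1-3}--\ref{tab:adjoint action of h_Pq on one less than full, case 8}, which of the coincidences among $q \in \{n-2,n-1,n\}$ are genuine conjugacies and which are spurious---here $n \leftrightarrow n-1$ is real but $n-2 \leftrightarrow n-1$ is not---and to confirm that the transposition conjugation stabilizes the nilpotent part and induces precisely the claimed identification and no further collisions. Getting this edge-case analysis exactly right, rather than the underlying invariant matching, is where the real difficulty lies.
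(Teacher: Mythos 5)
Your proposal follows essentially the same route as the paper: force $i=j$ via the classification of the nilpotent parts $N_{i,i+1}$, pin down $p$ by the row action and $q\le n-2$ by the column action of the adjoint matrices $A_{pq}$, handle $q\in\{n-1,n\}$ separately (Lemma \ref{conjugacy of h_pq} when $i\ne n-1$; the max-rank invariant of Lemma \ref{min max rank of adjoint action} to separate $H_{p,n-2}$ from $H_{p,n-1}$ when $i=n-1$), and realize the one genuine identification $H_{p,n}\leftrightarrow H_{p,n-1}$ by an explicit transposition. Your two small departures are actually refinements: the transposition swapping the last two coordinates, $\tau_{n-1,n}$, is the correct conjugator (the paper's $\tau_{n-2,n-1}$ fixes $E_{nn}$ and so cannot send $H_{p,n}$ to $H_{p,n-1}$ --- evidently an index slip), and your scalar rescaling by $\lambda$ with $\lambda^n=-1$ to land in $SL(n,\C)$ supplies a determinant check the paper silently omits.
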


\section{Some Classifications for $\mathfrak{sl}(n, \mathbb{C})$ }
We classify upper-triangular regular sub-algebras of $\mathfrak{sl}(n,\C)$ spanned by some elements of $E$ and $H$.
\subsection{Classification of $\frac{n(n+1)}{2} - 1$ dimensional upper-triangular regular subalgebras}
\begin{theorem} (Classification of $\frac{n(n+1)}{2} - 1$ dimensional upper-triangular regular subalgebras) There are $2n-2$ regular subalgebras of dimension $\frac{n(n+1)}{2} - 1$ in $\mathfrak{sl}(n, \mathbb{C})$ spanned by elements of $E$ and $H$.
\\There are $n-1$ subalgebras without $E_{i,i+1}$.
$$L_{i,i+1} = \s{((E \setminus \{E_{i,i+1})\cup H\}}$$
for $1 \leq i \leq n -1$. \\ There are $n-1$ subalgebras without $H_i$: $$L_i = \s{(E \cup (H - H_i)\}}$$
for $1 \leq i \leq n -1$. The $\{L_i: 1 \leq i \leq n-1\}$ and $\{L_{i,i+1}: 1 \leq i \leq n-1\}$ are in separate conjugacy classes.
\label{full classification}
\end{theorem}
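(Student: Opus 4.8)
The plan is to decouple the statement into two parts: an enumeration establishing that the only subalgebras of this dimension spanned by coordinate vectors are the listed $L_i$ and $L_{i,i+1}$, and a conjugacy argument separating the two families. Since $E\cup H$ is a linearly independent basis of the full regular algebra $\s(E\cup H)$, any subalgebra spanned by elements of $E$ and $H$ whose dimension is one less must be $\s((E\cup H)\setminus\{v\})$ for a single basis vector $v$. So the whole enumeration reduces to deciding, case by case on the type of $v$, when this one-element deletion remains closed under the Lie bracket.

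I would split on the three possible types of $v$. If $v=H_i$, Lemma~\ref{remove h_i} guarantees closure, producing the $n-1$ algebras $L_i$. If $v=E_{i,i+1}$ is a superdiagonal entry, then the Corollary on removal of off-diagonal elements --- equivalently, the degenerate case $j=i+1$ of Lemma~\ref{lemma 1}, in which there is no index $k$ with $i<k<i+1$ --- shows the deletion is again a subalgebra, producing the $n-1$ algebras $L_{i,i+1}$. The remaining case is $v=E_{kl}$ with $l>k+1$; here I claim the deletion is never closed. By the bracket relation \eqref{Lie 1} we have $[E_{km},E_{ml}]=E_{kl}$ for every $m$ with $k<m<l$, and since only $E_{kl}$ itself was removed both $E_{km}$ and $E_{ml}$ survive, so their bracket escapes the span. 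Equivalently, Lemma~\ref{lemma 1} forces deleting at least one further basis vector to restore closure, which would drop the dimension too far. Summing the admissible cases gives exactly $(n-1)+(n-1)=2n-2$ subalgebras.

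For the conjugacy claim I would use the nilpotent dimension as the separating invariant (Corollary~\ref{nilpotent dimension}). Each $L_{i,i+1}$ has maximal nilpotent subalgebra $\s(E\setminus\{E_{i,i+1}\})$ of nilpotent dimension $\frac{n(n-1)}{2}-1$, whereas each $L_i$ keeps the full nilpotent part $\s(E)$ of nilpotent dimension $\frac{n(n-1)}{2}$. Because conjugation preserves this dimension, no $L_i$ is conjugate to any $L_{j,j+1}$, so the two families occupy separate conjugacy classes. Although the statement only asks for separation of the families, the finer within-family distinctness also follows from earlier results: the $L_{i,i+1}$ are pairwise non-conjugate because their nilpotent parts $N_{i,i+1}$ are (by the classification of $\frac{n(n-1)}{2}-1$ nilpotent subalgebras together with Corollary~\ref{nilpotent algebras conjugate to nilpotent algebras}), and the $L_i$, viewed as $\s(E\cup H)$ with the single Cartan element $H_{i,i+1}=H_i$ deleted, are pairwise non-conjugate by Theorem~\ref{conjugacy of l_ab}.

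The one genuinely delicate step is the closure analysis for a single deletion: one must see that removing a non-superdiagonal $E_{kl}$ necessarily breaks closure because surviving brackets regenerate it, and this is exactly what pins the admissible deletions down to the superdiagonal entries and the diagonal generators. Everything after that is counting and an appeal to the invariance lemmas, so I expect no further obstruction.
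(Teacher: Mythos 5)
Your proposal is correct and follows essentially the same route as the paper: reduce to deleting a single element of $E\cup H$, handle $H_i$ via Lemma~\ref{remove h_i}, restrict nilpotent deletions to the superdiagonal via Lemma~\ref{lemma 1} (your explicit bracket $[E_{km},E_{ml}]=E_{kl}$ is just that lemma's content spelled out), and separate the two families by the nilpotent-dimension invariant of Corollary~\ref{nilpotent dimension}. The within-family distinctness you add is extra beyond what the statement asks, and the paper defers it to later results exactly as you cite them.
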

\begin{proof} As $\frac{n(n+1)}{2} = |E| + |H|$, obtaining an $\frac{n(n+1)}{2} - 1$ upper-triangular regular subalgebra means we remove one of the elements from $E \cup H$ and check whether it is still an algebra. We can remove any $H_i$ and maintain closure under the Lie bracket by lemma \ref{remove h_i}. There are $n-1$ such $H_i$'s, therefore, each $L_i$ is a subalgebra. To remove $E_{ij}$ from $E$ we must remove at least $j - i$ elements to maintain closure by lemma \ref{lemma 1}. As we can only remove one element, this means we need $j - i = 1 \implies j = i + 1$. These are precisely the elements on the off-diagonal, and so $L_{i,i+1}$ are the only subalgebras obtained by removing an $E_{i,j}$, and there are $n -1$ of these. Therefore, we have $2n-2$  $\frac{n(n+1)}{2} - 1$ dimensional upper-triangular regular algebras. Finally, as $L_i$ and $L_{i,i+1}$ have differing nilpotent dimension, they are in separate conjugacy classes by corollary \ref{nilpotent dimension} ($L_i$ has nilpotent dimension $\frac{n(n-1)}{2}$ while $L_{i,i+1}$ has nilpotent dimension $\frac{n(n-1)}{2}-1$).
\end{proof}

\begin{lemma}
    Let $T_{i,i+1}^{[1]}$ denote the star matrix of the maximal nilpotent subalgebra of $L_{i,i+1}$. Let $v = {\C^n}_0$. Fix $i$. For all $j < i$, ${T_{j,j+1}}^{n-i} v = {\C^n}_i$ and ${T_{i,i+1}}^{n-i} v = {\C^n}_{i-1}$. A bit more abstractly, the ${n - i}^{th}$ term in the derived algebra of $T_{i,i+1}$, i.e., ${T_{i,i+1}}^{n-i}$'s left action on $\C^{n}$, sends it to $\C^{i - 1}$, an $i - 1$ dimensional subspace, and the ${n - i}^{th}$ term in the derived algebra of $T_{j,j+1}$, i.e., ${T_{j,j+1}}^{n-i}$ left action on $\C^{n}$, sends it to $\C^{i}$, an $i$ dimensional subspace. 
    \label{Nilpotent inductive classification}
\end{lemma}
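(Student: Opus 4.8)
The plan is to convert the matrix power $T_{i,i+1}^{\,n-i}$ into a statement about increasing chains and then simply read off which rows are nonzero. By Observation \ref{multiplication by star matrix}, multiplying star matrices records paths: the $(a,b)$ entry of $T_{j,j+1}^{\,n-i}$ carries a $\ast$ exactly when there is a strictly increasing chain $a = c_0 < c_1 < \cdots < c_{n-i} = b$ all of whose steps $(c_s, c_{s+1})$ are allowed, i.e. none of them is the deleted off-diagonal entry $(j,j+1)$. Since the left action on a generic $v$ (full support) makes a coordinate $(T_{j,j+1}^{\,n-i} v)_a$ potentially nonzero precisely when row $a$ of $T_{j,j+1}^{\,n-i}$ is nonzero, the whole problem reduces to the combinatorial question: for which starting indices $a$ does an allowed length-$(n-i)$ chain fit inside $\{1,\dots,n\}$? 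I would answer this by computing, for each row $a$, the minimal endpoint reachable in exactly $n-i$ steps.

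The key quantity is the smallest $b$ reachable from $a$ in $n-i$ steps while avoiding the forbidden edge. Without any deletion this minimum is $a+(n-i)$, attained only by the consecutive chain $a, a+1, \dots, a+(n-i)$, which is the unique minimizer because $n-i$ steps of size $\ge 1$ summing to $n-i$ force every step to have size $1$. Deleting the single edge $(j,j+1)$ matters only if this consecutive chain actually traverses it, i.e. if $a \le j \le a+(n-i)-1$; in that event the cheapest repair is the detour $j \to j+2$, which costs one extra unit and raises the minimal endpoint to $a+(n-i)+1$. Thus row $a$ survives iff this minimal endpoint is $\le n$, and I would record this as a short case split before applying it.

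For $T_{j,j+1}$ with $j<i$, the decisive row is $a=i$: its consecutive chain is $i, i+1, \dots, n$, which ends exactly at $n$ and avoids $(j,j+1)$ since $j<i$, so row $i$ is nonzero; every row $a\le i$ likewise reaches $\le n$ (even after a detour, since $a+(n-i)+1\le n$ for $a\le i-1$), while every row $a>i$ has minimal endpoint $\ge a+(n-i)>n$ and dies. Hence the nonzero coordinates are exactly $1,\dots,i$, giving $T_{j,j+1}^{\,n-i} v = {\C^n}_i$. For $T_{i,i+1}$ (the case $j=i$) the same chain from $a=i$ now begins with the deleted edge $(i,i+1)$, forcing the detour and pushing its minimal endpoint to $n+1>n$, so row $i$ dies; rows $a<i$ still reach $\le n$ and rows $a>i$ still overshoot, leaving exactly the coordinates $1,\dots,i-1$ and giving $T_{i,i+1}^{\,n-i} v = {\C^n}_{i-1}$. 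The dimension statements are immediate because in each case the surviving coordinates form a prefix, so the image is the coordinate subspace $\C^{i}$ (resp. $\C^{i-1}$).

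The main obstacle is the boundary row $a=i$: everything hinges on tracking exactly how deleting one off-diagonal entry perturbs the minimal reachable endpoint, and on verifying that this perturbation is a shift by exactly one that is felt only at that critical row. An alternative, more computational route is a single-step induction using that $T_{j,j+1}\,{\C^n}_m = {\C^n}_{m-1}$ unless $j=m-1$, in which case $T_{j,j+1}\,{\C^n}_m = {\C^n}_{m-2}$; iterating from the full vector ${\C^n}_n$ then lowers the height by one per step except for a single ``skip'' of the value $j$, and one checks that after $n-i$ steps the height is $i$ when $j<i$ and $i-1$ when $j=i$. I would keep the chain argument as the primary proof since it isolates the boundary phenomenon most cleanly, and I would be careful throughout that $T_{i,i+1}^{\,n-i}$ denotes the ordinary matrix power (the lower-central-series term) rather than a literal derived-series term of the form $N^{2^k}$.
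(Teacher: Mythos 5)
Your proposal is correct, but your primary argument is genuinely different from the paper's. The paper proves the lemma by downward induction on $i$: the base case checks $T_{n-1,n}v = {\C^n}_{n-2}$ while $T_{j,j+1}v = {\C^n}_{n-1}$ for $j<n-1$, and the inductive step applies the star matrix once more, noting that a single application of $T_{j,j+1}$ lowers ${\C^n}_m$ to ${\C^n}_{m-1}$ except when the deleted entry $(j,j+1)$ is the pivotal $(m-1,m)$ position, where two extra zeros appear at once — this is exactly the ``alternative, more computational route'' you sketch in your final paragraph, so your backup argument essentially \emph{is} the paper's proof. Your main route instead reads the $(a,b)$ entry of $T_{j,j+1}^{\,n-i}$ as the existence of a strictly increasing chain of length $n-i$ from $a$ to $b$ avoiding the forbidden edge, and computes the minimal reachable endpoint ($a+(n-i)$, raised to $a+(n-i)+1$ precisely when the unique consecutive chain traverses $(j,j+1)$); this is a direct, non-inductive argument the paper does not give. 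What it buys is a closed-form description of which rows of \emph{every} power survive, and a clean isolation of why only the boundary row $a=i$ distinguishes $j=i$ from $j<i$; what the paper's induction buys is brevity and reusability, since it is the template invoked ``analogously'' for Lemmas \ref{N-ij classification}, \ref{R_i conjugation}, and \ref{C_i conjugation}. Two of your side remarks are worth retaining explicitly: the lemma's $v = {\C^n}_0$ must be read as a generic full-support vector (your ``full support'' hypothesis silently repairs what is presumably a typo for ${\C^n}_n$, as the paper's own base case confirms), and $T^{\,n-i}$ is an ordinary matrix power — a lower-central-series term — rather than the derived-series term $N^{2^k}$ of Observation \ref{multiplication by star matrix}, so the paper's phrase ``derived algebra'' is loose in exactly the way you flag.
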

    \begin{proof}
     Induction on $i$. For $i = n-1$, $T_{n-1, n}v  = {\C^n}_{n-2}$. For $j < n - 1$, $T_{j, j+1} ={\C^n}_{n-1}$. So, $T_{n-1,n}$ sends $\C^{n}$ to ${\C^{n}}_{n-2}$ and for all $j < n - 1$, $T_{j,j+1}$ sends $\C^n$ to ${\C^{n}}_{n-1}$.  Suppose claim holds for $i \leq n - 1$. Then: ${T_{i,i+1}}^{n- i}v = {\C^n}_{i-1}$, where ${T_{i,i+1}}^{n- i}$ sends $\C^n$ to ${\C^{n}}_{i - 1}$ and that for $j < i$,  $ T_{j,j+1}^{n- i}v = {\C^n}_{i}$ where  ${T_{j,j+1}}^{n-i}$ sends $\C^n$ to ${\C^n}_{i}$. In particular, ${T_{i -1, i}}^{n -i}v = {\C^n}_{i}$. Then, ${T_{i -1, i}}^{n -(i - 1)} v = {\C^n}_{i-2}$. We get two more zeros as the ${i-1}^{th}$ and $i^{th}$ rows have a zero in the first $i$ columns.  For $j < i - 1$, ${T_{j, j+1}}^{n-(i-1)} v = T_{j, j+1} ({\C^n}_i) = {\C^n}_{i-1}$. Thus, ${T_{i-1,i}}^{n-(i-1)}$ sends $\C^n$ to ${\C^n}_{i - 2}$ and for $j < i - 1$,  ${T_{j-1,j}}^{n-(i-1)}$ sends $\C^n$ to ${\C^n}_{i - 1}$. The result follows by induction.    
\end{proof}

\begin{theorem} (Complete Classification of $\frac{n(n-1)}{2} - 1$ nilpotent upper-triangular regular subalgebras)
    Let the maximal nilpotent subalgebra of each $L_{i,i+1}$ be denoted by $N_{i,i+1} = \{E_{ij} \in L_{i,i+1}\}$. Then $\{N_{i,i+1}\}$ are all in distinct conjugacy classes.
\label{N_i theorem}
\end{theorem}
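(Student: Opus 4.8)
The plan is to separate the conjugacy classes by a single numerical invariant, namely the column rank of a fixed matrix power of the star matrix, whose value Lemma~\ref{Nilpotent inductive classification} has already pinned down for every member of the family. First I would reduce to comparing two members: take distinct indices and label the larger one $i$ and the smaller one $j$, so that $j < i$. The quantity I would track is the \emph{column rank}, i.e. the dimension of $\operatorname{span}\{Av : A \in T_{k,k+1}^{\,n-i},\ v \in \C^n\}$, of the $(n-i)$-th matrix power of the star matrix, evaluated for $k = i$ and for $k = j$.

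Next I would verify that this quantity is a conjugacy invariant, which is the one step that needs genuine justification: Corollary~\ref{Derived Series Left and row rank} is phrased for the derived series (indices that are powers of two) rather than for the arbitrary power $n-i$ that the inductive lemma produces. Suppose $g \in SL(n,\C)$ conjugates $N_{i,i+1}$ to $N_{j,j+1}$. Since $gA_1g^{-1}\cdots gA_m g^{-1} = g(A_1\cdots A_m)g^{-1}$, the same $g$ carries the span of $m$-fold products represented by $T_{i,i+1}^{\,m}$ to the one represented by $T_{j,j+1}^{\,m}$. For any subspace $\mathcal S$ of matrices one has $\operatorname{span}\{(gAg^{-1})v : A \in \mathcal S,\ v \in \C^n\} = g\cdot \operatorname{span}\{Au : A \in \mathcal S,\ u \in \C^n\}$, because $g^{-1}$ permutes $\C^n$ bijectively and $g$ is invertible; hence the two spans have equal dimension. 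This is exactly the extension of Lemma~\ref{acting on column vectors lemma} from an algebra to the $m$-th matrix power of its star matrix, and I would apply it with $m = n-i$.

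Finally I would invoke Lemma~\ref{Nilpotent inductive classification} with this choice of $i$. It gives $T_{i,i+1}^{\,n-i} v = {\C^n}_{i-1}$, so the column rank of $T_{i,i+1}^{\,n-i}$ equals $i-1$, while for the smaller index $j < i$ it gives $T_{j,j+1}^{\,n-i} v = {\C^n}_{i}$, so the column rank of $T_{j,j+1}^{\,n-i}$ equals $i$. As $i-1 \neq i$, the invariant separates $N_{i,i+1}$ from $N_{j,j+1}$, contradicting the assumed conjugacy. Since the pair of indices was arbitrary, all of $\{N_{i,i+1} : 1 \le i \le n-1\}$ lie in distinct conjugacy classes.

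I expect the crux to be the invariance step: confirming that column rank survives conjugation for the specific power $n-i$ (which is not a derived-series index) via the short product-commutation argument above, and being careful to match the power to the \emph{larger} of the two indices so that the two computed dimensions come out as the consecutive integers $i-1$ and $i$. Everything else is a direct appeal to the already-established inductive computation in Lemma~\ref{Nilpotent inductive classification}.
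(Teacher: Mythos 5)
Your proposal is correct and takes essentially the same route as the paper: both arguments separate the classes by the column rank of the $(n-i)$-th power of the star matrix, computed via Lemma~\ref{Nilpotent inductive classification} to be $i-1$ for $N_{i,i+1}$ versus $i$ for $N_{j,j+1}$ with $j<i$, and conclude by conjugation-invariance of that rank. The only place you go beyond the paper is the invariance step -- the paper simply cites Corollary~\ref{Derived Series Left and row rank}, which is stated for derived-series terms (powers $2^k$ of the star matrix, per Observation~\ref{multiplication by star matrix}), whereas your explicit product-commutation argument justifies invariance for the arbitrary power $n-i$, quietly patching a small imprecision in the paper's own citation.
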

\begin{proof}
Let the nilpotent subalgebra of each $L_{i,i+1}$ be denoted by $N_{i,i+1} = \{E_{ij} \in L_{i,i+1}\}$. We can represent them as star matrices:
There is a 0 at the $i,i +1$th position, and stars everywhere in the strictly upper triangular part. As nilpotent elements conjugate to nilpotent elements, we need only show that none of these maximal nilpotent subalgebras conjugate to each other, and so the $\{L_{i,i+1}\}$ are in different conjugacy classes. By corollary \ref{Derived Series Left and row rank} we know that the left and row rank of the derived series is invariant under conjugation, and so by lemma \ref{Nilpotent inductive classification} we have that the each $N_{i,i+1}$ is not conjugate to any $N_{j,j+1}$ for $j < i$, and so inductively, all of the $\{N_{i,i+1}\}$ are in separate conjugacy classes. 
\end{proof}

\begin{corollary}
    $\{L_{i,i+1}\}$ defined in Theorem \ref{full classification} are in different conjugacy classes.
    \label{L_i lemma}
\end{corollary}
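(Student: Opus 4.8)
The plan is to obtain this as an essentially immediate consequence of Theorem \ref{N_i theorem}, by reducing conjugacy of the solvable algebras $L_{i,i+1}$ to conjugacy of their maximal nilpotent subalgebras. Suppose toward a contradiction that $L_{i,i+1}$ and $L_{j,j+1}$ are conjugate by some $g \in \mathrm{SL}(n,\C)$ with $i \neq j$. I would first argue that $g$ must carry the maximal nilpotent subalgebra $N_{i,i+1}$ of $L_{i,i+1}$ onto the maximal nilpotent subalgebra $N_{j,j+1}$ of $L_{j,j+1}$, and then invoke Theorem \ref{N_i theorem} to rule this out.

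The first step is to verify that conjugation preserves the maximal nilpotent subalgebra. Conjugation by $g$ is a Lie-algebra isomorphism $L_{i,i+1} \to L_{j,j+1}$, and by Lemma \ref{nilpotent conjugacy} it sends nilpotent elements to nilpotent elements; by Corollary \ref{nilpotent algebras conjugate to nilpotent algebras} the image $g\,N_{i,i+1}\,g^{-1}$ is a nilpotent subalgebra of $L_{j,j+1}$. Using Corollary \ref{nilpotent dimension}, this image has the same dimension as $N_{i,i+1}$, namely $\tfrac{n(n-1)}{2}-1$, which is exactly the nilpotent dimension of $L_{j,j+1}$; hence the image is forced to be the full maximal nilpotent subalgebra $N_{j,j+1}$. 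This shows $N_{i,i+1}$ and $N_{j,j+1}$ are conjugate. The second step is then immediate: Theorem \ref{N_i theorem} states that the family $\{N_{i,i+1}\}$ lies in pairwise distinct conjugacy classes, contradicting $N_{i,i+1}\sim N_{j,j+1}$ for $i\neq j$. Therefore no two distinct $L_{i,i+1}$ can be conjugate.

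The only substantive point beyond citing the theorem is the first step — confirming that $g$ sends the maximal nilpotent subalgebra of $L_{i,i+1}$ to \emph{that} of $L_{j,j+1}$, and not merely to some nilpotent subalgebra sitting inside it. The reason to be careful is that the maximal nilpotent subalgebra is an intrinsic invariant of the abstract Lie algebra (the subalgebra spanned by its nilpotent elements), so it is automatically preserved by any Lie-algebra isomorphism; the dimension count supplied by Corollary \ref{nilpotent dimension} is what pins the image down as the whole of $N_{j,j+1}$ rather than a proper piece. I expect this to be the main (and only) obstacle, and it is mild: once the reduction is justified, the conclusion is a one-line appeal to Theorem \ref{N_i theorem}.
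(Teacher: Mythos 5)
Your proposal is correct and follows essentially the same route as the paper: the paper's proof likewise cites Lemma \ref{nilpotent conjugacy} to conclude that maximal nilpotent subalgebras of conjugate algebras are conjugate, and then invokes Theorem \ref{N_i theorem}. Your only addition is making explicit the dimension-count argument (via Corollary \ref{nilpotent dimension}) pinning the image down to the \emph{full} maximal nilpotent subalgebra, a step the paper leaves implicit.
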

\begin{proof}
    By lemma \ref{nilpotent conjugacy} we know that the maximal nilpotent Lie algebras of conjugate algebras are conjugate, but by theorem \ref{N_i theorem}, none of the $N_{i,i+1}$ algebras are conjugate. Therefore,  $\{L_{i,i+1}\}$ are in different conjugacy classes.
\end{proof}

\begin{lemma} $\{L_{i}\}$ defined in Theorem \ref{full classification} are in separate conjugacy classes. Thus, they are in singleton classes.
\label{conjugacy for h_i}
\end{lemma}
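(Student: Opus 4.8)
The plan is to reduce this statement to the already-established rigidity of diagonal root vectors under full nilpotent basis. First I would observe that each $L_i = \s(E \cup (H \setminus \{H_i\}))$ is literally an instance of the family classified in Theorem \ref{conjugacy of l_ab}: since $H_i = E_{ii} - E_{i+1,i+1} = H_{i,i+1}$, removing $H_i$ is the same as removing the single diagonal element $H_{i,i+1}$, so $L_i$ is the algebra $L_{(i,i+1)}$ with $t = 1$. Theorem \ref{conjugacy of l_ab} then gives $L_i$ conjugate to $L_j$ if and only if $(i,i+1) = (j,j+1)$, i.e. if and only if $i = j$. This already yields that the $\{L_i\}$ are pairwise non-conjugate.

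If a self-contained argument is wanted rather than a citation of Theorem \ref{conjugacy of l_ab}, I would argue directly as follows. All the $L_i$ share the full nilpotent basis $E$, so they have equal nilpotent dimension $\frac{n(n-1)}{2}$ and the invariant of Corollary \ref{nilpotent dimension} is useless; the distinction must come from the diagonal generators. Suppose $g \in SL(n,\C)$ conjugates $L_i$ to $L_j$. By Lemma \ref{H_i exact conjugation} (applicable since the nilpotent basis is exactly $E$), each simple coroot $H_k \in L_i$ is sent to a single root vector $H_{a_k b_k}$ lying in $L_j$. Because conjugate elements have conjugate adjoint actions on the common nilpotent part $\s(E)$, the row- and column-rank invariants recorded before Lemma \ref{conjugacy of H for full nilpotent basis} (namely $\C^n A_{pq} = \C^{\,n-(p-1)}$ and, for $q<n$, $A_{pq}\C^n = \C^{q}$) force $a_k = k$ and $b_k = k+1$; the boundary value $k = n-1$, where $q = n$, is pinned by Lemma \ref{conjugacy of h_pq}, which forbids any $H_{p,n}$ from conjugating to a lower $H_{pq}$. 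Hence $g H_k g^{-1} = H_{k,k+1} = H_k$ for every $k \neq i$.

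I would then finish with a linear-independence contradiction. The image algebra $g L_i g^{-1} = L_j$ therefore contains each $H_k$ with $k \neq i$; but its diagonal part is the $(n-2)$-dimensional space $\s(\{H_k : k \neq j\})$, and the $H_k$ are linearly independent. If $i \neq j$ then $j \neq i$, so $H_j$ is among the $\{H_k : k \neq i\}$ and hence lies in $L_j$, contradicting $H_j \notin \s(\{H_k : k \neq j\})$. Thus $i = j$, and the $\{L_i\}$ lie in distinct conjugacy classes. For the final ``singleton'' claim I would combine this with Theorem \ref{full classification}: the $\{L_i\}$ and the $\{L_{i,i+1}\}$ already lie in separate classes by their differing nilpotent dimension, and the $\{L_{i,i+1}\}$ are mutually non-conjugate by Corollary \ref{L_i lemma}; since these $2n-2$ algebras exhaust all regular subalgebras of dimension $\frac{n(n+1)}{2}-1$, each one is alone in its class, so each $L_i$ forms a singleton class within the classified family.

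The main obstacle is the rigidity step of the second paragraph: making precise that each individual generator $H_k$, not merely the diagonal subspace as a whole, must map to a single root vector with indices forced by the adjoint rank data, together with the careful separate treatment of the edge case $q = n$ (the generator $H_{n-1}$) via Lemma \ref{conjugacy of h_pq}. Once the image of every simple coroot is identified as itself, the contradiction is immediate.
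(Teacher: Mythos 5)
Your proposal is correct and takes essentially the same route as the paper, whose entire proof is the one-line citation of Theorem \ref{conjugacy of l_ab} after the identification $H_i = H_{i,i+1}$ that your first paragraph spells out. Your ``self-contained'' second argument is not a different method but an unpacking of that theorem's own machinery (Lemma \ref{H_i exact conjugation}, the row/column-rank invariants of the adjoint action, and Lemma \ref{conjugacy of h_pq} for the $q=n$ edge case), so no comparison beyond this is needed.
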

\begin{proof}
The result follows from theorem \ref{conjugacy of l_ab}.
\end{proof}
\begin{theorem} (Conjugacy Classes of $\frac{n(n+1)}{2} - 1$ dimensional upper-triangular regular subalgebras) All $\frac{n(n+1)}{2} - 1$ dimensional subalgebras are in distinct conjugacy classes. They are as follows:\\
There are $n-1$ subalgebras without $E_{i,i+1}$.
$$L_{i,i+1} = \s(E \setminus \{E_{i,i+1}\}\cup H)$$
for $1 \leq i \leq n -1$.\\
There are $n-1$ subalgebras without $H_i$.
$$L_i = \s (E \cup H \setminus \{H_i\})$$
for $1 \leq i \leq n -1$.
\label{complete class of one less than full}
\end{theorem}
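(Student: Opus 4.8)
The plan is to assemble the statement from structural facts already established, since Theorem \ref{full classification} has enumerated the $2n-2$ candidate subalgebras and the only remaining task is to check that no two of them are conjugate. I would organize the argument around the coarse invariant that separates the two families $\{L_{i,i+1}\}$ and $\{L_i\}$, and then handle each family internally.

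First I would invoke Corollary \ref{nilpotent dimension}: conjugate subalgebras share the same nilpotent dimension. Each $L_i = \s(E \cup H \setminus \{H_i\})$ retains the full nilpotent basis $E$, so its nilpotent dimension is $\frac{n(n-1)}{2}$, whereas each $L_{i,i+1} = \s(E \setminus \{E_{i,i+1}\} \cup H)$ has nilpotent dimension $\frac{n(n-1)}{2}-1$. Hence no $L_i$ is conjugate to any $L_{j,j+1}$, and the two families occupy disjoint sets of conjugacy classes; this is precisely the final sentence of Theorem \ref{full classification}.

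Next I would dispatch the two families separately. Within $\{L_{i,i+1}\}$, Corollary \ref{L_i lemma} already asserts pairwise non-conjugacy. Its content rests on Theorem \ref{N_i theorem}, which distinguishes the maximal nilpotent parts $N_{i,i+1}$ via the column/row ranks of the derived series: Lemma \ref{Nilpotent inductive classification} computes that ${T_{i,i+1}}^{n-i}$ collapses $\C^n$ to an $(i-1)$-dimensional subspace while ${T_{j,j+1}}^{n-i}$ for $j<i$ collapses it only to an $i$-dimensional subspace. Combined with the invariance of these ranks under conjugation (Corollary \ref{Derived Series Left and row rank}) and the fact that the nilpotent parts of conjugate algebras are themselves conjugate (Lemma \ref{nilpotent conjugacy}), this separates the $N_{i,i+1}$, hence the $L_{i,i+1}$. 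Within $\{L_i\}$, Lemma \ref{conjugacy for h_i} already gives pairwise non-conjugacy, descending from Theorem \ref{conjugacy of l_ab} applied with a single removed $H$-element: the row action of the adjoint map detects the index $p=i$ and so no two distinct $L_i$ coincide up to conjugacy.

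Finally I would combine the three facts: the two families are separated by nilpotent dimension, and each family is internally fully separated, so all $2n-2$ subalgebras lie in distinct conjugacy classes. Since Theorem \ref{full classification} guarantees these are \emph{all} the subalgebras of this dimension, the classification is complete. The proof therefore presents essentially no new obstacle — every ingredient has been proved upstream — and the heavy lifting (the derived-series rank computation of Lemma \ref{Nilpotent inductive classification} and the adjoint-action analysis behind Theorem \ref{conjugacy of l_ab}) is already done. The only point requiring care is to confirm that the invariants used for the two families are compatible, so that the cross-family and intra-family arguments leave no gap; here the nilpotent-dimension split makes the cross-family case immediate, so there is nothing delicate to reconcile.
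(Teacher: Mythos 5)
Your proposal is correct and takes essentially the same route as the paper: the paper states this theorem with no separate proof precisely because it is the assembly of Theorem \ref{full classification} (nilpotent-dimension separation of the families $\{L_i\}$ and $\{L_{i,i+1}\}$), Corollary \ref{L_i lemma} (pairwise non-conjugacy of the $L_{i,i+1}$ via Theorem \ref{N_i theorem} and Lemma \ref{Nilpotent inductive classification}), and Lemma \ref{conjugacy for h_i} (pairwise non-conjugacy of the $L_i$ via Theorem \ref{conjugacy of l_ab}) --- the exact chain of citations you give. Your explicit write-up of that assembly is accurate and complete.
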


\subsection{Classification of Codimension 2 subalgebras}
\begin{theorem} 
(Classification of $\frac{n(n+1)}{2} - 2$ dimensional upper-triangular regular subalgebras) There are $2n^2 - 3n - 1$ upper-triangular regular subalgebras of dimension $\frac{n(n+1)}{2} - 2$ in $\mathfrak{sl}(n, \mathbb{C})$. The algebras are of the form:\\
\textbf{Without 2 $H_i$:} There are $\frac{(n-1)(n-2)}{2}$ of these, and they are of the form:
$$P_{ij} = \s(E \cup H \setminus \{H_i, H_j\})$$ where $i, j \in \{1,...,n-1\}$.\\
\textbf{Without $E_{ij}$ and $H_i$:} There are $(n-1)(n-1)$ of these, and they are of the form:
$$M_{ij} = \s((E \setminus \{E_{i,i+1}\}) \cup (H \setminus \{H_j\}))$$ where $i, j \in \{1,...,n-1\}$ and $i < j$.\\
\textbf{Without 2 $E_{ij}$:} There are $\frac{n^2 + n - 6}{2}$ of these, and they are two kinds.\\
There are $\frac{(n-1)(n-2)}{2}$ of these:
$$N_{ij} = \s(E \setminus \{E_{i,i+1}, E_{j,j+1}\} \cup H)$$ where $i, j \in \{1,...,n-1\}$. We choose the ordering $i < j$ to make it consistent.\\
There are $2(n-2)$ of these:
$$N_{R_{i}} = \s((E \setminus \{E_{i,i+2}, E_{i,i+1}\})\cup H)$$
$$N_{C_{i}} = \s((E \setminus \{E_{i,i+2}, E_{i+1, i+2}\})\cup H)$$
for $1 \leq i \leq n - 2$.\\
Additionally, none of the $P, N, M$ subalgebras are conjugate to each other.
\label{subalgebras of 2 dimension less}
\end{theorem}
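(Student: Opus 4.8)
The plan is to separate the statement into two logically independent tasks: an exhaustive enumeration that simultaneously produces the list of families and the count $2n^2-3n-1$, and a short invariant computation that shows the three families $P$, $M$, $N$ lie in disjoint sets of conjugacy classes. Since $|E\cup H| = \frac{n(n+1)}{2}$, every codimension-2 regular subalgebra arises by deleting exactly two elements of $E\cup H$ and demanding that the remainder stay closed under the bracket, so the whole enumeration is a question of which deletions preserve closure. I would organize the case analysis by how the two deleted elements split between the diagonal part $H$ and the nilpotent part $E$.

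First I would handle the two all-diagonal case: by Lemma \ref{remove h_i} removing any $H_i$ preserves closure, so every unordered pair $\{H_i,H_j\}$ is admissible, giving the $\binom{n-1}{2}=\frac{(n-1)(n-2)}{2}$ algebras $P_{ij}$. Next, deleting one $H$ and one $E$: removing $H_j$ costs nothing, but Lemma \ref{lemma 1} forces at least $l-k-1$ further deletions inside $E$ when $E_{kl}$ is removed; with no remaining budget this forces $l=k+1$, so the deleted nilpotent element must be an off-diagonal $E_{k,k+1}$. The two choices being independent yields $(n-1)^2$ algebras $M_{kj}$. Finally, deleting two elements of $E$: an off-diagonal $E_{k,k+1}$ is never the bracket of two strictly upper-triangular elements, so deleting two distinct off-diagonals is always admissible and gives $\binom{n-1}{2}$ algebras $N_{ij}$; deleting some $E_{i,i+2}$ forces, again by Lemma \ref{lemma 1}, the deletion of exactly one of $E_{i,i+1}$ or $E_{i+1,i+2}$, producing $N_{R_i}$ and $N_{C_i}$ for $1\le i\le n-2$; and any $E_{kl}$ with $l-k\ge 3$ forces at least two additional deletions and is excluded by the budget. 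Summing the four contributions gives $2\cdot\frac{(n-1)(n-2)}{2} + (n-1)^2 + 2(n-2) = 2n^2-3n-1$, which also matches the sub-count $\frac{n^2+n-6}{2}$ claimed for the ``two $E$'' family.

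For the non-conjugacy assertion I would use the nilpotent dimension, which is invariant under conjugation by Corollary \ref{nilpotent dimension}. The $P$-family retains the full nilpotent part, so its nilpotent dimension is $\frac{n(n-1)}{2}$; each $M$-family algebra removes exactly one off-diagonal, so its nilpotent dimension is $\frac{n(n-1)}{2}-1$; and every $N$-family algebra removes two nilpotent generators, giving $\frac{n(n-1)}{2}-2$. These three values are distinct, so no member of one family can be conjugate to a member of another, which is precisely the cross-family statement. Should the finer within-family distinctions be wanted, they reduce to the conjugacy results already proved: distinct $P_{ij}$ and $M_{kj}$ are separated by Theorem \ref{conjugacy of l_ab} and the diagonal-conjugacy analysis, while distinct nilpotent parts are separated by the classification in Theorem \ref{N_i theorem}.

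The hard part will be the bookkeeping in the two-nilpotent case: I must verify that the forced deletions coming from Lemma \ref{lemma 1} never overlap in a way that secretly requires a third deletion (they cannot, since two distance-two removals would force a total of at least three deletions), and I must check that the four families are pairwise disjoint as sets of deleted pairs so nothing is double-counted. The latter is immediate from the split by type, with the one subtlety that $N_{ij}$ with $j=i+1$ removes $\{E_{i,i+1},E_{i+1,i+2}\}$ whereas $N_{C_i}$ removes $\{E_{i,i+2},E_{i+1,i+2}\}$, so these are genuinely different subalgebras. Once this disjointness is confirmed, the count and the family list follow, and the invariant argument closes the non-conjugacy claim.
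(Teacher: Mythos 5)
Your proposal is correct and takes essentially the same route as the paper's own proof: the identical case split by how the two deleted generators distribute over $H$ and $E$, with Lemma \ref{remove h_i} handling diagonal removals, Lemma \ref{lemma 1} forcing the $j-i=1$ (respectively $j-i=2$ plus one forced neighbor) constraints, and the nilpotent-dimension invariant of Corollary \ref{nilpotent dimension} separating the $P$, $M$, $N$ families. Your explicit disjointness check and your count $2(n-2)$ for the $N_{R_i},N_{C_i}$ family (the paper's proof body writes $2(n-1)$, a typo relative to the theorem statement) are minor tightenings of the same argument, not a different approach.
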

\begin{proof}
As the algebras are codimension 2, there are three options; remove two $E_{ij}$'s, an $E_{ij}$ and $H_i$, or two $H_i$'s.  
By lemma \ref{remove h_i}, we know that we can remove any number of $H_i$'s and still be a subalgebra. As the order of removal doesn't matter, the total number of these algebras are ${{n-1} \choose 2 }= \frac{n^2-3n+2}{2}$, and so the $P_{ij}$'s as described are subalgebras. 
If we remove one $E_{ij}$, we need $j -i = 1 \implies j = i +1$. Therefore, can only remove $E_{i,i+1}$ or  any $H_i$. As there are $n-1$ of each, we have $(n-1)(n-1)$ of $M_{ij}$'s as described above.
Finally, we can take out two $E_{ij}$'s. So either we take out 2 independent $E_{ij}$'s that don't require the removal of other elements, or we take out 1 $E_{ij}$ which requires the removal of one other element. In the first case, as seen earLier, we can only take out $E_{i,i+1}$ and $E_{j,j+1}$ independently (where $i \neq j$), and there are $\frac{(n-1)(n-2)}{2}$ such pairs. In the second case, we have $j - i = 2 \implies j = i + 2$. So, to preserve the Lie bracket we must either remove $E_{i,i+1}$ or $E_{i+1,i+2}$. Thus, there are $2(n-1)$ such possible combinations. These are exactly the subalgebras described above. So in total there are $2n^2 - 3n - 1$ such subalgebras.
Clearly, the $P_{ij}$'s, \{$N_{ij}, N_{R_{i}}, N_{C_{i}}\}$'s, and the $M_{ij}'s$ are in different conjugacy classes as nilpotent elements conjugate to nilpotent elements by lemma \ref{nilpotent dimension}, and they each have a different number of nilpotent elements. 
\end{proof}

\textbf{Note:} Let $\mathcal{N}_{ij}, \mathcal{N}_{R_{k}}, \mathcal{N}_{C_{l}} $ denote the maximal nilpotent subalgebras of $\{N_{ij}, N_{R_{k}}, N_{C_{l}}\}$. We shall be distinguishing their conjugacy classes in the next four lemmas. All the proofs are analagous to theorem \ref{Nilpotent inductive classification}. We abuse notation and refer to both the star matrix and the subalgebra as $\N_{i,j}$. We do so as the powers of the matrix exactly correspond to the derived series, and the notation for both is the same (by lemma \ref{multiplication by star matrix}).
\begin{lemma} (Classification of $\N_{ij}$)
    Let $v = {\C^n}_0$. Take the maximal nilpotent subalgebra $\N_{i,j}$ of $N_{i,j}$. We claim:\\
    (i) For $l < j$ we have ${\N_{k,l}}^{n-j} v = {\C^n}_j$ with $n - j$ zeros, and ${\N_{i,j}}^{n-j} v = {\C^n}_{j-1}$ with $n - j + 1$ zeros. \\
    A bit more abstractly, the left action of ${\N_{i,j}}^{n-j}$ on $\C^{n}$ sends it to ${\C^n}_{j - 1}$, a ($j - 1$)-dimensional subspace, and the ${n - j}^{th}$ term in the derived algebra of $N_{k,l}$, i.e., the left action of ${N_{k,l}}^{n-j}$ on $\C^{n}$, sends it to ${\C^n}_{j}$ a $j$-dimensional subspace.\\
    (ii) For $i < k$ we have $v^T{\N_{k,l}}^{i} = \begin{bmatrix}  0_1 &  \hdots & 0_{i} & \ast_{i+1} & \hdots & \ast_n   \end{bmatrix}$ with $i - 1$ zeros, and $v^T{\N_{i,j}}^{i}  = \begin{bmatrix}
        0_1 &  \hdots & 0_{i+1} & \ast_{i+1} & \hdots & \ast_n  
    \end{bmatrix}$ has $i + 1$ zeros. \\
    The ${i}^{th}$ term in the derived algebra of $\N_{i,j}$, i.e., the right action of ${\N_{i,j}}^{i}$ on $\C^{n}$, sends it to ${\C^n}_{n - (i+1)}$ an $(n - (i+1))$-dimensional subspace, and the ${i}^{th}$ term in the derived algebra of $N_{k,l}$, i.e., the right action of ${N_{k,l}}^{i}$ on $\C^{n}$ sends it to an $(n-i)$-dimensional subspace.  
    \label{N-ij classification}
\end{lemma}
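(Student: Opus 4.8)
The plan is to represent $\N_{i,j}$ as the strictly upper-triangular star matrix carrying a $\ast$ in every position $(r,s)$ with $r<s$ except at the two holes $(i,i+1)$ and $(j,j+1)$, and to imitate the downward induction of Lemma \ref{Nilpotent inductive classification}. The one genuinely new feature relative to that lemma is the presence of two missing off-diagonal entries rather than one, and parts (i) and (ii) are designed precisely to isolate the effect of each: the lower hole $(j,j+1)$ controls the left (column) action, while the upper hole $(i,i+1)$ controls the right (row) action, and the two computations are transposes of one another. Throughout I would lean on the genericity principle behind every star-matrix computation in the paper (Observation \ref{multiplication by star matrix}): since the nonzero entries of $\N_{i,j}$ may be treated as algebraically independent, no accidental cancellation occurs, so the support of $\N_{i,j}^{\,m}v$ (resp. of $v^T\N_{i,j}^{\,m}$) is exactly the combinatorially predicted reachable set.

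For part (i) I would track the support $S_m=\{r:(\N_{i,j}^{\,m}v)_r\neq 0\}$ using the propagation rule that row $r$ lies in $S_m$ iff some column $s>r$ with $(r,s)$ not a hole lies in $S_{m-1}$. Starting from $S_0=\{1,\dots,n\}$, as long as the maximum of the current support exceeds $j+1$ both holes are irrelevant, the support stays a contiguous block $\{1,\dots,M\}$, and $M$ drops by exactly one per step, giving $S_m=\{1,\dots,n-m\}$. The decisive moment is $m=n-j$, where $S_{m-1}=\{1,\dots,j+1\}$: a hole-free row $j$ would survive and leave $S_{n-j}=\{1,\dots,j\}={\C^n}_j$, but the hole $(j,j+1)$ forces row $j$ to be fed only from $\{j+2,\dots,n\}$, which misses $S_{m-1}$, so row $j$ drops and $S_{n-j}=\{1,\dots,j-1\}={\C^n}_{j-1}$. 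For any $\N_{k,l}$ with $l<j$ both of its holes sit in rows $k<l<j$ and hence only become active after step $n-j$, so the hole-free count applies and $\N_{k,l}^{\,n-j}v={\C^n}_j$. One also checks that the upper hole $(i,i+1)$ never prematurely removes a row of index $<j$, which holds because $i+2\le j+1$ keeps row $i$ fed at the relevant step.

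Part (ii) is the transpose of this argument: column $c$ lies in the support of $v^T\N_{i,j}^{\,m}$ iff some row $r<c$ with $(r,c)$ not a hole lies in the support of $v^T\N_{i,j}^{\,m-1}$, so the support is now a contiguous block $\{m+1,\dots,n\}$ that loses its smallest index each step. The decisive step is $m=i$: with previous support $\{i,\dots,n\}$, a hole-free column $i+1$ would survive, but the hole $(i,i+1)$ demands that column $i+1$ be fed from $\{1,\dots,i-1\}$, which misses the previous support, so column $i+1$ drops and the support becomes $\{i+2,\dots,n\}$ — the claimed $i+1$ zeros, i.e. an $(n-(i+1))$-dimensional image. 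For $\N_{k,l}$ with $i<k$ (hence $i<k<l$) the holes sit in columns $k+1,l+1>i+1$ and are not yet active at step $i$, leaving support $\{i+1,\dots,n\}$ and one fewer zero. With both parts in hand I would invoke Corollary \ref{Derived Series Left and row rank} together with Lemmas \ref{acting on column vectors lemma} and \ref{acting on row vectors lemma} to conclude that these column/row dimensions of the iterated matrices are conjugacy invariants separating the $\N_{i,j}$.

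I expect the main obstacle to be rigor around the genericity claim and the small-index boundary cases, rather than the core counting. Concretely, one must justify that the stars act as independent indeterminates so that the predicted reachable set is actually attained with no accidental zero; I would make this precise by specializing to algebraically independent values, or equivalently by working over the field of rational functions in the entries. The genuinely fiddly configurations are the adjacent-hole case $j=i+1$ and the edge cases $j=n-1$ and $i=1$, where the two holes are close together or touch the border of the matrix. In each I would verify directly that the step at which the governing hole fires does not coincide with a premature firing of the other hole, so that exactly one extra zero appears for $\N_{i,j}$ relative to $\N_{k,l}$ in each of the two actions, which is precisely what the inductive hypothesis needs to close.
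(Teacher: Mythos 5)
Your proposal is correct and follows essentially the same route the paper takes: the paper gives no standalone proof of this lemma, instead declaring it ``analogous to'' the single-hole induction of Lemma \ref{Nilpotent inductive classification}, and your support-propagation argument for the two-hole star matrix (lower hole $(j,j+1)$ firing at step $n-j$ for the column action, upper hole $(i,i+1)$ firing at step $i$ for the row action) is precisely that analogy carried out explicitly. Your added care is welcome rather than divergent: you correctly read $v$ as a generic full vector despite the paper's ``$v={\C^n}_0$'' typo, resolve the paper's internal inconsistency in part (ii) in favor of the displayed count of $i$ versus $i+1$ zeros, and verify the adjacent-hole case $j=i+1$ and the boundary cases $i=1$, $j=n-1$ that the paper's appeal to analogy leaves unchecked.
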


\begin{lemma} (Classification of $\N_{R_i}$)
    \label{R_i conjugation}
 Let $v = {\C^n}_0$. Let $ 1 \leq i \leq n - 2$ Take the maximal nilpotent subalgebra $\N_{R_i}$ of $N_{R_i}$. We claim
    For $i > j$ we have that ${\N_{R_i}}^{n-(i+1)} v = {\C^n}_i$ with $n - i$ zeros, and ${\N_{R_j}}^{n-(i+1)} v = {\C^n}_{i+1}$ has $n - (i + 1)$ zeros. \\
    A bit more abstractly, the left action of ${\N_{R_i}}^{n-(i+1)}$ on $\C^{n}$ sends it to $\C^{i}$, an $i$-dimensional subspace, and the ${n - (i+1)}^{th}$ term in the derived algebra of $N_{R_j}$, i.e., the left action of ${N_{R_j}}^{n-(i+1)}$ on $\C^{n}$ sends it to ${\C^n}_{i+1}$, an $(i +1)$-dimensional subspace.    
\end{lemma}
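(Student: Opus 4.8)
The plan is to reduce the two power computations to a statement about strictly increasing paths in the ``edge graph'' of the star matrix, in the spirit of Lemma \ref{Nilpotent inductive classification}. By Observation \ref{multiplication by star matrix}, the entry $(\N_{R_i}^k)_{ml}$ is a star precisely when there is a strictly increasing chain $m = l_0 < l_1 < \cdots < l_k = l$ each of whose steps $E_{l_{t-1},l_t}$ survives in the basis; for generic star values and generic $v$ there is no cancellation, so the $m$-th coordinate of $\N_{R_i}^k v$ is nonzero iff row $m$ of $\N_{R_i}^k$ contains a star, i.e.\ iff there is a surviving increasing chain of length exactly $k$ out of $m$. Since every prefix of such a chain is again surviving, this happens iff the longest surviving increasing chain out of $m$ has length at least $k$. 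Hence the number of nonzero entries of $\N_{R_i}^k v$ equals $|\{m : \ell_i(m) \ge k\}|$, where $\ell_i(m)$ denotes the length of the longest surviving increasing chain starting at row $m$ in $\N_{R_i}$.

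Next I would compute $\ell_i(m)$ for $\N_{R_i}$, which is the full strictly-upper-triangular pattern with exactly the two edges $E_{i,i+1}$ and $E_{i,i+2}$ deleted from row $i$. For $m > i$, node $i$ is unreachable, so the consecutive chain $m \to m+1 \to \cdots \to n$ survives and $\ell_i(m) = n-m$. For $m = i$ the only surviving out-edges go to $i+3, \dots, n$, so the longest chain is $i \to i+3 \to \cdots \to n$ and $\ell_i(i) = n-i-2$. For $m < i$ the only broken edge on the consecutive chain is $i \to i+1$, so one skips node $i$ through the surviving edge $E_{i-1,i+1}$, giving $m \to \cdots \to i-1 \to i+1 \to \cdots \to n$ of length $n-m-1$; here I would verify that no longer chain exists (any chain through $i$ must jump from $i$ to some index $\ge i+3$ and is strictly shorter), so $\ell_i(m) = n-m-1$. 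The identical analysis for $\N_{R_j}$ gives $\ell_j(m) = n-m$ for $m>j$, $\ell_j(j) = n-j-2$, and $\ell_j(m) = n-m-1$ for $m<j$.

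I would then substitute $k = n-(i+1) = n-i-1$. For $\N_{R_i}$ the inequality $\ell_i(m) \ge n-i-1$ holds for $m \le i-1$, fails at $m=i$ (where $\ell_i(i) = n-i-2$), holds at $m=i+1$ (where $\ell_i(i+1) = n-i-1$), and fails for $m \ge i+2$; this yields exactly $i$ nonzero coordinates and $n-i$ zeros, the single interior vanishing coordinate sitting at position $i$. For $\N_{R_j}$ with $j < i$, the inequality now also holds at $m=j$ (since $n-j-2 \ge n-i-1 \iff j \le i-1$) and at every $m \le i+1$, while failing for $m \ge i+2$, giving exactly $i+1$ nonzero coordinates and $n-(i+1)$ zeros. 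This is the asserted dichotomy; the symbol ${\C^n}_i$ should be read as an $i$-dimensional image (for $\N_{R_i}$ the zero falls at coordinate $i$ rather than at the very bottom), whereas for $\N_{R_j}$ the surviving coordinates are genuinely the top $i+1$. Combined with the invariance of the column rank along the series of star-matrix powers (Corollary \ref{Derived Series Left and row rank} with Observation \ref{multiplication by star matrix}), the differing counts $i$ versus $i+1$ at the common power $n-i-1$ separate $N_{R_i}$ from each $N_{R_j}$ with $j \ne i$.

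The only genuinely delicate point is the case $m < i$: one must check that deleting the two row-$i$ edges lowers the longest chain out of $m$ by \emph{exactly} one, not zero and not two. The clean reason is that exactly one consecutive edge, $i \to i+1$, is destroyed while the bypass edge $E_{i-1,i+1}$ survives, forcing precisely one skipped node; this single dropped unit is what makes $\N_{R_i}$ lose one dimension relative to the unbroken pattern at power $n-i-1$, and it is what the index $i$ controls. The whole reachability count can be repackaged as an induction on the power $k$, tracking (as in Lemma \ref{Nilpotent inductive classification}) how each application of $\N_{R_i}$ propagates the nonzero pattern and where the deleted edges block it, but the longest-chain formulation makes the count transparent in a single pass and lets the boundary case $i=n-2$ (where node $i$ has no surviving out-edge and $\ell_i(i)=0$) drop out immediately.
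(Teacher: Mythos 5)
Your proposal is correct, and it reaches the paper's conclusion by a genuinely different organization of the computation. The paper offers no written-out proof here: it declares the argument ``analogous to'' Lemma \ref{Nilpotent inductive classification}, i.e.\ an induction on the power of the star matrix, tracking at each step which coordinates of ${\N_{R_i}}^k v$ acquire zeros. You instead convert the whole question into a single closed-form count: a star in position $(m,l)$ of ${\N_{R_i}}^k$ corresponds to a surviving strictly increasing chain of length $k$ from $m$ to $l$, so the number of nonzero coordinates at power $k$ is $|\{m : \ell_i(m)\ge k\}|$ where $\ell_i(m)$ is the longest surviving chain out of $m$; your computed values $\ell_i(m)=n-m$ for $m>i$, $\ell_i(i)=n-i-2$, $\ell_i(m)=n-m-1$ for $m<i$ are all correct, including the one delicate verification that deleting $E_{i,i+1}$ and $E_{i,i+2}$ costs chains from $m<i$ exactly one unit (the bypass $E_{i-1,i+1}$ survives, while any chain through node $i$ loses two). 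The longest-chain formulation buys transparency -- the dichotomy at $k=n-i-1$ drops out by substitution, and boundary cases like $i=n-2$ need no separate treatment -- whereas the paper's induction buys uniformity with the template proof of Lemma \ref{Nilpotent inductive classification}. Notably, your computation also exposes an imprecision in the lemma as stated: for ${\N_{R_i}}^{n-(i+1)}v$ the vanishing coordinate among the top $i+1$ sits at position $i$ (pattern $\ast_1,\dots,\ast_{i-1},0_i,\ast_{i+1},0,\dots,0$), not at the bottom as the literal symbol ${\C^n}_i$ would suggest; for example with $n=4$, $i=2$, one gets $(\ast,0,\ast,0)$ rather than $(\ast,\ast,0,0)$. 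Since the conjugation invariant of Lemmas \ref{acting on column vectors lemma} and \ref{min/max rank invariance} is the dimension of the image (the count of nonzero coordinates, $i$ versus $i+1$), not their positions, the classification consequence survives intact, and you are right to flag the reading of ${\C^n}_i$ as ``an $i$-dimensional image'' as the needed correction.
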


\begin{lemma} (Classification of $\N_{C_i}$)
    \label{C_i conjugation}
    Let $v^T = \begin{bmatrix}
        \ast_1 & \hdots & \ast_n
    \end{bmatrix}$ where the entries $\ast_i \in \C$. This represents $\C^n$. Let $1 \leq i \leq n -2$. For $i < k$  $v^T{\N_{C_i}}^{i} = \begin{bmatrix}  0_1 &  \hdots & 0_{i} & \ast_{i+1} & \hdots & \ast_n   \end{bmatrix}$ with $i - 1$ zeros, and $v^T{\N_{i,j}}^{i}  = \begin{bmatrix}
        0_1 &  \hdots & 0_{i+1} & \ast_{i+1} & \hdots & \ast_n  
    \end{bmatrix}$ has $i + 1$ zeros. 
    A bit more abstractly, the ${i}^{th}$ term in the derived algebra of $\N_{i,j}$, i.e., the right action of ${\N_{i,j}}^{i}$ on $\C^{n}$ sends it to $\C^{n - (i+1)}$, an $(n - (i+1))$-dimensional subspace, and the ${i}^{th}$ term in the derived algebra of $N_{k,l}$, i.e., the right action of ${N_{k,l}}^{i}$ on $\C^{n}$ sends it to $\C^{n - i}$, an $(n-i)$-dimensional subspace.  
\end{lemma}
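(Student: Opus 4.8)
The plan is to follow the same template as Lemmas \ref{Nilpotent inductive classification}, \ref{N-ij classification}, and \ref{R_i conjugation}: compute a conjugation invariant --- here the row rank of the $i$-th term in the derived series of $\N_{C_i}$ --- and show it takes different values for $\N_{C_i}$ and $\N_{C_k}$ when $k \neq i$. By the abuse of notation justified through Observation \ref{multiplication by star matrix}, this term is represented by the matrix power $\N_{C_i}^{\,i}$, and by Corollary \ref{Derived Series Left and row rank} its row rank is invariant under conjugation, so it suffices to read the rank off the star matrix. Since $\N_{C_i}$ is strictly upper triangular with generic independent stars, $(\N_{C_i}^{\,i})_{ab}$ is (generically) nonzero exactly when there is a strictly increasing path $a=c_0<c_1<\cdots<c_i=b$ none of whose steps $(c_r,c_{r+1})$ is a deleted position, i.e. none equal to $(i,i+2)$ or $(i+1,i+2)$. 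The right action $v^T\N_{C_i}^{\,i}$ then has a generically nonzero entry in column $b$ precisely when column $b$ of $\N_{C_i}^{\,i}$ is nonzero, so the row rank equals the number of such columns.

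First I would compute the row rank of $\N_{C_i}^{\,i}$. Columns $1,\dots,i$ are zero, since a length-$i$ increasing path ends at $b\ge a+i\ge i+1$. Column $i+1$ is nonzero via the unique path $1<2<\cdots<i+1$, whose steps $(1,2),\dots,(i,i+1)$ all avoid the deleted positions. The crux is column $i+2$: any length-$i$ path ending there must start at $a\le 2$, so its penultimate vertex $c$ satisfies $i\le c\le i+1$ (it is reached in $i-1$ steps from $a\ge1$, and lies below $i+2$); hence the last step is $(i,i+2)$ or $(i+1,i+2)$, both deleted, and no valid path exists --- column $i+2$ vanishes. For $b\ge i+3$ I would exhibit an explicit length-$i$ path detouring around $i+2$ (splicing in the allowed jump $(i+1,i+3)$, or, once $b-i>i+2$, the consecutive path $b-i<\cdots<b$ whose vertices all lie above $i+2$). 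Thus the nonzero columns are exactly $\{i+1\}\cup\{i+3,\dots,n\}$, giving row rank $n-(i+1)$, as claimed.

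Next I would show that for $k>i$ the matrix $\N_{C_k}^{\,i}$ has every column $i+1,\dots,n$ nonzero, i.e. row rank $n-i$. Its only deleted positions lie in column $k+2$, and because the exponent is $i<k$ there is always room to route a length-$i$ path around them: for $b\neq k+2$ the final step does not enter column $k+2$, and any forced passage through $k+2$ can be avoided by the same jump and consecutive detours as above; for the one delicate target $b=k+2$ one enters $k+2$ from a vertex $c\le k-1$ along the allowed position $(c,k+2)$, which is possible because $k-1\ge i$ leaves enough steps. Hence $\N_{C_k}^{\,i}$ has row rank $n-i\neq n-(i+1)$.

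Combining the two computations, the row rank of the $\ell$-th derived term separates $\N_{C_\ell}$ from every $\N_{C_m}$ with $m>\ell$; applying this for each $\ell$ shows that $\{\N_{C_\ell}\}_{\ell=1}^{n-2}$ are pairwise non-conjugate, which is the content of the lemma (mirroring the right-action argument of Lemma \ref{N-ij classification}(ii), with row rank now playing the role that column rank played for $\N_{R_i}$). The hard part will be the combinatorial path bookkeeping: proving cleanly that both candidate last edges into column $i+2$ are deleted --- the source of the rank drop --- and, dually, that for $k>i$ one can always reroute in \emph{exactly} $i$ steps while avoiding the two deleted positions. A secondary point to settle is the genericity claim underlying Observation \ref{multiplication by star matrix}, namely that the star pattern of $\N_{C_i}^{\,i}$ really coincides with path-reachability, with no accidental cancellation among the generic entries.
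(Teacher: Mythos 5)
Your proposal is correct, and it establishes exactly the invariant the paper needs, but by a genuinely more explicit route than the paper takes. The paper never writes out a proof of this lemma at all: it declares the four classification lemmas ``analogous'' to Lemma \ref{Nilpotent inductive classification}, i.e.\ a downward induction that repeatedly multiplies the star matrix against the current image vector, always tracking images of initial-segment form ${\C^n}_m$. Your path-reachability computation replaces that induction with a closed-form description of the power: $(\N_{C_i}^{\,i})_{ab}$ is a sum over increasing paths of length $i$ from $a$ to $b$, each contributing a distinct monomial in independent star variables with coefficient $+1$, so there is no cancellation and the star pattern of the power coincides with path-reachability --- this in fact disposes of the ``secondary point'' you flagged as open. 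Your approach buys something concrete here: the support of $v^T\N_{C_i}^{\,i}$ is $\{i+1\}\cup\{i+3,\dots,n\}$, \emph{not} an initial segment (column $i+1$ survives via the unique consecutive path $1<\cdots<i+1$, while column $i+2$ dies because both possible entering edges $(i,i+2)$ and $(i+1,i+2)$ are deleted), so the paper's inductive template does not transcribe literally, and the lemma's displayed vector (a consecutive block of $i+1$ zeros) has the wrong pattern though the right count; your argument silently corrects the pattern while confirming the only fact the classification uses, namely row rank $n-(i+1)$ for $\N_{C_i}^{\,i}$ versus $n-i$ for $\N_{C_k}^{\,i}$ when $k>i$. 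Your combinatorial details check out: the splice through $(i+1,i+3)$ starting at $b-i-1$ has length $(2i+2-b)+1+(b-i-3)=i$ as required, and for $k>i$ the entry into column $k+2$ from $k-1$ works precisely because $k-i\ge 1$. You also correctly read the garbled statement (its stray $\N_{i,j}$, $N_{k,l}$ and the ``$i-1$ zeros'') as the intended comparison of $\N_{C_i}$ with $\N_{C_k}$, and your appeal to Corollary \ref{Derived Series Left and row rank} for invariance of the row rank of $i$-fold products matches the paper's framework, so the two approaches compute the same invariant --- yours just does so non-inductively and with the rank drop localized to an explicit combinatorial obstruction.
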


\begin{lemma} (Complete Classification of Nilpotent regular subalgebras of dimension $\frac{n(n-1)}{2}-2$) We claim that the conjugacy classes are $\{\N_{i,i+1}, \N_{R_{i}}, \N_{C_{i}}: 1 \leq i \leq n -2\}, \{\N_{n-1, n}\}$, and all the $\{\N_{ij}\}$ for $j \neq i + 1$  are all in singleton conjugacy classes.
\label{nilpotent classification 2 dimension less}
\end{lemma}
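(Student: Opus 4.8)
The plan is to split the statement into two independent tasks: establishing the positive conjugacies that collapse each triple $\{\N_{i,i+1},\N_{R_i},\N_{C_i}\}$ into a single class, and establishing the non-conjugacies that keep the remaining classes distinct. For the non-conjugacies I would lean entirely on the invariants already computed in Lemmas \ref{N-ij classification}, \ref{R_i conjugation}, and \ref{C_i conjugation}, read through Corollary \ref{Derived Series Left and row rank} (the column/row ranks of every term of the derived series are conjugacy invariants) together with Lemma \ref{nilpotent conjugacy}. The guiding principle is that each of our algebras reduces, up to conjugacy, to a canonical $\N_{ij}$ with $1\le i<j\le n-1$, after which it suffices to see that the pair $(i,j)$ is itself a conjugacy invariant.

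For the positive direction I would produce explicit conjugators, echoing the paper's earlier use of transposition matrices. Fix $i$ with $1\le i\le n-2$ and look at the triangle of positions $\{(i,i+1),(i+1,i+2),(i,i+2)\}$: each of $\N_{i,i+1}$, $\N_{R_i}$, $\N_{C_i}$ retains exactly one of these three entries (namely $E_{i,i+2}$, $E_{i+1,i+2}$, $E_{i,i+1}$ respectively) and contains every $E_{kl}$ outside the triangle. I claim $\tau_{i,i+1}$ conjugates $\N_{i,i+1}$ to $\N_{R_i}$ and $\tau_{i+1,i+2}$ conjugates $\N_{i,i+1}$ to $\N_{C_i}$. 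Conjugating $E_{kl}$ by such a transposition permutes its row and column indices; the point to verify is that the only upper-triangular position whose image under $\tau_{i+1,i+2}$ becomes lower-triangular is $(i+1,i+2)$ itself, and that entry is already absent from $\N_{i,i+1}$, so no present generator escapes the strictly-upper-triangular region. Every other present generator lands on another present upper-triangular position, and a short index check shows the two removed entries move to exactly the two entries absent from the target. Since a transposition has determinant $-1$, I would premultiply by a diagonal sign matrix to land in $SL(n,\C)$; conjugation by a diagonal matrix only rescales each $E_{kl}$ and hence fixes every algebra spanned by the $E$'s, so the conjugacy class is unaffected.

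Assembling: by the previous paragraph every $\N_{R_i}$ and $\N_{C_i}$ is conjugate to $\N_{i,i+1}$, so each conjugacy class contains a representative of the form $\N_{ij}$. It then remains to show distinct $\N_{ij}$ are never conjugate, which is exactly what Lemma \ref{N-ij classification} delivers: part (i) recovers $j$ from the left action of the $(n-j)$-th derived term and part (ii) recovers $i$ from the right action of the $i$-th derived term, so the pair $(i,j)$ is a complete invariant on the $\N_{ij}$. Consequently the class of each consecutive $\N_{i,i+1}$ is exactly $\{\N_{i,i+1},\N_{R_i},\N_{C_i}\}$, every non-consecutive $\N_{ij}$ (with $j\ge i+2$) is a singleton, and these exhaust all $\tfrac{(n-1)(n-2)}{2}$ classes; the extreme consecutive pair, sitting in the triangle $\{n-2,n-1,n\}$, is covered by the same transpositions with $i=n-2$. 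I expect the main obstacle to be the positive direction: specifically the careful index bookkeeping confirming that conjugation by each transposition lands precisely on the target algebra and never produces a lower-triangular or diagonal entry. The non-conjugacies, by contrast, are immediate once the invariants of Lemmas \ref{N-ij classification}, \ref{R_i conjugation}, and \ref{C_i conjugation} are in hand.
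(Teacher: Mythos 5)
Your proposal is correct and takes essentially the same route as the paper: the paper also collapses each triple via the transpositions $\tau_{i,i+1}$ (sending $\N_{i,i+1}$ to $\N_{R_i}$) and $\tau_{i+1,i+2}$ (sending $\N_{i,i+1}$ to $\N_{C_i}$), and separates the $\N_{ij}$ pairwise using exactly the derived-series column/row-action invariants of Lemma \ref{N-ij classification} together with Lemmas \ref{acting on column vectors lemma} and \ref{acting on row vectors lemma}. Your two small refinements --- fixing the determinant with a diagonal sign matrix to land in $SL(n,\C)$, and reducing every class to a canonical $\N_{ij}$ representative before invoking pairwise distinctness (rather than the paper's separate remark that $\N_{C_i}\sim\N_{C_j}$ would force $\N_{i,i+1}\sim\N_{j,j+1}$) --- are harmless and, if anything, tidier than the original.
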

\begin{proof}
We have:
\begin{equation*}
    \N_{ij} = \text{span}(E \setminus \{E_{i,i+1}, E_{j,j+1}\})
\end{equation*}
\begin{equation*}
    \N_{R_i} = \text{span}(E \setminus \{E_{i,i+1}, E_{i,i+2}\})
\end{equation*}
\begin{equation*}
    \N_{C_i} = \text{span}(E \setminus \{E_{i,i+2}, E_{i+1,i+2}\})
\end{equation*}
We abuse notation by denoting both the algebra and its star matrix by the same notation. 
\begin{enumerate}
    \item \textbf{$\N_{ij}$ is not conjugate to $\N_{kl}$ for $(i,j) \neq (k,l)$}: By lemma \ref{acting on column vectors lemma} and \ref{acting on row vectors lemma} we know that two conjugate subalgebras have the same action on column vectors, but by lemma \ref{N-ij classification} we know that this is not the case. Thus, these algebras are not conjugate.
    \item \textbf{$\{\N_{R_i}, \N_{C_i}, \N_{i,i+1}\}$ form a conjugacy class:} $\tau_{i+1, i+2}$ conjugates $N_{C_i}$ to $N_{i,i+1}$. Similarly, $\tau_{i, i+1}$ conjugates $N_{i,i+1}$ to $N_{R_i}$. The product $\tau_{(i+1, i+2)}\tau_{(i,i+1)} = \tau_{(i, i+2)}$ conjugates $\N_{C_i}$ to $\N_{R_i}$. 
\end{enumerate}
$N_{C_i}$ is not conjugate to $N_{C_j}$ otherwise $\N_{i,i+1}$ would be conjugate to $\N_{j,j+1}$. Similarly for $N_{R_i}$. Thus, the conjugacy classes are :$\{\N_{i,i+1}, \N_{R_{i}}, \N_{C_{i}}: 1 \leq i \leq n -2\}, \{\N_{n-1, n}\}$, and $\{\N_{ij}\}$ for $j \neq i$ (the $\{\N_{ij}\}$  for $j \neq i + 1$ are in singleton conjugacy classes.)
\end{proof}

\begin{lemma} (Classification of $P_{ij}$)
$P_{ij}$ is conjugate to $P_{kl}$ iff $(i,j) = (k,l)$.
\label{classification of p_ij}
\end{lemma}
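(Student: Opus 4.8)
The plan is to recognize $P_{ij}$ as a special case of the family $L_{pq}$ already handled in Theorem \ref{conjugacy of l_ab} and to reduce to that result. First I would note that in the $H_{pq}$ notation $H_i = E_{ii}-E_{i+1,i+1} = H_{i,i+1}$, so that
$$P_{ij} = \s(E \cup H \setminus \{H_i, H_j\}) = \s(E \cup H \setminus \{H_{i,i+1}, H_{j,j+1}\}).$$
Thus $P_{ij}$ is exactly the algebra $L_{pq}$ of Theorem \ref{conjugacy of l_ab} with $t=2$ and removed set $\{H_{i,i+1},H_{j,j+1}\}$; since $i\neq j$ the two removed elements are distinct, and $P_{ij}$ is a genuine subalgebra by Lemma \ref{remove h_i}. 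Because every $P_{ij}$ carries the full nilpotent basis $E$, it already lies in a different conjugacy class from the $N$- and $M$-type algebras (different nilpotent dimension, by Theorem \ref{subalgebras of 2 dimension less}), so the only thing left is to separate the $P$'s from one another. The \emph{if} direction is immediate: under the harmless convention $i<j$, $k<l$ (harmless because $P_{ij}=P_{ji}$), $(i,j)=(k,l)$ means $P_{ij}$ and $P_{kl}$ are literally the same algebra.

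For the \emph{only if} direction I would invoke Theorem \ref{conjugacy of l_ab}, which states that $L_{pq}$ is conjugate to $L_{ab}$ precisely when the removed pairs coincide, and I would trace through the invariants on which it rests. By Corollary \ref{full E, H center}, since $P_{ij}$ has full nilpotent part $E$, any conjugating $g\in SL(n,\C)$ sends each diagonal generator to a diagonal element, so conjugacy is detected by the adjoint-action data of the removed $H$'s alone. The simplification particular to $P_{ij}$ is that each removed element $H_{i,i+1}$ is \emph{adjacent}, i.e. $q=p+1$, so its first index $p=i$ already determines the whole pair $(i,i+1)$; this eliminates the pairing ambiguity between the multiset of $p$-values and the multiset of $q$-values that could in principle occur for non-adjacent $H_{pq}$. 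Consequently the unordered removed set $\{i,j\}$ is recovered from the row-action invariant: by Lemma \ref{conjugacy of H for full nilpotent basis} the row rank $\C^n A_{pq}=\C^{n-(p-1)}$ pins down $p$, so whenever $\{i,j\}\neq\{k,l\}$ the two algebras carry distinct multisets of these invariants and cannot be conjugate.

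The only genuine case-check is the boundary value $i=n-1$ (equivalently $j=n-1$), where $H_{n-1}=H_{n-1,n}$ has $q=n$ and the column-action argument degenerates; here I would appeal to Lemma \ref{conjugacy of h_pq}, which separately guarantees that $H_{p,n}$ never conjugates to any $H_{pq}$ with $q<n$, exactly as in the proof of Theorem \ref{conjugacy of l_ab}. Assembling these, $\{i,j\}$ is a complete conjugacy invariant of $P_{ij}$, and $P_{ij}\cong P_{kl}\iff\{i,j\}=\{k,l\}\iff(i,j)=(k,l)$ under the chosen ordering. I expect no real obstacle beyond bookkeeping, since the substantive work (the adjoint row/column ranks and Lemma \ref{conjugacy of h_pq}) is already in place; the one point to state carefully is that adjacency of the removed indices collapses the $p/q$-pairing ambiguity, so that a single row-rank invariant already suffices to distinguish the $P_{ij}$.
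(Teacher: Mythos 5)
Your proof is correct and takes essentially the same approach as the paper: the paper offers no separate argument for this lemma, intending it (as with Lemma \ref{conjugacy for h_i}) to follow directly from Theorem \ref{conjugacy of l_ab}, and your identification $H_i = H_{i,i+1}$ reducing $P_{ij}$ to $L_{pq}$ with removed set $\{H_{i,i+1}, H_{j,j+1}\}$ --- backed by the row-rank invariant of Lemma \ref{conjugacy of H for full nilpotent basis} and Lemma \ref{conjugacy of h_pq} for the $q=n$ boundary --- is exactly the machinery that theorem rests on. Your added remark that adjacency of the removed indices collapses the $p/q$ ambiguity is a helpful clarification but not a departure from the paper's route.
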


\begin{lemma}(Classification of $M_{ij}$)
$M_{ij}$ is conjugate to $M_{kl}$ iff $(i,j) = (k,l)$.
\label{classification of m_ij}
\end{lemma}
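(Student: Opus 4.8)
The plan is to mirror the proof of Lemma \ref{classification of p_ij}, replacing the full nilpotent basis there by the codimension-one basis $E \setminus \{E_{i,i+1}\}$ and invoking the adjoint-action invariants developed in Section 4. Throughout I write $M_{ij} = \s((E \setminus \{E_{i,i+1}\}) \cup (H \setminus \{H_j\}))$ and record that its maximal nilpotent subalgebra is exactly $N_{i,i+1} = \s(E \setminus \{E_{i,i+1}\})$, and that deleting the standard generator $H_j$ means deleting the single element $H_{j,j+1}$ from the Cartan, since $H_j = H_{j,j+1}$.

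First I would separate the two indices using the cheapest invariant. Suppose $M_{ij}$ is conjugate to $M_{kl}$ by some $g \in SL(n,\C)$. By Lemma \ref{nilpotent conjugacy} their maximal nilpotent subalgebras are conjugate, so $N_{i,i+1}$ is conjugate to $N_{k,k+1}$; but Theorem \ref{N_i theorem} shows that $\{N_{i,i+1}\}$ consists of pairwise non-conjugate algebras. Hence $i = k$, and the problem reduces to distinguishing algebras that share the fixed nilpotent part $E \setminus \{E_{i,i+1}\}$ and differ only in which single diagonal generator was deleted.

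Next I would identify $M_{ij}$ with the member $L'_{i,(j,j+1)}$ of the family treated in Section 4 and apply the classification theorem for algebras with nilpotent part $E \setminus \{E_{i,i+1}\}$. For $i \le n-2$ this forces the deleted pairs to coincide, i.e. $(j,j+1) = (l,l+1)$ and hence $j = l$. Concretely, this is powered by Lemmas \ref{conjugacy when p is not equal to a} and \ref{conjugacy for the second case}: conjugacy forces equality of the first index of the deleted $H_{pq}$ through the row-action rank $\C^n A_{pq} = \C^{n-(p-1)}$, and equality of the second index through the column-action rank $A_{pq}\C^n = \C^{q}$, both of which are conjugation invariants by Lemmas \ref{acting on column vectors lemma}, \ref{acting on row vectors lemma}, and \ref{min max rank of adjoint action}.

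The main obstacle is the boundary behaviour in the last columns, which is precisely where the Section-4 classification acquires its exceptional clause. When $i = n-1$ the transposition $\tau_{n-2,n-1}$ can conjugate $H_{p,n}$ to $H_{p,n-1}$ (Lemma \ref{conjugacy of h_pq} and the discussion preceding the classification theorem), so a priori two distinct Cartan deletions could be identified. I would close this by observing that the exceptional clause pairs a deleted $H_{p,n-2}$ with a deleted $H_{p,n-1}$ having the \emph{same} first index $p$; but in the $M$-family both deleted elements are standard generators $H_{m,m+1}$, so matching the clause would require a single $p$ satisfying both $p+1 = n-2$ and $p+1 = n-1$, which is impossible. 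Thus the exceptional clause never merges two distinct algebras of the form $M_{i,\cdot}$ (and if one adopts the range convention $i<j$, the case $i = n-1$ does not arise at all). Combining this with $i = k$ from the first step yields $j = l$, and therefore $M_{ij}$ is conjugate to $M_{kl}$ if and only if $(i,j) = (k,l)$.
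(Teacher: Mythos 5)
Your proposal is correct and follows essentially the same two-step route as the paper's proof: first force $i = k$ by conjugacy of the maximal nilpotent parts (Lemma \ref{nilpotent conjugacy} plus Theorem \ref{N_i theorem}), then for fixed $i$ separate the Cartan deletions via the adjoint-action row/column invariants of Section 4. If anything, your version is more careful than the paper's: the paper cites Theorem \ref{conjugacy of l_ab}, which is stated for the \emph{full} nilpotent basis, where the relevant statements are Lemmas \ref{conjugacy when p is not equal to a} and \ref{conjugacy for the second case} for nilpotent part $E \setminus \{E_{i,i+1}\}$, and the paper never addresses the exceptional $i = n-1$ clause (where $\tau_{n-2,n-1}$ identifies $H_{p,n}$ with $H_{p,n-1}$), which you correctly rule out by observing that the deleted generators $H_{j,j+1}$ all have consecutive indices, so the clause's requirement $p = a$ with $q = n-2$, $b = n-1$ cannot be met.
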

\begin{proof}
$M_{ij}$ and $M_{kl}$ can only be conjugate if $i = k$, as none of the nilpotent subalgebras are conjugate to each other by theorem \ref{complete class of one less than full}. For a fixed $i$ no $M_{ij}$ is conjugate to $M_{il}$ by theorem \ref{conjugacy of l_ab}.
\end{proof}
    \begin{theorem}   
    (Classification of upper-triangular regular subalgebras of dimension $\frac{n(n+1)}{2} - 2$) \\
    The $P_{ij}$ are in singleton conjugacy classes. The $M_{ij}$ are in singleton conjugacy classes.
 $\{N_{ij}, N_{R_{k}}, N_{C_{l}}\}$   $\{N_{i,i+1}, N_{R_{i}}, N_{C_{i}}: 1 \leq i \leq n -2\}, \{N_{n-1, n}\}\}$, and all the $\{N_{ij}\}$ for $j \neq i + 1$  are all in singleton conjugacy classes.
\label{complete class of subalgebras of 2 dimension less}
\end{theorem}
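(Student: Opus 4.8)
The plan is to assemble this theorem entirely from the preceding lemmas, since the statement is essentially a bookkeeping result that collates the three families $\{P_{ij}\}$, $\{M_{ij}\}$, and $\{N_{ij}, N_{R_i}, N_{C_i}\}$. First I would invoke Theorem \ref{subalgebras of 2 dimension less}, which already establishes that these three families are mutually non-conjugate because they have distinct nilpotent dimensions (nilpotent dimension being a conjugacy invariant by Corollary \ref{nilpotent dimension}). This reduces the problem to classifying conjugacy within each family in isolation, so no cross-family coincidences need to be ruled out.

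For the $P$ family I would apply Lemma \ref{classification of p_ij} verbatim: $P_{ij}$ is conjugate to $P_{kl}$ iff $(i,j)=(k,l)$, so each $P_{ij}$ lies in its own singleton conjugacy class. For the $M$ family, Lemma \ref{classification of m_ij} gives precisely the same conclusion. Both of these steps are immediate once the families have been separated.

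The only step that requires a genuine argument is the $N$ family, and the key observation is that every algebra here contains the full Cartan $\s(H)$, i.e.\ the entire trace-zero diagonal subalgebra. I would show that conjugacy of the full algebras is \emph{equivalent} to conjugacy of their maximal nilpotent parts. The forward direction is Corollary \ref{nilpotent algebras conjugate to nilpotent algebras}: conjugate algebras have conjugate nilpotent parts, so Lemma \ref{nilpotent classification 2 dimension less} already rules out every non-conjugacy among the full $N$'s. For the converse, the conjugations exhibited in Lemma \ref{nilpotent classification 2 dimension less} are transposition and permutation matrices $\tau$, and conjugation by any permutation matrix sends $\s(H)$ to itself (it permutes the diagonal entries and preserves the trace). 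Hence the same $\tau$ that conjugates, say, $\N_{C_i}$ to $\N_{i,i+1}$ also conjugates the full algebra $N_{C_i}$ to $N_{i,i+1}$. Pushing the nilpotent conjugacy classes of Lemma \ref{nilpotent classification 2 dimension less} through this equivalence yields exactly the stated classes: the three-element classes $\{N_{i,i+1}, N_{R_i}, N_{C_i}\}$ for each $1 \le i \le n-2$, the singleton $\{N_{n-1,n}\}$, and the singletons $\{N_{ij}\}$ for $j \ne i+1$.

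The main obstacle — really the only nontrivial point — is justifying this passage from nilpotent parts to full algebras: I must confirm that the permutation conjugations preserve the Cartan part, so that adding the diagonal back in creates neither new merges nor new separations of conjugacy classes. Once that equivalence is recorded, the theorem is a direct restatement of the four cited lemmas, and I would phrase it simply as collecting Lemmas \ref{classification of p_ij}, \ref{classification of m_ij}, and \ref{nilpotent classification 2 dimension less} together with Theorem \ref{subalgebras of 2 dimension less}.
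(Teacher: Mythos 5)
Your proposal is correct and matches the paper's (implicit) proof: the theorem is stated there without a separate argument precisely because it is assembled from Theorem \ref{subalgebras of 2 dimension less} together with Lemmas \ref{classification of p_ij}, \ref{classification of m_ij}, and \ref{nilpotent classification 2 dimension less}, exactly as you do. Your one addition --- explicitly noting that the transposition conjugations preserve $\s(H)$, so conjugacy of the maximal nilpotent parts lifts to conjugacy of the full $N$-algebras --- is a step the paper leaves tacit, and spelling it out only strengthens the argument.
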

\subsection{Classification of 2 Dimensional regular Algebras}
\begin{theorem}
    The 2 dimensional upper-triangular regular subalgebras of $\mathfrak{sl}(n, \mathbb C)$ spanned by $E$ and $H$ are as follows:    
    \begin{align*}
        A_{(i,j), (k,l)} = \text{span}(\{E_{ij}, E_{kl}\}) & \text{ where $i \neq k,l$ and $j \neq k,l$.} & \text{There are $\frac{n^4 - 2n^3 -n^2 + 2n}{8}$ such algebras.}\\
        A_{(i,j), (i, l)} = \text{span}(\{E_{ij}, E_{il}\}) & \text{ where $i < j < l$.} &\text{There are $\frac{n^3 - 3n^2 + 2n}{6}$ such algebras.}\\
        A_{(i,j), (k,j)} = \text{span}(\{E_{ij}, E_{kj}\}) & \text{ where $i < k < j = l$.} & \text{There are $\frac{n^3 - 3n^2 + 2n}{6}$ such algebras.} \\
        B_{(i,j), k} = \text{span}(\{E_{ij}, H_k\}) & \text{ where $i,j \neq k, k + 1$} & \text{There are $\frac{n^3 - 6n^2 + 11n - 6}{2}$ such algebras.}\\
        B_{(i,j), i} = \text{span}(\{E_{ij}, H_i\}) & \text{ where $j \neq i + 1$}. & \text{There are $\frac{n^2 - 3n + 2}{2}$ such algebras.}\\
        B_{(i+ 1,j), i} = \text{span}(\{E_{i+1, j}, H_i\}) & & \text{There are $\frac{n^2 - 3n + 2}{2}$ such algebras.}\\
         B_{(i,i + 1), i} = \text{span}(\{E_{i, i + 1}, H_i\})& & \text{There are $n-1$ such algebras.}\\
           B_{(i,k), k} = \text{span}(\{E_{i, k}, H_k\}) & \text{ where $i \neq k$.} & \text{There are $\frac{n^2 - 3n + 2}{2}$ such algebras.}\\
           B_{(i,k + 1), k} = \text{span}(\{E_{i, k+1 }, H_k\})& \text{ where $i \neq k$.} & \text{There are $\frac{n^2 - 3n + 2}{2}$ such algebras.}\\
           C_{k,l} = \s(\{H_k, H_l\}) & \text{ where $ k < l$ and $l \neq k + 1$.} & \text{There are $\frac{n^2 - 5n + 6}{2}$ such algebras.}\\
           C_{l,l + 1} = \s(\{H_l, H_{l+1}\}) &  & \text{There are n-2 such algebras}.
    \end{align*}
None of the $A_i$ are conjugate to the $B_j$ and $C_k$, and $B_j$ and $C_k$ are also not conjugate to each other.
\end{theorem}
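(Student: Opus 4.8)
The plan is to organize the argument by the three possibilities for a two-element basis drawn from $E\cup H$: family $A$ consists of two nilpotent generators (both in $E$), family $B$ of one nilpotent and one diagonal generator, and family $C$ of two diagonal generators. For each family I would first pin down exactly which pairs are closed under the bracket, using $(\ref{Lie 1})$ together with the displayed formulas for $[E_{ij},H_k]$ and $[H_i,H_k]$, and only then count. Family $C$ is immediate: diagonal matrices commute, so every pair $\{H_k,H_l\}$ with $k<l$ is a (necessarily abelian) subalgebra, and one splits these by whether $l=k+1$. Family $B$ is also automatic for closure, since $E_{ij}$ is a root vector: $[H_k,E_{ij}]=\lambda\, E_{ij}$ is always a scalar multiple of $E_{ij}$, with $\lambda=(\delta_{ik}-\delta_{i,k+1})-(\delta_{jk}-\delta_{j,k+1})$. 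Thus every pair $\{E_{ij},H_k\}$ is closed, and the listed subfamilies correspond precisely to the possible values of $\lambda$: $\lambda=0$ when $i,j\notin\{k,k+1\}$, $\lambda=\pm1$ when exactly one of $i,j$ lies in $\{k,k+1\}$, and $\lambda=2$ for the pair $\{E_{i,i+1},H_i\}$. I would obtain the six $B$-subfamilies by a short casework on the position of $\{k,k+1\}$ relative to $\{i,j\}$.

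Family $A$ requires the only genuinely nontrivial closure argument. From $(\ref{Lie 1})$, $[E_{ij},E_{kl}]=\delta_{jk}E_{il}-\delta_{li}E_{kj}$, and I would show that for two \emph{distinct} upper-triangular matrix units this bracket can never be a nonzero scalar multiple of $E_{ij}$ or $E_{kl}$ (the indices $(i,l)$ and $(k,j)$ cannot coincide with either generator while the complementary term vanishes). Hence closure forces the bracket to vanish, i.e. $j\neq k$ and $i\neq l$, which is exactly the condition that the pair commutes. The commuting pairs then fall into three mutually exclusive types — disjoint index sets $\{i,j\}\cap\{k,l\}=\varnothing$, shared row $i=k$, and shared column $j=l$ — matching the three $A$-subfamilies exactly, and I would verify these three are disjoint and exhaust all commuting pairs.

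The counting for each subfamily reduces to an elementary enumeration of index tuples satisfying the stated inequalities: the shared-row and shared-column families are each $\binom{n}{3}$, and the $B$- and $C$-subfamily counts follow by summing over the free index. As a global consistency check I would confirm that the six $B$-counts sum to $|E|\,|H|=\binom{n}{2}(n-1)$ and that the two $C$-counts sum to $\binom{n-1}{2}$. The disjoint $A$-count is the number of vertex-disjoint pairs of edges of the complete graph $K_n$, namely $\tfrac12\binom{n}{2}\binom{n-2}{2}$, and this is the one step I expect to be fiddly — it is the natural place to re-examine the stated polynomial for the disjoint family.

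Finally, the non-conjugacy assertion in the last sentence is the easy part, handled by a single invariant: \emph{nilpotent dimension}. The maximal nilpotent subalgebra of each of these algebras is spanned by the $E_{ij}$ among its basis vectors, so families $A$, $B$, $C$ have nilpotent dimensions $2$, $1$, $0$ respectively; for $B$ one checks that any $aE_{ij}+bH_k$ with $b\neq 0$ has a nonzero eigenvalue and so is not nilpotent, pinning its nilpotent dimension at $1$. Since nilpotent dimension is invariant under conjugation by Corollary \ref{nilpotent dimension} (itself a consequence of Lemma \ref{nilpotent conjugacy}), the three families occupy disjoint sets of conjugacy classes, giving precisely the stated claim. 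The main obstacle is therefore not this invariant argument but the exhaustiveness-and-counting bookkeeping, concentrated in family $A$.
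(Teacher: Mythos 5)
Your proposal is correct and takes essentially the same route as the paper: the paper's entire proof is the one sentence ``these are clearly all of the subalgebras'' plus the nilpotent-dimension invariant (Corollary \ref{nilpotent dimension}) for the non-conjugacy of the $A$, $B$, $C$ families, which is exactly your closing argument; your closure analysis (every $\{E_{ij},H_k\}$ and $\{H_k,H_l\}$ is automatically closed, while $\{E_{ij},E_{kl}\}$ is closed iff the units commute, since the bracket is a third, linearly independent matrix unit otherwise) is precisely the detail the paper leaves implicit. One substantive remark: your instinct to re-examine the disjoint-pair count is vindicated. The number of vertex-disjoint pairs is $\tfrac12\binom{n}{2}\binom{n-2}{2}=3\binom{n}{4}=\tfrac{1}{8}\left(n^4-6n^3+11n^2-6n\right)$, whereas the stated polynomial satisfies $\tfrac{1}{8}\left(n^4-2n^3-n^2+2n\right)=\binom{\binom{n}{2}}{2}$, i.e.\ it counts \emph{all} unordered pairs of distinct upper-triangular units, including the non-closed ones and the shared-row/shared-column pairs (for $n=4$ it gives $15$ where the correct answer is $3$). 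So the discrepancy is an error in the theorem's stated count, not in your enumeration, and your consistency checks (the six $B$-counts summing to $\binom{n}{2}(n-1)$, the $C$-counts to $\binom{n-1}{2}$) confirm the rest of the table.
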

\begin{proof}
    These are clearly all of the subalgebras, and none of $A_i$ $B_j$ or $C_k$ are conjugate because of lemma \ref{nilpotent dimension}.
\end{proof}
\begin{definition} Let
    \begin{align*}
        A_1 = \{A_{(i,j), (k,l)}: 2\leq i \neq k,l < j \neq k,l \leq n - 1\}\\
        A_2 = \{ A_{(i,j), (i, l)}: 1\leq i < j < l \leq n - 1\} \\
        A_3 = \{A_{(i,j), (k,j)}: 1 \leq i < k < j \leq n - 1\} \\
        B_1 = \{B_{(i,j), k}: 1 \leq i,j \neq k, k + 1 \leq n - 1\}\\
        B_2 = \{B_{(i,j), i}, B_{(i+ 1,j), i} 1\leq i< < i + 1< j \leq n - 1\}\\
      B_3 = \{ B_{(i,i + 1), i}: 1 \leq i < i + 1\leq n - 1\}\\
        B_4 = \{ B_{(i,k), k}, B_{(i,k + 1), k}: 1 \leq i < k < k + 1 \leq n - 1  \}\\
        C_1 = \{C_{k,l}: 1 \leq  k < l \leq n - 1\}
    \\
       C_2 = \{C_{l,l + 1}: 1 \leq l < l + 1 \leq n - 1\}
    \end{align*}
\end{definition}

\begin{lemma}
    \label{A_i are not conjugate}
    $A_1$ $A_2$ and $A_3$ are in separate conjugacy classes. 
\end{lemma}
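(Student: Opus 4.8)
The plan is to distinguish the three families using the column-rank and row-rank invariants of Lemmas \ref{acting on column vectors lemma} and \ref{acting on row vectors lemma}, since every member of $A_1 \cup A_2 \cup A_3$ is abelian and so the commutator- and derived-series-based invariants are all trivial here. Concretely, by the bracket formula (\ref{Lie 1}) we have $[E_{ij}, E_{kl}] = \delta_{jk}E_{il} - \delta_{li}E_{kj}$, and in each of the three families the defining index conditions force $j \neq k$ and $i \neq l$ (so $\delta_{jk} = \delta_{li} = 0$); hence every one of these algebras has vanishing bracket and identical nilpotent dimension. The first step, then, is to record that these standard invariants cannot separate the families, which is exactly why one must turn to the left/right action on $\C^n$.

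Next I would compute the column rank, i.e. the dimension of the image of the left action. A generic element $aE_{ij} + bE_{kl}$ sends $v = (v_1, \dots, v_n)^T$ to $a v_j e_i + b v_l e_k$, so the image subspace is $\s\{e_i, e_k\}$. For $A_1$ (disjoint rows and columns) and $A_3$ (shared column, with distinct rows $i \neq k$) this subspace is $2$-dimensional, whereas for $A_2$ (shared row $i$) both generators map into $\s\{e_i\}$, giving column rank $1$. Dually, the right action sends $v^T$ to $a v_i e_j^T + b v_k e_l^T$, with image $\s\{e_j^T, e_l^T\}$; this is $2$-dimensional for $A_1$ and $A_2$ (whose two column indices satisfy $j \neq l$) and $1$-dimensional for $A_3$ (shared column $j$). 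Thus the pair (column rank, row rank) equals $(2,2)$ on $A_1$, $(1,2)$ on $A_2$, and $(2,1)$ on $A_3$.

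Finally, since column rank and row rank are conjugacy invariants by Lemmas \ref{acting on column vectors lemma} and \ref{acting on row vectors lemma}, and the three pairs $(2,2)$, $(1,2)$, $(2,1)$ are pairwise distinct, no algebra in one family can be conjugate to an algebra in another, which is precisely the claim. There is no serious obstacle in this argument; the only care needed is in checking that the index constraints in the definitions genuinely guarantee the claimed ranks—specifically that $i \neq k$ for $A_1$ and $A_3$ (so the column image is genuinely $2$-dimensional) and that $j \neq l$ for $A_1$ and $A_2$ (so the row image is genuinely $2$-dimensional), both of which follow immediately from the stated inequalities $i < j < l$ and $i < k < j$ together with the disjointness conditions defining $A_1$.
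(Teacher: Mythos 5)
Your separation argument is correct and is essentially the paper's own: the paper's proof tabulates the left and right actions of $A_1$, $A_2$, $A_3$ on column and row vectors and reads off exactly your rank pairs $(2,2)$, $(1,2)$, $(2,1)$, then invokes Lemmas \ref{acting on column vectors lemma} and \ref{acting on row vectors lemma}. Your preliminary observation that all three families are abelian (so nilpotent dimension and derived-series invariants cannot separate them) is a motivation the paper leaves implicit, and your attention to the index constraints $i \neq k$ and $j \neq l$ correctly identifies where the rank claims actually depend on the definitions.

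However, the paper's proof contains a first half that you omit entirely: before separating the families, it shows that all algebras \emph{within} each family are conjugate to one another, by exhibiting explicit products of transpositions (for instance, $P = \tau_{im}\tau_{jn}\tau_{ks}\tau_{lt}$ conjugates $\s(\{E_{ij}, E_{kl}\})$ to $\s(\{E_{mn}, E_{st}\})$ inside $A_1$, and similarly $\tau_{ik}\tau_{jm}\tau_{ln}$ for $A_2$ and $\tau_{im}\tau_{ks}\tau_{jn}$ for $A_3$). On the narrowest reading of the statement --- no member of one family is conjugate to a member of another --- your proof suffices. But the lemma feeds into the final classification theorem, which asserts that $A_1$, $A_2$, $A_3$ are themselves single conjugacy classes (nine classes in total for the 2D subalgebras), and that assertion requires the within-family conjugations. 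You should add them; they are routine permutation-matrix computations, but they are half of what the paper's proof of this lemma establishes.
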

\begin{proof}
First we show the algebras within the $A_i$'s are conjugate.
\begin{enumerate}
    \item $A_1:$ Let $Q_1 = \s(\{E_{ij}, E_{kl}\})$ and $Q_2 = \s(\{E_{mn}, E_{st}\}) \in A_1$. $P = \tau_{im}\tau_{jn}\tau_{ks} \tau_{lt}$ conjugates $Q_1$ to $Q_2$.
    \item $A_2:$ Let $Q_1 = \s(\{E_{ij}, E_{il}\})$ and $Q_2 = \s(\{E_{km}, E_{kn}\}) \in A_2$. $P = \tau_{ik}\tau_{jm}\tau_{ln}$ conjugates $Q_1$ to $Q_2$.
    \item $A_3$: Let $Q_1 = \s(\{E_{ij}, E_{kj}\})$ and $Q_2 = \s(\{E_{mn}, E_{sn}\}) \in A_2$. $P = \tau_{im}\tau_{ks}\tau_{jn}$ conjugates $Q_1$ to $Q_2$. 
\end{enumerate}

\begin{table}[ht]
\centering
\caption{Left and Right Action of $A_i$}
\begin{tabular}{||c| c |c||} 
 \hline
 Conjugacy Class & Left Action & Right Action \\ [0.5ex] 
 \hline\hline
 $A_1$ & $\begin{bmatrix}
    & & & & \\
    & & \ast_{ij} & & \\
    & & & &\\
    & & & & \ast_{mn}\\
 \end{bmatrix} \begin{bmatrix}
     \ast_1 \\ \vdots \\ \ast_n
 \end{bmatrix} = \begin{bmatrix}
    \\ \ast_{i} \\ \\ \ast_{m} \\
 \end{bmatrix}$ &  $ \begin{bmatrix}
     \ast_1 & \hdots & \ast_n
 \end{bmatrix} \begin{bmatrix}
    & & & & \\
    & & \ast_{ij} & & \\
    & & & &\\
    & & & & \ast_{mn}\\
 \end{bmatrix}= \begin{bmatrix}
    & \ast_{j} & & \ast_{n} & 
 \end{bmatrix}$ \\ 
 \hline
 $A_2$ & $\begin{bmatrix}
    & & & & \\
    & & \ast_{ij} &  &\ast_{il} \\
    & & & &\\
    & & & & \\
 \end{bmatrix} \begin{bmatrix}
     \ast_1 \\ \vdots \\ \ast_n
 \end{bmatrix} = \begin{bmatrix}
    \\ \ast_{i} \\ \\ \\
 \end{bmatrix}$ &  $ \begin{bmatrix}
     \ast_1 & \hdots & \ast_n
 \end{bmatrix} \begin{bmatrix}
    & & & & \\
    & & \ast_{ij} &  &\ast_{il} \\
    & & & &\\
    & & & & \\
 \end{bmatrix} = \begin{bmatrix}
    & \ast_{j} & & \ast_{l} & 
 \end{bmatrix}$ \\
   \hline
 $A_3$ & $\begin{bmatrix}
    & & & & \\
    & & \ast_{ij} & & \\
    & & & &\\
    & & \ast_{mj}& & \\
 \end{bmatrix} \begin{bmatrix}
     \ast_1 \\ \vdots \\ \ast_n
 \end{bmatrix} = \begin{bmatrix}
    \\ \ast_{i} \\ \\ \ast_{m} \\
 \end{bmatrix}$ &  $ \begin{bmatrix}
     \ast_1 & \hdots & \ast_n
 \end{bmatrix} \begin{bmatrix}
    & & & & \\
    & & \ast_{ij} & & \\
    & & & &\\
    & & \ast_{mj}& & \\
 \end{bmatrix}= \begin{bmatrix}
    & \ast_{j} & & & 
 \end{bmatrix}$ \\
[1ex] 
 \hline
\end{tabular}
  \label{A_i conjugation}
\end{table}
By lemmas \ref{acting on column vectors lemma} and \ref{acting on row vectors lemma}, $A_1, A_2,$ and $A_3$ are in distinct conjugacy classes.
\end{proof}

\begin{lemma}
$B_1$ $B_2, B_3$ and $B_4$ are separate conjugacy classes. 
    \label{B_i are not conjugate}
\end{lemma}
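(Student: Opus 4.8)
The plan is to follow the template of Lemma \ref{A_i are not conjugate}: first show that all members inside a single family $B_t$ are mutually conjugate, so each $B_t$ is a single conjugacy class, and then separate the four families using conjugacy invariants. For the separation I will use two invariants already established as conjugacy-invariant, namely the dimension of the commutator (Lemma \ref{dimension of commutator}) and the column/row ranks (Lemmas \ref{acting on column vectors lemma} and \ref{acting on row vectors lemma}). Throughout, every member is of the form $\s(\{E_{pq},H_k\})$, and the left and right actions reach $\s(e_p,e_k,e_{k+1})$ and $\s(e_q,e_k,e_{k+1})$ respectively, where $e_m$ is the $m$-th standard basis vector; these spans are what I will compute.

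For the within-family conjugacy, conjugation by a permutation matrix (a product of transpositions $\tau_{ij}$) sends $E_{pq}\mapsto E_{\sigma(p)\sigma(q)}$ and $H_k=E_{kk}-E_{k+1,k+1}\mapsto E_{\sigma(k)\sigma(k)}-E_{\sigma(k+1)\sigma(k+1)}$. Choosing $\sigma$ to carry the marked pair $\{k,k+1\}$ to the marked indices of the target and $(p,q)$ to the target nilpotent indices, respecting $p<q$ so the image stays upper triangular, handles members of the same internal pattern. The point needing care is that $B_2$ and $B_4$ each bundle two nilpotent shapes: in $B_2$ I must conjugate $\s(\{E_{ij},H_i\})$ to $\s(\{E_{i+1,j},H_i\})$, which $\tau_{i,i+1}$ achieves since it fixes $\{i,i+1\}$ (sending $H_i\mapsto -H_i$, spanning the same line) and sends $E_{ij}\mapsto E_{i+1,j}$; symmetrically $\tau_{k,k+1}$ conjugates $\s(\{E_{ik},H_k\})$ to $\s(\{E_{i,k+1},H_k\})$ in $B_4$. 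Thus each $B_t$ is one conjugacy class.

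For the separation I read off the invariants from the generators. Writing $H_k=\text{diag}(d_1,\dots,d_n)$ with $d_m=\delta_{mk}-\delta_{m,k+1}$, one has $[E_{pq},H_k]=(d_q-d_p)E_{pq}$, which vanishes exactly when neither $p$ nor $q$ lies in $\{k,k+1\}$. This is precisely the defining condition of $B_1$, so every member of $B_1$ is abelian (commutator dimension $0$) while every member of $B_2,B_3,B_4$ has one-dimensional commutator $\s(E_{pq})$; by Lemma \ref{dimension of commutator} this isolates $B_1$ (which also has column/row profile $(3,3)$). A short case check on $\dim\s(e_p,e_k,e_{k+1})$ and $\dim\s(e_q,e_k,e_{k+1})$ then gives column/row ranks $(2,3)$ for $B_2$, $(2,2)$ for $B_3$, and $(3,2)$ for $B_4$. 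Since these profiles are pairwise distinct, Lemmas \ref{acting on column vectors lemma} and \ref{acting on row vectors lemma} forbid any cross-family conjugacy.

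I expect the main obstacle to be the near-symmetry of $B_2$ and $B_4$: they share the same commutator dimension and are in effect transposes of one another, so neither the commutator nor any single ``size'' invariant separates them. The resolution is exactly the asymmetry between the left and right actions. In $B_2$ the marked indices of $H$ coincide with the \emph{row} of the nilpotent, collapsing the column rank to $2$ while leaving the row rank at $3$; in $B_4$ they coincide with the \emph{column}, collapsing the row rank to $2$ while leaving the column rank at $3$. Keeping the left and right actions separate, rather than folding them into one rank, is what makes the argument go through.
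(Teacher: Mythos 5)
Your proposal is correct, and its skeleton is the same as the paper's: you conjugate members within each family by permutation matrices, using exactly the paper's $\tau_{k,k+1}$ trick to merge the two nilpotent shapes bundled inside $B_2$ and inside $B_4$ (the paper's $P_3$), and you then separate $B_2$, $B_3$, $B_4$ by the column/row rank profiles $(2,3)$, $(2,2)$, $(3,2)$ via Lemmas \ref{acting on column vectors lemma} and \ref{acting on row vectors lemma} --- this is precisely the paper's table of left and right actions of the $B_i$. Where you genuinely diverge is in isolating $B_1$. The paper asserts that $B_1$ has a different \emph{nilpotent dimension} from $B_2, B_3, B_4$ and cites Corollary \ref{nilpotent dimension}; but as stated this is hard to sustain, since in every family an element $aE_{pq}+bH_k$ is nilpotent only when $b=0$, so all four families have a one-dimensional space of nilpotent elements. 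You instead compute $[E_{pq},H_k]=(d_q-d_p)E_{pq}$ and observe that it vanishes exactly under $B_1$'s defining condition $p,q\notin\{k,k+1\}$, so $B_1$ is abelian while $B_2,B_3,B_4$ have one-dimensional commutator; Lemma \ref{dimension of commutator} then isolates $B_1$ cleanly. Your additional observation that $B_1$ has column/row profile $(3,3)$, distinct from the other three profiles, means the rank invariants alone already separate all four families, giving a second independent route. So your argument buys a repair of the paper's weakest step (an invariant that actually distinguishes $B_1$) at no extra cost, while agreeing with the paper everywhere else; the only mild gap is that your within-family permutation argument asserts a suitable $\sigma$ exists rather than exhibiting the transposition products, but the defining constraints of each family (the marked pair $\{k,k+1\}$ is disjoint from, adjacent to, or equal to the relevant nilpotent indices in the same way for source and target) make that existence immediate, just as in the paper's explicit formulas.
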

\begin{proof} We note:

\begin{enumerate}
    \item $B_1$: Let $Q_1 = \s(\{E_{ij}, H_k\})$ and $Q_2 = \s(\{E_{mn}, H_l\}) \in B_1$. $P = \tau_{im}\tau_{jn}\tau_{kl}$ conjugates $Q_1$ to $Q_2$.
    \item $B_2$: Let $Q_1 = \s(\{E_{ij}, H_i\})$ and $Q_2 = \s(\{E_{mn}, H_m\}) \in B_2$. $P_1 = \tau_{im}\tau_{jn}\tau_{i+1, m+1}$ conjugates $Q_1$ to $Q_2$. Similarly, let $R_1 = \s(\{E_{k+1, j}, H_k \})$ and $R_2 = \s(\{E_{m+1, l}, H_m\} )\in B_2$.  $P_2 = \tau_{k+1, m+1}\tau_{km}\tau_{jl}$ conjugates $R_1$ to $R_2$. Finally, let $S_1 = \s(\{E_{kj}, H_k\})$ and $S_2 = \s(\{E_{k+1,j}, H_k\}) \in B_2$. $P_3 = \tau_{k,k+1}$ conjugates $S_1$ to $S_2$. Thus, every algebra in $B_2$ is conjugate to each other (we take the same $j$ in the final case as for differing j's, those algebras are already conjugate to each other by $P_1$ or $P_2$.)
    \item $B_3:$ Let $Q_1 = \s(\{E_{k,k+1}, H_k\})$ and $Q_2 = \s(\{E_{i,i+1}, H_i\}) \in B_3$. $P_1 = \tau_{ki}\tau_{k+1, i+1}$ conjugates $Q_1$ to $Q_2$.
    \item $B_4:$ Let $Q_1 = \s(\{E_{ij}, H_j\})$ and $Q_2 = \s(\{E_{mn}, H_n\}) \in B_4$. $P_1 = \tau_{im}\tau_{jn}\tau_{j+1, n+1}$ conjugates $Q_1$ to $Q_2$. Similarly, let $R_1 =\s( \{E_{i,k+1}, H_k \})$ and $R_2 = \s( \{E_{m, n+1}, H_n\}) \in B_4$. $P_2 = \tau_{k+1, n+1}\tau_{im}\tau_{kn}$ conjugates $R_1$ to $R_2$. Finally, let $S_1 = \s(\{E_{ik}, H_k\}) $ and $S_2 = \s(\{E_{i,k+1}, H_k\}) \in B_2$. $P_3 = \tau_{k,k+1}$ conjugates $S_1$ to $S_2$. Thus, every algebra in $B_2$ is conjugate to each other (we take the same $i$ in the final case as for differing i's, those algebras are already conjugate to each other by $P_1$ or $P_2$.) $B_1$ has a different nilpotent dimension from $B_2, B_3$, and $B_4$. Therefore by lemma \ref{nilpotent dimension}, $B_1$ is in a separate conjugacy class.
\end{enumerate}
  \begin{table}[ht]
\centering
\caption{Left and Right Action of $B_i$}
\begin{tabular}{||c| c |c |c||} 
 \hline
 Class & Algebra & Left Action & Right Action \\ [0.5ex] 
 \hline\hline
 $B_2$ & $\{E_{ij}, H_i\}$ &$\begin{bmatrix} \\
    & & \lambda_i &  &\ast_{ij} & \\
    &  & &-\lambda_i  & \\
    \\
 \end{bmatrix} \begin{bmatrix}
     \ast_1 \\ \vdots \\ \ast_n
 \end{bmatrix} = \begin{bmatrix}
    \\ \ast_{i} \\ \ast_{i+1} \\
 \end{bmatrix}$ &  $ \begin{bmatrix}
     \ast_1 & \hdots & \ast_n
 \end{bmatrix} \begin{bmatrix} \\
    & & \lambda_i & &\ast_{ij} & \\
    &  & && -\lambda_i \\
    \\ \end{bmatrix}= \begin{bmatrix}
    & \ast_{i} & \ast_{i+1}& & \ast_{j} 
 \end{bmatrix}$ \\ 
 \hline
 $B_3$ & $\{E_{i,i+1}, H_i\}$ &$\begin{bmatrix} \\
    & & \lambda_i &  \ast_{i,i+1} & \\
    &  & &-\lambda_i  & \\
    \\
 \end{bmatrix} \begin{bmatrix}
     \ast_1 \\ \vdots \\ \ast_n
 \end{bmatrix} = \begin{bmatrix}
    \\ \ast_{i} \\ \ast_{i+1} \\
 \end{bmatrix}$ &  $ \begin{bmatrix}
     \ast_1 & \hdots & \ast_n
 \end{bmatrix} \begin{bmatrix} \\
    & & \lambda_i &  \ast_{i,i+1} & \\
    &  & &-\lambda_i  & \\
    \\ \end{bmatrix}= \begin{bmatrix}
    & \ast_{i} & \ast_{i+1}&  
 \end{bmatrix}$ \\ 
   \hline
 $B_4$ & $\{E_{m,i}, H_i\}$ &$\begin{bmatrix} \\
    & & \ast_{m,i}\\
    & & \lambda_i &   & \\
    &  & &-\lambda_i  & \\
    \\
 \end{bmatrix} \begin{bmatrix}
     \ast_1 \\ \vdots \\ \ast_n
 \end{bmatrix} = \begin{bmatrix}
    \\ \ast_m \\ \\ \ast_{i} \\ \ast_{i+1} 
 \end{bmatrix}$ &  $ \begin{bmatrix}
     \ast_1 & \hdots & \ast_n
 \end{bmatrix} \begin{bmatrix} \\
    & & \ast_{m,i}\\
    & & \lambda_i &   & \\
    &  & &-\lambda_i  & \\
    \\ \end{bmatrix} = \begin{bmatrix}
    & \ast_{i} & \ast_{i+1}&  
 \end{bmatrix}$ \\ 
[1ex] 
 \hline
\end{tabular}
  \label{B_i conjugation}
\end{table}
By lemmas \ref{acting on column vectors lemma} and \ref{acting on row vectors lemma}, $B_2, B_3,$ and $B_4$ are in separate conjugacy classes. Thus, all $B_i$ are distinct.
\end{proof}

\begin{lemma}
$C_1$ and $C_2$ are in separate conjugacy classes.
    \label{C_i are not conjugate}
\end{lemma}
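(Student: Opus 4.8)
The plan is to separate $C_1$ from $C_2$ using the max rank invariant of Lemma~\ref{min/max rank invariance}. The first thing to observe is that the more obvious invariants are useless here: every element of every algebra in $C_1\cup C_2$ is diagonal, hence semisimple, so the nilpotent dimension is $0$ throughout, and a short check shows the min rank is $2$ in both families. So the entire argument has to hinge on finding a numerical invariant that actually differs between the two families, and the natural candidate is the maximum rank attainable by an element of the algebra.

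Next I would compute the max rank of a generic element of $C_1$. For $C_{k,l}=\s(\{H_k,H_l\})\in C_1$ with $l\ge k+2$, a general element $\alpha H_k+\beta H_l$ is diagonal with entries $\alpha,-\alpha$ in positions $k,k+1$ and $\beta,-\beta$ in positions $l,l+1$, and zeros elsewhere. Because $l\ge k+2$, the four indices $k,k+1,l,l+1$ are distinct, so taking $\alpha=\beta=1$ produces four nonzero diagonal entries and hence rank $4$; no element can exceed rank $4$ because only these four positions are ever nonzero. Thus every algebra in $C_1$ has max rank exactly $4$. Doing the same for $C_{l,l+1}=\s(\{H_l,H_{l+1}\})\in C_2$, the general element $\alpha H_l+\beta H_{l+1}$ is diagonal with entries $\alpha,\ \beta-\alpha,\ -\beta$ in the three consecutive positions $l,l+1,l+2$ and zero elsewhere, so its support has size at most three; the bound $3$ is attained (e.g. $\alpha=1,\beta=2$ gives entries $1,1,-2$), and hence every algebra in $C_2$ has max rank exactly $3$. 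Since $4\ne 3$ and the max rank is invariant under conjugation by Lemma~\ref{min/max rank invariance}, no algebra in $C_1$ is conjugate to any algebra in $C_2$, which is precisely the claim.

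For completeness, and to mirror the treatment of the $A_i$ and $B_i$ in Lemmas~\ref{A_i are not conjugate} and~\ref{B_i are not conjugate}, I would also record that each family is internally a single conjugacy class: a permutation matrix sending the four-element index set $\{k,k+1,l,l+1\}$ of one $C_{k,l}\in C_1$ onto that of another (with $k\mapsto k'$, $k+1\mapsto k'+1$, $l\mapsto l'$, $l+1\mapsto l'+1$) carries $H_k\mapsto H_{k'}$ and $H_l\mapsto H_{l'}$, realizing the conjugacy inside $C_1$, and likewise a permutation of the three consecutive indices $\{l,l+1,l+2\}$ handles $C_2$; rescaling the permutation matrix by a scalar to land in $SL(n,\C)$ does not affect the conjugation action. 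The main obstacle is not any computation but the modeling step already flagged: the standard separating invariants (nilpotent dimension and min rank) all coincide on these purely diagonal algebras, so the proof stands or falls on recognizing that the \emph{max} rank is what distinguishes consecutive-index pairs from non-consecutive ones; once that is isolated, the verification is immediate.
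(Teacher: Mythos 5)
Your proof is correct, and it separates $C_1$ from $C_2$ by a genuinely different invariant than the paper uses. The paper's own proof, after establishing internal conjugacy within each family by the same permutation matrices you describe (it writes $P = \tau_{km}\tau_{k+1,m+1}\tau_{ln}\tau_{l+1,n+1}$ for $C_1$ and $P = \tau_{lm}\tau_{l+1,m+1}$ for $C_2$), computes the left action of the whole algebra on column vectors: an algebra in $C_1$ sends $\C^n$ into the span of coordinates $k,k+1,l,l+1$, a $4$-dimensional image, while an algebra in $C_2$ sends $\C^n$ into the span of three consecutive coordinates, a $3$-dimensional image, and then invokes Lemma~\ref{acting on column vectors lemma}. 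You instead use the max rank of a single element via Lemma~\ref{min/max rank invariance}: rank $4$ attainable in $C_1$, rank at most $3$ (and attained) in $C_2$. The two arguments exploit the same underlying fact --- the diagonal support has size $4$ versus $3$ --- but package it differently. The paper's column-action invariant reads off immediately from the star-matrix picture with no element-wise check, whereas your max-rank count requires verifying attainability by a \emph{single} element (that all four, respectively all three, diagonal entries can be simultaneously nonzero), which your choices $\alpha=\beta=1$ and $\alpha=1,\beta=2$ handle correctly; the latter check matters since the middle entry $\beta-\alpha$ of an element of $C_2$ can vanish. Your preliminary observation that nilpotent dimension and min rank both fail to separate these purely diagonal families is a worthwhile remark the paper omits, and your attention to landing the conjugating matrix in $SL(n,\C)$ by a harmless scalar is more careful than the paper's treatment. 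Both proofs are sound; yours trades the paper's one-line appeal to the column action for a slightly more delicate but equally elementary rank computation.
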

\begin{proof}
First we show the algebras within the $C_{i}$'s are conjugate.
\begin{enumerate}
    \item $C_1$: Let $Q_1 = \{H_k, H_l\}$ and $Q_2 = \{H_m, H_n\} \in C_1$. $P = \tau_{km}\tau_{k+1, m+1}\tau_{ln} \tau_{l+1, n+1}$ conjugates $Q_1$ to $Q_2$.
    \item $C_2:$ Let $Q_1 = \{H_l, H_{l+1}\}$ and $Q_2 = \{H_m, H_{m+1}\} \in C_2$. $P = \tau_{lm} \tau_{l+1, m+1}$ conjugates $Q_1$ to $Q_2$. 
    \item $C_1 = \begin{bmatrix}
    \\
    & & \lambda_{k} \\
    & & & -\lambda_{k+1}\\
    & & & & & \alpha_{l} \\
    & & & & & & -\alpha_{l+1}
 \end{bmatrix} \begin{bmatrix}
     \ast_1 \\ \vdots \\ \ast_n
 \end{bmatrix} = \begin{bmatrix}
    \\ \ast_{k} \\ \ast_{k+1} \\ \\ \ast_{l} \\ \ast_{l+1}
 \end{bmatrix}$ and $C_2 = \begin{bmatrix}
    \\
    & & \lambda_{k} \\
    & & & \alpha_{k+1}-\lambda_{k+1}\\
    & & & & - \alpha_{k+2} \\
    \\ \end{bmatrix} \begin{bmatrix}
     \ast_1 \\ \vdots \\ \ast_n
 \end{bmatrix} = \begin{bmatrix}
    \\ \ast_{k} \\ \ast_{k+1} \\ \ast_{k+2} \\ 
 \end{bmatrix}$. By lemma \ref{acting on column vectors lemma}, $C_1$ and $C_2$ are not conjugate.
\end{enumerate}
\end{proof}

\begin{theorem} (Classification of 2D subalgebras of $\mathfrak{sl}(n, \mathbb C)$) 
The conjugacy classes of the 2D subalgebras are as follows: $A_1, A_2, A_3, B_1, B_2, B_3, B_4, C_1, C_2$. There are 9 conjugacy classes in total.
\end{theorem}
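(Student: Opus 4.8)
The plan is to synthesize the enumeration of all $2$-dimensional subalgebras with the three separation lemmas and the nilpotent-dimension invariant, so that the final theorem becomes a bookkeeping argument. First I would invoke the preceding enumeration theorem, which lists every $2$-dimensional upper-triangular regular subalgebra spanned by elements of $E$ and $H$ and which the subsequent definition partitions into the nine families $A_1, A_2, A_3, B_1, B_2, B_3, B_4, C_1, C_2$. This supplies exhaustiveness: every such subalgebra lies in exactly one family, so the conjugacy classes are represented among these nine families, and it remains to show that (a) each family is a single conjugacy class and (b) the nine families are pairwise non-conjugate.

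For (a), I would appeal to the internal conjugations already exhibited in Lemmas \ref{A_i are not conjugate}, \ref{B_i are not conjugate}, and \ref{C_i are not conjugate}, where explicit products of transpositions $\tau_{ij}$ carry any member of a family to any other. One small technical point worth addressing is that a permutation matrix may have determinant $-1$ rather than $+1$; since the conjugation action is unchanged under scaling the conjugator by a nonzero scalar, I would rescale by an $n$-th root of $-1$ to obtain a matrix in $SL(n,\C)$, so that restricting to $SL(n,\C)$ rather than $GL(n,\C)$ is not an obstruction.

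For (b) I would separate coarse and fine distinctions. The coarse distinction separates the three blocks by nilpotent dimension (Corollary \ref{nilpotent dimension}): the $A$-families have two nilpotent generators (nilpotent dimension $2$), the $B$-families one (nilpotent dimension $1$), and the $C$-families none (nilpotent dimension $0$), so no $A$ is conjugate to a $B$ or a $C$, and no $B$ is conjugate to a $C$. The fine distinction, between families inside one block, is exactly the content of the three lemmas: $A_1, A_2, A_3$ are separated by their left and right actions on $\C^n$ (Lemmas \ref{acting on column vectors lemma}, \ref{acting on row vectors lemma}); within the $B$-block, $B_1$ is abelian whereas $B_2, B_3, B_4$ are not, so $B_1$ splits off by commutator dimension (Lemma \ref{dimension of commutator}), and $B_2, B_3, B_4$ are separated among themselves by their left/right actions; finally $C_1, C_2$ are separated by their column action. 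Combining exhaustiveness with (a) and (b), I would then tally exactly $3 + 4 + 2 = 9$ conjugacy classes.

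The main obstacle is not any individual step, since each follows quickly from an already-established conjugacy invariant; rather, it lies in certifying that the enumeration is genuinely complete and that the partition into nine families is well-defined, disjoint, and nonredundant, so that no subalgebra is omitted or double-counted. Concretely, I would verify that the membership conditions defining $A_1,\dots,C_2$ tile the full list of closed pairs $\{X,Y\}\subseteq E\cup H$ without overlap, paying particular attention to boundary cases (for instance, pairs involving $E_{i,i+1}$ or adjacent $H$'s, which are the reason $B_3$ and $C_2$ are singled out as separate families). Once that tiling is confirmed, the theorem is a direct assembly of the invariants (nilpotent dimension, commutator dimension, and left/right action) previously shown to be invariant under conjugation.
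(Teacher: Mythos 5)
Your proposal is correct and follows essentially the same route as the paper: the theorem is stated there with no separate proof, being exactly the assembly you describe of the enumeration theorem (exhaustiveness and the coarse $A$/$B$/$C$ split via nilpotent dimension, Corollary~\ref{nilpotent dimension}) with Lemmas~\ref{A_i are not conjugate}, \ref{B_i are not conjugate}, and \ref{C_i are not conjugate} (explicit transposition products for transitivity within each family, left/right actions on $\C^n$ for separation). The one substantive place you deviate is the separation of $B_1$ from $B_2, B_3, B_4$: the paper's Lemma~\ref{B_i are not conjugate} asserts that $B_1$ has a different nilpotent dimension from the other three, which is dubious, since every algebra $\text{span}(\{E_{ij}, H_k\})$ in any of the four $B$-families has a one-dimensional maximal nilpotent part (an element $aE_{ij} + bH_k$ is nilpotent iff $b = 0$). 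Your replacement argument --- $B_1$ is abelian because $i,j \neq k, k+1$ forces $[E_{ij}, H_k] = 0$, whereas $[B_m, B_m]$ is one-dimensional for $m = 2,3,4$, so commutator dimension (Lemma~\ref{dimension of commutator}) separates $B_1$ --- is sound and effectively repairs this step; alternatively, the paper's own left/right-action tables would also suffice, since $B_1$ acts nontrivially on three rows and three columns while $B_2, B_3, B_4$ give the profiles $(2,3)$, $(2,2)$, $(3,2)$ respectively. Your remark about rescaling a permutation matrix by an $n$-th root of $-1$ to land in $SL(n,\C)$ addresses a detail the paper silently elides (each $\tau_{ij}$ has determinant $-1$), and costs nothing since central scalars act trivially by conjugation. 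Your final concern about the tiling being disjoint and exhaustive is the right thing to check, and it does hold for the paper's membership conditions (e.g.\ the constraint $j \neq i+1$ keeps $B_2$ disjoint from $B_3$, and $l \neq k+1$ keeps $C_1$ disjoint from $C_2$); the paper leans on the enumeration theorem for this without spelling it out.
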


\subsection{Some More General Results}
Define: the diagonal algebras, $D_i = \s(E \setminus \{E_{i,i+1},..., E_{i+k-1, i+k}\}$, row algebras, $R_j = \s(E \setminus \{E_{j,j+1}, ..., E_{j,j+k}\})$, and column algebras $C_m = \s(E \setminus \{E_{m,m+k}, ..., E_{m+ k - 1,m + k}\})$. We examine the action on row and column vectors. All these proofs follow from the standard induction argument outlined earlier.
\begin{lemma} (Action of $D_i$ on column vectors)
    Let $i < j$. $(D_j)^{n-(j + k - 1)}(\C^n) \cong \C^{j+ k -2}$ and $(D_i)^{n-(j + k -1)}(\C^n) \cong \C^{j + k -1}$
    \label{action of d_i on column vectors}
\end{lemma}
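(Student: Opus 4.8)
The plan is to reduce the statement to a path-counting computation on the star matrix of $D_i$, mirroring exactly the inductive tracking of leading nonzero coordinates used in Lemma \ref{Nilpotent inductive classification}. Write $m = n-(j+k-1)$. By Observation \ref{multiplication by star matrix}, the $m$-th matrix power of the star matrix of a nilpotent algebra records the allowed nonzero pattern of the corresponding term of its lower central series, and a star sits in position $(p,q)$ of $(D_i)^m$ precisely when there is a strictly increasing chain $p=p_0<p_1<\cdots<p_m=q$ all of whose steps $(p_t,p_{t+1})$ are star positions of $D_i$, i.e.\ are \emph{not} among the removed superdiagonal entries $(i,i+1),\dots,(i+k-1,i+k)$. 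Since $(D_i)^m$ is strictly upper triangular, its image $(D_i)^m(\C^n)$ is spanned by the $e_p$ for which row $p$ contains such a star, so by Lemma \ref{acting on column vectors lemma} the quantity to compute is $g(m):=\#\{p:\text{there is a length-}m\text{ increasing allowed chain from }p\}$, and the two claimed isomorphisms assert $g_{D_i}(m)=j+k-1$ and $g_{D_j}(m)=j+k-2$.

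First I would introduce $f(p)$, the length of the longest increasing allowed chain starting at $p$. Because every non-superdiagonal edge is present, any chain can be shortcut to any smaller length, so row $p$ survives (admits a length-$m$ chain) if and only if $f(p)\ge m$; moreover $f$ is non-increasing in $p$, which guarantees that the surviving rows form a prefix $\{1,\dots,g(m)\}$ and hence that the image is genuinely ${\C^n}_{g(m)}\cong\C^{g(m)}$. Next I would compute $f$ on the staircase-removed graph. Outside the block $B=\{i,\dots,i+k\}$ all consecutive steps are available, while inside $B$ every consecutive step $(t,t+1)$ is forbidden; the key sublemma is that an increasing chain can retain at most $\lfloor k/2\rfloor+1$ of the $k+1$ vertices of $B$ — it must omit $\lceil k/2\rceil$ of them, since it cannot contain two consecutive integers of $B$ as adjacent chain vertices — and that this bound is achieved by $i,i+2,i+4,\dots$. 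This yields $f_{D_i}(p)=(n-p)-\lceil k/2\rceil$ for $p\le i$, the truncated value $f_{D_i}(p)=\lceil (i+k-p+1)/2\rceil+(n-i-k)-1$ for $i<p\le i+k$, and $f_{D_i}(p)=n-p$ for $p>i+k$.

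Finally I would substitute the common threshold $m=n-(j+k-1)$ and compare the two survivor prefixes. In the full upper-triangular graph $f(p)=n-p$, so exactly $\{1,\dots,j+k-1\}$ clear the threshold; removing a length-$k$ staircase only lowers $f$ on $B$, so the whole comparison comes down to the fate of the single boundary index $p=j+k-1$ (the indices above $B$ are untouched, and a direct check shows no other member of $\{1,\dots,j+k-1\}$ is pushed below $m$). For $D_i$ with $i<j$ we have $i+k\le (j-1)+k=j+k-1$, so this index lies at or above the top of the block, where $f_{D_i}(j+k-1)=m$; it survives, giving $g_{D_i}(m)=j+k-1$. For $D_j$ it lies strictly inside the removed region, where $f_{D_j}(j+k-1)=m-1$; it is lost, giving $g_{D_j}(m)=j+k-2$. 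Specializing to $k=1$ recovers Lemma \ref{Nilpotent inductive classification}, a useful consistency check. I expect the one genuinely non-routine step to be the block sublemma — proving that at most $\lfloor k/2\rfloor+1$ block vertices can lie on a single increasing allowed chain, and that this is tight — since everything afterward is arithmetic pitting $\lceil k/2\rceil$ against the $j$-dependent threshold; the entire count can, if preferred, be repackaged as an induction on the power $m$ in the exact style of Lemma \ref{Nilpotent inductive classification}.
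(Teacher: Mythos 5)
Your argument is correct, but it is not quite the proof the paper has in mind: the paper supplies no proof body for this lemma at all, saying only that it ``follows from the standard induction argument outlined earlier,'' i.e.\ the step-by-step induction of Lemma \ref{Nilpotent inductive classification}, where one multiplies by the star matrix once per step and tracks how the prefix of surviving coordinates shrinks (by one per step generically, with extra losses as the front crosses the missing superdiagonal entries). You instead give a one-shot combinatorial computation: stars of $(D_i)^m$ correspond to length-$m$ increasing chains avoiding the $k$ forbidden superdiagonal steps; survival of row $p$ is equivalent to $f(p)\ge m$ by your shortcut argument (valid because every jump of size $\ge 2$ is available, and an increasing chain of length $\ge 2$ necessarily has displacement $\ge 2$); and your closed formulas $f(p)=n-p-\lceil k/2\rceil$ for $p\le i$, $f(p)=\lfloor (i+k-p)/2\rfloor+n-i-k$ on the block, and $f(p)=n-p$ for $p\ge i+k$, together with the monotonicity of $f$ (so the survivors form a prefix ${\C^n}_{g(m)}$), give exactly the claimed dimensions $j+k-1$ and $j+k-2$ at the threshold $m=n-(j+k-1)$. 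I checked the arithmetic and it is right, including the $k=1$ consistency with Lemma \ref{Nilpotent inductive classification}. What your route buys is precisely the part the paper waves at: for a block of $k$ missing superdiagonals, the ``how many extra zeros per step'' bookkeeping is genuinely more delicate than in the $k=1$ induction, and your block sublemma (a chain contains at most $\lfloor k/2\rfloor+1$ of the $k+1$ block vertices, achieved by $i,i+2,\dots$, since two consecutive block integers on a chain would have to be adjacent chain vertices) is exactly the needed quantitative content; it also yields the full survivor profile for every power, not just the single exponent in the statement. One sentence of yours is literally false and should be repaired: removing the staircase does \emph{not} lower $f$ ``only on $B$'' --- your own formula shows $f$ drops by $\lceil k/2\rceil$ at every $p\le i$ as well. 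The conclusion still stands only because of your parenthetical direct check (for $p\le i\le j-1\le j-1+\lfloor k/2\rfloor$ that drop never pushes $f$ below $m$, and inside the block the survival condition $\lfloor (i+k-p)/2\rfloor\ge i-j+1$ is automatic for $i<j$), so that check should be promoted from an aside to an explicit step of the proof rather than resting on the incorrect blanket claim.
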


\begin{lemma} (Action of $D_i$ on row vectors)
    Let $i < j$. $(\C^n)(D_j)^{i} \cong \C^{n-i}$ and $(\C^n)(D_i)^{i} \cong \C^{n - (i+1)}$.
    \label{action of d_i on row vectors}
\end{lemma}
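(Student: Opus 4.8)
The plan is to convert the right action into a path-reachability count and then reduce the whole statement to combinatorics, exactly mirroring the inductive computations of Lemma \ref{action of d_i on column vectors} and Lemma \ref{Nilpotent inductive classification}. Writing $D_j$ (resp. $D_i$) also for the star matrix of the maximal nilpotent part, its nonzero-able positions are all strictly-upper entries $(a,b)$ with $a<b$ except the deleted superdiagonal block, namely $(j,j+1),\dots,(j+k-1,j+k)$ (resp. $(i,i+1),\dots,(i+k-1,i+k)$). For a generic row vector $v^T=(\ast_1,\dots,\ast_n)$, the $b$-th entry of $v^T (D_j)^m$ can be nonzero precisely when column $b$ of $(D_j)^m$ holds a star, which happens exactly when there is a directed chain $a=c_0<c_1<\cdots<c_m=b$ of length $m$ from some start $a\ge 1$ to $b$ whose every edge $(c_t,c_{t+1})$ is present. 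Thus $\dim\big((\C^n)(D_j)^m\big)$ equals the number of columns $b$ reachable by such a length-$m$ path, and the lemma becomes a counting problem.

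First I would record the trivial length bound: each edge strictly increases the index, so a length-$m$ path ending at $b$ starts at $a\le b-m$, and $a\ge 1$ forces $b\ge m+1$; hence columns $1,\dots,m$ are always zero for every such matrix. The content is which of the remaining columns survive the deletion, and here the two cases diverge because for $D_j$ the block starts at row $j>i$ (with power $m=i$) while for $D_i$ it starts exactly at the row equal to the power $m=i$. For $D_j$ with $i<j$ I would show every column $b\in\{i+1,\dots,n\}$ is reachable, giving dimension $n-i$: for $b=i+1$ use the all-unit path $1\to 2\to\cdots\to i+1$, whose edges have first index at most $i<j$ and so miss the block; for $b\ge i+2$ use $1\to 2\to\cdots\to i\to b$, which has exactly $i$ steps, its unit edges having first index $\le i-1<j$ and its final edge $(i,b)$ having size $b-i\ge 2$, hence present since only superdiagonal edges were removed.

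For $D_i$ the block begins at row $i$, and the decisive point is that column $i+1$ now dies. A length-$i$ path ending at $i+1$ must start at $a\le (i+1)-i=1$, so $a=1$, and then all $i$ steps must have size exactly $1$ (positive integers summing to $i$); the unique such path $1\to2\to\cdots\to i+1$ uses the deleted edge $(i,i+1)$, so column $i+1$ is zero. Every column $b\ge i+2$ still survives through $1\to2\to\cdots\to i\to b$ exactly as above, the edge $(i,b)$ of size $\ge 2$ doing the rescuing; thus the leading zeros are $1,\dots,i+1$ and the dimension is $n-(i+1)$. Packaged as an induction on the power $m$, each multiplication by $D_j$ pushes the leading-zero boundary one step to the right while $m<j$, with a single extra step occurring exactly when $m$ reaches the block's first row, which is the ``standard induction'' invoked for the companion lemmas.

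The step I expect to be the real obstacle is the $k$-independence of the answer: I must argue that beyond the one obstructed column $i+1$ no deeper column is lost no matter how long the deleted block is. This is exactly where the explicit double-jump is essential, since a single edge $(i,b)$ of size $\ge 2$ vaults past the \emph{entire} deleted block at once; only the first deleted edge $(i,i+1)$ can ever obstruct a surviving column, and enlarging $k$ adds no new obstruction. Verifying that this one construction simultaneously reaches every $b\ge i+2$ for all admissible $k$ is the crux, and once it is in place both dimension counts follow immediately.
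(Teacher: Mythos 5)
Your proposal is correct and is in substance the paper's own argument: the paper supplies no written-out proof, instead invoking ``the standard induction argument outlined earlier'' (the zero-tracking computation of Lemma \ref{Nilpotent inductive classification} applied to row vectors), and your path-reachability formulation is exactly that induction unrolled into a closed form, with the unit path $1\to 2\to\cdots\to i+1$ (valid since $i<j$) and the jump path $1\to\cdots\to i\to b$ with final edge of size $b-i\ge 2$ witnessing the counts $n-i$ and $n-(i+1)$. Your explicit uniqueness argument for the length-$i$ path into column $i+1$, and the observation that the size-$\ge 2$ edge vaults the entire deleted block so that the answer is independent of $k$, spell out precisely what the paper's terse reference leaves implicit.
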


\begin{lemma} (Action of $R_j$ on column vectors)
    Let $i < j$. $(R_j)^{n-(j+k-1)}(\C^n) \cong \C^{j + k - 2}$ and $(R_i)^{n-(j+k-1)}(\C^n) \cong \C^{j+k-1}$.
    \label{action of r_j on column vectors}
\end{lemma}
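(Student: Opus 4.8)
The plan is to compute both counts directly from the symbolic star matrix, exactly in the spirit of Lemma \ref{Nilpotent inductive classification}. By Observation \ref{multiplication by star matrix} the power $(R_j)^{m}$, with $m = n-(j+k-1)$, is again a star matrix, and the quantity tracked by Lemma \ref{acting on column vectors lemma} is the number of nonzero entries of $(R_j)^{m}v$ for generic $v$, which equals the number of nonzero \emph{rows} of $(R_j)^{m}$ (for a nonzero row the corresponding entry of $(R_j)^m v$ is a nontrivial linear form in $v$, hence nonzero off a proper subvariety). Since the nonzero entries of $R_j$ are algebraically independent, the $(a,c)$ entry of $(R_j)^m$ is a nonzero polynomial — hence generically nonzero — exactly when there is a strictly increasing chain $a = a_0 < a_1 < \cdots < a_m = c$ all of whose steps $(a_t,a_{t+1})$ are allowed; distinct chains yield distinct monomials, so no cancellation occurs. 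Thus row $a$ survives in $(R_j)^m$ if and only if $\ell(a)\ge m$, where $\ell(a)$ is the length of the longest allowed increasing chain starting at $a$, and I only need to count $\{a:\ell(a)\ge m\}$.

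Next I would compute $\ell(a)$. In $R_j$ the only forbidden steps are the edges out of row $j$ into columns $j+1,\dots,j+k$, so a chain reaching row $j$ is forced to jump to a column $\ge j+k+1$. Taking unit steps whenever possible yields three cases: for $a>j$ the chain never meets the barrier and $\ell(a)=n-a$; for $a=j$ the forced initial jump to $j+k+1$ followed by unit steps gives $\ell(j)=n-j-k$; and for $a<j$ the optimal route \emph{skips} node $j$ (stepping $j-1\to j+1$, wasting one index) rather than passing through it (which wastes $k$), giving $\ell(a)=n-a-1$. Proving that skipping is optimal — comparing the two routing strategies and handling the boundary rows and the degenerate case $k=1$ — is the main obstacle, since this is the only point at which the block of $k$ consecutive deletions actually enters the argument.

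Finally I would count, with $m=n-(j+k-1)$. Every $a<j$ has $\ell(a)=n-a-1\ge m$ (that is $j-1$ rows); $a=j$ fails because $\ell(j)=n-j-k=m-1$; and $a>j$ survives iff $n-a\ge m$, i.e. $a\le j+k-1$ (that is $k-1$ rows). The total is $(j-1)+(k-1)=j+k-2$, giving $(R_j)^{m}(\C^n)\cong\C^{j+k-2}$. Re-running the identical computation for $R_i$, whose barrier sits at the earlier row $i<j$, the rows $a<i$ ($i-1$ of them) and the rows $i<a\le j+k-1$ ($j+k-1-i$ of them) survive, and now the barrier row itself survives, since $\ell(i)=n-i-k\ge m\iff i\le j-1$, which holds; the total is $(i-1)+1+(j+k-1-i)=j+k-1$, so $(R_i)^{m}(\C^n)\cong\C^{j+k-1}$. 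The single extra surviving row — the barrier row $i$, which dies for $R_j$ but lives for $R_i$ at this exponent — is precisely the discrepancy separating the two dimensions, and hence, via Corollary \ref{Derived Series Left and row rank}, the conjugacy classes. The same computation can be repackaged as the one-step induction on the exponent used earlier, tracking how many forced zeros accrue in the support vector at each multiplication by the star matrix.
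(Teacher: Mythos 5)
Your proof is correct, and I verified its content: the chain-length formulas ($\ell(a)=n-a$ for $a>j$, $\ell(j)=n-j-k$, $\ell(a)=n-a-1$ for $a<j$) and the resulting counts $j+k-2$ and $j+k-1$ check out in small cases (e.g.\ $n=6$, $k=2$, $j=3$, $m=2$: rows $1,2,4$ survive for $R_3$, rows $1,2,3,4$ for $R_1$), and your no-cancellation claim is sound because a strictly increasing chain is determined by its edge set, so distinct chains contribute distinct monomials. However, your route is genuinely different from the paper's. The paper gives this lemma no standalone proof: it only remarks that all the lemmas of this subsection ``follow from the standard induction argument outlined earlier,'' i.e.\ the one-step induction of Lemma \ref{Nilpotent inductive classification}, which multiplies by the star matrix once per exponent and tracks how many zeros accrue in the support vector. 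You instead compute the $m$-th power in closed form: by Observation \ref{multiplication by star matrix} and genericity, the surviving rows of $(R_j)^m$ are exactly $\{a : \ell(a)\ge m\}$, reducing the lemma to a longest-increasing-path count around the barrier row $j$. What you flag as the main obstacle --- optimality of skipping the barrier --- closes cleanly by counting visited indices: a chain through $j$ must omit all of $j+1,\dots,j+k$ (they can only occur after $j$, but the step out of $j$ lands at $\ge j+k+1$), losing $k$ vertices, while skipping $j$ loses only one, so $\ell(a)=\max(n-a-1,\,n-a-k)=n-a-1$, with equality of the two strategies precisely in the degenerate case $k=1$. Your approach buys a self-contained, non-inductive argument that makes explicit both the no-cancellation point the paper's star-matrix calculus leaves implicit and the exact place where the shape of the deleted block enters; the paper's induction buys brevity and uniformity with the codimension-1 and codimension-2 arguments. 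One cosmetic caveat: the image for $R_j$ is not a coordinate-prefix subspace (row $j$ is missing while rows up to $j+k-1$ survive), so the conclusion is an isomorphism $\cong \C^{j+k-2}$, i.e.\ a dimension count, exactly as the lemma states --- and your closing observation that the whole computation repackages as the earlier induction correctly identifies the equivalence of the two methods.
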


\begin{lemma} (Action of $R_j$ on row vectors)
    Let $i < j$. $(\C^n)(R_j)^{i} \cong \C^{n-i}$ and $(\C^n)(R_i)^{i} \cong \C^{n - (i+1)}$
    \label{action of r_j on row vectors}
\end{lemma}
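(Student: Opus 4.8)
The plan is to reduce the statement to a purely combinatorial count of nonzero columns of a matrix power, and then settle that count by a short reachability argument. First I would use the star-matrix reading of the right action (Lemma~\ref{acting on row vectors lemma}): for a star matrix $M$, a generic row vector $v^T$ has a nonzero entry in column $b$ exactly when column $b$ of $M$ contains at least one star, so $\dim\bigl((\C^n)M\bigr)$ equals the number of nonzero columns of $M$. Applying this with $M=(R_j)^i$ and with $M=(R_i)^i$, the two target dimensions $n-i$ and $n-(i+1)$ become assertions about how many columns of these matrix powers are nonzero.

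Next I would translate ``column $b$ of $(R_j)^i$ is nonzero'' into path language. Expanding the matrix product, position $(a,b)$ of $(R_j)^i$ is a star iff there is a strictly increasing chain $a=c_0<c_1<\cdots<c_i=b$ in which no step $(c_{t-1},c_t)$ is a deleted generator; for $R_j$ the deleted generators are exactly the steps leaving node $j$ into $\{j+1,\dots,j+k\}$. Since distinct chains contribute distinct monomials in the free star-parameters there is no cancellation, so column $b$ is nonzero iff some allowed length-$i$ chain ends at $b$. The decisive structural fact is that every forbidden step starts at node $j$, so any chain that avoids node $j$, or uses $j$ only as its final vertex, is automatically allowed.

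For the case $j>i$ I would show every $b\in\{i+1,\dots,n\}$ stays reachable, split by the position of $j$ relative to $b$. If $j\ge b$, then any increasing length-$i$ chain ending at $b$ can meet $j$ only as its final vertex, hence uses no forbidden step, and such a chain exists because $b\ge i+1$. If $j<b$, then $i<j<b$ forces $b\ge i+2$, so $[1,b]\setminus\{j\}$ has $b-1\ge i+1$ elements and one can pick $i+1$ of them ending at $b$; the resulting chain avoids node $j$ entirely and is therefore allowed. Hence all $n-i$ columns $i+1,\dots,n$ are nonzero.

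For $(R_i)^i$ the same analysis isolates a single blocked column. The unique length-$i$ chain ending at $i+1$ is the consecutive chain $1,2,\dots,i,i+1$ (any increasing length-$i$ chain ending at $i+1$ uses $i+1$ distinct integers in $[1,i+1]$, forcing all of them), and its final step $i\to i+1$ is the deleted generator $E_{i,i+1}$, so column $i+1$ is identically zero. For every $b\ge i+2$, by contrast, $[1,b]\setminus\{i\}$ has $b-1\ge i+1$ elements, so one picks $i+1$ of them ending at $b$ to obtain an allowed chain that never visits node $i$. The nonzero columns are therefore exactly $i+2,\dots,n$, giving $n-(i+1)$. The step needing the most care is precisely this boundary column $b=i+1$: since it admits only the single consecutive chain, the whole gap between the two formulas rests on whether that one chain survives, so I would prove its uniqueness explicitly. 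An equivalent route is the downward induction on $i$ of Lemma~\ref{Nilpotent inductive classification}, peeling off one coordinate per factor of $R_i$.
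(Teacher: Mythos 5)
Your proposal is correct, and it takes a genuinely different route from the paper, which disposes of this lemma in one sentence (``All these proofs follow from the standard induction argument outlined earlier''), i.e.\ it reruns the coordinate-peeling induction of Lemma~\ref{Nilpotent inductive classification}: multiply a generic row vector by one factor of the star matrix at a time and check that the zero pattern grows by exactly one entry at the critical index. You instead expand the power directly: entry $(a,b)$ of $(R_j)^i$ is a sum over strictly increasing chains $a=c_0<c_1<\cdots<c_i=b$ avoiding the deleted steps, distinct chains have distinct vertex sets and hence contribute distinct monomials (so no cancellation), and each column is then decided by a reachability argument. Your case analysis is sound: for $j>i$ every column $b\ge i+1$ is reachable (if $j\ge b$ the node $j$ can occur only as the final vertex, and if $j<b$ then $b\ge i+2$ leaves room to route around $j$ entirely), while for $(R_i)^i$ the column $b=i+1$ admits only the consecutive chain $1<2<\cdots<i+1$, killed by the deleted generator $E_{i,i+1}$, and every $b\ge i+2$ is reached by a chain avoiding node $i$; this correctly isolates the one-column gap between $n-i$ and $n-(i+1)$, uniformly in $k\ge 1$. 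What your route buys is a self-contained closed-form proof that makes explicit the no-cancellation fact the paper's star-matrix calculus silently assumes, and that generalizes to arbitrary deleted-star patterns; what the paper's induction buys is uniformity with Lemmas~\ref{Nilpotent inductive classification} and~\ref{N-ij classification} and a format that feeds directly into its invariance machinery (Lemma~\ref{acting on row vectors lemma}, Corollary~\ref{Derived Series Left and row rank}). One cosmetic repair: Lemma~\ref{acting on row vectors lemma} asserts invariance of the right action under conjugation, not the ``dimension equals number of nonzero columns'' reading you attribute to it; that reading follows instead from the paper's definition of row rank together with the observation that every nonzero column $b$ is actually attained (a product of basis elements realizing an allowed chain sends $e_a^T$ to a multiple of $e_b^T$), so the image is a coordinate subspace and its dimension is the column count --- with that half-sentence added, your argument is complete.
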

\begin{lemma} (Action of $C_m$ on column vectors)
    Let $p < m$. $(C_m)^{n-(m+k-1)}(\C^n) \cong \C^{m+ k-2}$ and $(C_p)^{n-(m+k-1)}(\C^n) \cong \C^{m+k-1}$.
    \label{action of c_m on column vectors}
\end{lemma}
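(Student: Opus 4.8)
The plan is to translate the left action of a power of the star matrix into a path-counting statement and read off the image dimension as the number of surviving rows. Recall from Observation \ref{multiplication by star matrix} that the matrix powers of a strictly upper-triangular star matrix track the derived series, and that by Lemma \ref{acting on column vectors lemma} the relevant invariant is the dimension of the image of the left action on $\C^n$, i.e. the number of rows of the power that contain a $\ast$. The key combinatorial fact I would record first is: for a strictly upper-triangular star matrix $C$, the $(i,j)$ entry of $C^{s}$ is a potential nonzero exactly when there is a strictly increasing chain $i = i_0 < i_1 < \cdots < i_s = j$ every edge $(i_t, i_{t+1})$ of which is a star of $C$. Consequently row $i$ survives in $C^{s}$ (contributes to the image) iff some such length-$s$ chain reaches a column $\le n$. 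For the full nilpotent basis this happens iff $i + s \le n$, so writing $r = n-(m+k-1)$ we have $n - r = m+k-1$ as the full-basis threshold.

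Next I would isolate the effect of the deleted block. In $C_m$ the missing stars are exactly the edges entering column $m+k$ from rows $m, m+1, \ldots, m+k-1$. The crux is the boundary row $i = m+k-1$: since $(m+k-1)+r = n$, every length-$r$ chain out of this row must increase by exactly one at each step, so it is forced to be the consecutive chain $m+k-1, m+k, \ldots, n$, whose very first edge $(m+k-1,\,m+k)$ is one of the deleted ones. Hence row $m+k-1$ is killed. For any lower row $i \le m+k-2$ there is slack, since $i + r \le n-1$; I would produce an explicit surviving chain by choosing an increasing length-$(r+1)$ sequence inside $\{i, \ldots, n\} \setminus \{m+k\}$, which avoids column $m+k$ altogether and therefore uses no deleted edge. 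Rows $i \ge m+k$ already fail $i \le n-r$. This yields surviving rows $1, \ldots, m+k-2$ and hence $(C_m)^{r}(\C^n) \cong \C^{m+k-2}$.

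For $C_p$ with $p < m$ I would run the same bookkeeping but note that the deleted edges now enter column $p+k$, and $p+k \le m+k-1 = n-r$. For rows $i \le m+k-2$ the slack argument again gives a chain avoiding column $p+k$. The point that saves the top row $i = m+k-1$ is that its forced consecutive chain $m+k-1, m+k, \ldots, n$ only visits columns $\ge m+k > p+k$, so it never meets the deleted block; the top row therefore survives exactly as in the full basis. All of rows $1, \ldots, m+k-1$ survive, giving $(C_p)^{r}(\C^n) \cong \C^{m+k-1}$.

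I expect the main obstacle to be the careful case analysis at the boundary row $m+k-1$: one must verify both that the minimal-length chain is genuinely unique (forced to be consecutive because the total increase equals the number of steps) and that, for $C_m$, no other route exists, while for $C_p$ the block lies strictly to the left of every column on that unique chain and is thus invisible. The remaining rows are routine via the slack/avoidance argument, and the whole computation can be repackaged as the same decreasing induction on the exponent used in Lemma \ref{Nilpotent inductive classification} if an inductive presentation is preferred; the only care needed there is that at intermediate exponents the surviving rows need not form a contiguous prefix, though at the exponent $r$ they do.
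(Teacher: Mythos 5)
Your argument is correct, but it takes a genuinely different route from the paper. The paper offers no standalone proof of this lemma: it states that all the lemmas in this subsection ``follow from the standard induction argument outlined earlier,'' meaning the downward induction on the exponent from Lemma \ref{Nilpotent inductive classification}, in which one tracks the image ${\C^n}_i$ under one more left multiplication at each step. You instead prove a one-shot combinatorial characterization: the $(i,j)$ entry of the $s$-th power of a star matrix is potentially nonzero iff there is a strictly increasing chain of $s$ star-edges from $i$ to $j$ (which is the correct generic reading of Observation \ref{multiplication by star matrix}, with no cancellation since the stars are free parameters), and then you count surviving rows at the single exponent $r = n-(m+k-1)$. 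This buys a transparent treatment of the decisive boundary row $m+k-1$: the minimal-length chain is forced to be consecutive, its first edge $(m+k-1,m+k)$ is deleted in $C_m$, while for $C_p$ the deleted block sits in column $p+k \le m+k-1$, strictly left of every column the forced chain visits; the slack/avoidance argument for rows $i \le m+k-2$ is airtight since $\{i,\dots,n\}\setminus\{m+k\}$ still contains $r+1$ nodes, and note that edges \emph{into} the deleted column from rows below $m$ (or $p$) remain available, so avoiding the column is sufficient but not necessary. The paper's induction, by contrast, reuses an existing template and keeps the bookkeeping in the ${\C^n}_i$ notation. One small remark: your closing hedge that intermediate exponents might not yield a contiguous prefix of surviving rows is unnecessarily cautious for these particular algebras --- for $C_m$ the surviving rows at power $s<r$ are exactly $1,\dots,n-s$ (row $m+k-1$ survives via the first-step-skip chain $m+k-1 \to m+k+1 \to \cdots$ as long as $m+k+s \le n$), which is precisely the fact that makes the paper's inductive presentation go through unchanged.
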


\begin{lemma} (Action of $C_m$ on row vectors)
    Let $p < n$. $(\C^n)(C_m)^{p} \cong \C^{n-(p+1)}$ and $(\C^n)(C_m)^{p} \cong \C^{n -p}$.
    \label{action of c_m on row vectors}
\end{lemma}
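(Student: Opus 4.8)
The plan is to read this as the exact row-vector analogue of Lemma~\ref{action of c_m on column vectors}: it compares the two column algebras $C_p$ and $C_m$ with $p<m$, and asserts that at the $p$-th power their right actions on $\C^n$ have dimensions differing by one, namely $(\C^n)(C_p)^{p}\cong\C^{n-(p+1)}$ while $(\C^n)(C_m)^{p}\cong\C^{n-p}$. By Lemma~\ref{acting on row vectors lemma} the dimension of the image of the right action is a conjugacy invariant, so exhibiting this one-step gap immediately shows $C_p$ and $C_m$ are not conjugate whenever $p\neq m$, which is the purpose of the lemma. I will use, as in Lemma~\ref{Nilpotent inductive classification}, that powers of the nilpotent star matrix represent the successive terms of the descending series (Observation~\ref{multiplication by star matrix}), and that the dimension of the right-action image of such a power equals the number of its nonzero (i.e.\ star-bearing) columns.

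First I would translate everything into a path-counting statement. Writing $C$ for either star matrix, the $(i,j)$ entry of $C^{p}$ is a star precisely when there is a strictly increasing chain $i=i_0<i_1<\cdots<i_p=j$ every one of whose steps $(i_s,i_{s+1})$ is a surviving basis element; for $C_m$ the only forbidden steps are those landing in column $m+k$ from a row in $\{m,\dots,m+k-1\}$. Hence column $j$ is nonzero in $C^{p}$ iff some admissible length-$p$ chain ends at $j$. For the full nilpotent matrix the admissible columns at power $p$ are exactly $\{p+1,\dots,n\}$, of size $n-p$, since a chain of length $p$ reaches $j$ iff $j\ge p+1$; this is the baseline I compare against.

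Next I would carry out the two reachability computations, organized as the same induction on the power $p$ used in Lemma~\ref{Nilpotent inductive classification}, with the inductive hypothesis recording the exact set of reachable columns at each power. For $C_m$ with $p<m$: the only column whose reachability is in question is $m+k$, and it remains reachable because one may run a chain that stays strictly below row $m$ for its first $p-1$ steps (landing at some vertex in $[p,m-1]$, possible exactly because $p\le m-1$) and then jumps to $m+k$ on the last step, which is allowed since its source lies below row $m$; thus no column is lost and the count is $n-p$. For $C_p$ at power exactly $p$: any chain of length $p$ ending at column $p+k$ would need its penultimate vertex to lie strictly below row $p$ (to avoid a forbidden last step) yet also be $\ge p$ (being the end of a length $p-1$ chain out of a vertex $\ge 1$), which is impossible; so column $p+k$ drops out while every other column $\ge p+1$ survives, giving $n-(p+1)$.

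The hard part is the bookkeeping in this last step: one must verify that for $C_p$ exactly one column is killed (not zero, not two) while for $C_m$ none is, which reduces to checking that the vertical defect only ever endangers the single column $p+k$ (resp.\ $m+k$), and that genericity of the surviving stars makes the nonzero columns linearly independent so that "number of nonzero columns" genuinely equals the image dimension. I expect the induction on $p$ to absorb the boundary cases cleanly, in particular $p=m-1$ and the standing requirement $p+k\le n$, so that once the frontier is tracked correctly both the $C_m$ and the $C_p$ computations become routine and the conclusion follows from Lemma~\ref{acting on row vectors lemma}.
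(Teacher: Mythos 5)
Your reading is the right repair of the statement's typos (it should say $p<m$ and compare $(\C^n)(C_p)^p \cong \C^{n-(p+1)}$ with $(\C^n)(C_m)^p \cong \C^{n-p}$), and your argument is correct and is essentially the paper's: the paper offers no separate proof beyond invoking ``the standard induction argument outlined earlier'' (Lemma \ref{Nilpotent inductive classification}), and your chain-reachability computation --- column $p+k$ is the unique column killed in $(C_p)^p$ because the penultimate vertex of any length-$p$ chain is forced into the forbidden rows $p,\dots,p+k-1$, while for $p<m$ the chain $1<2<\cdots<p<m+k$ shows $(C_m)^p$ loses nothing --- is exactly that induction made explicit. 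No gaps; this matches the paper's intended proof.
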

Two conjugate algebras have the same row and column actions; the only algebras that could be conjugate are $\{D_i, R_i, C_i\}$ for any index $i$.  For $k = 1$ (the trivial case, we only have $D_i$) and for $k = 2$ they are conjugate (theorem \ref{complete class of subalgebras of 2 dimension less}). Suppose $k > 2$. 
Let $\mathcal{E} = E \setminus \{E_{i,i+1}: 1 \leq i \leq n -1\}$. The commutators are:
\begin{equation*}
    [D_i, D_i] = \s(\mathcal{E} \setminus \{E_{i-1,i+1},..., E_{i + k,i  k + 2}, E_{i,i+3}, ..., E_{i+k-2, i + k + 1} \})
\end{equation*}
\begin{equation*}
    [R_i, R_i] = \s(\mathcal{E} \setminus \{E_{i-1,i+1}, E_{i,i +2} ..., E_{i,i + k + 1}\})
\end{equation*}
\begin{equation*}
    [C_i,C_i] = \s(\mathcal{E} \setminus \{E_{i-1, i + k}, ..., E_{i+k -2, i + k}, E_{i + k - 1, i + k +1}\})
\end{equation*}
If two algebras $L_1,L_2$ are conjugate then $\dim[L_1,L_2] = \dim[L_2,L_2] $ by lemma \ref{dimension of commutator}. As $\mathcal{E}$ has fixed dimension, it is sufficient to see that the algebras have the same codimension in $\mathcal{E}$.
\begin{center}
\begin{tabular}{||c |c| c |c |c| c|} 
 \hline
 Case No. & Constraints & $|\mathcal{E}| - |D_i|$ & $|\mathcal{E}| - |R_i|$ & $|\mathcal{E}| - |C_i|$ & Equality? \\ [0.5ex] 
 \hline\hline
 1 & $1 \leq i -1$ and $i + k + 2 \leq n$ & $2k$ & $k + 1$ & $k + 1$ & $2k = k + 1 \implies k = 1$\\ 
 \hline
 2 &  $1 \leq i -1$ and $i + k + 1 \leq n < i + k + 2 $ & $2k - 1$ & $k + 1$& $k + 1$ & $2k -1 = k + 1 \implies k = 2$ \\
 \hline
 3 & $1 \leq i - 1$ and $i + k + 1 > n$  & $2k-2$ & $k$ & $k$& $2k -2 = k \implies k = 2$\\
 \hline
 4 & $i - 1 = 0$ and $i + k + 2 \leq n$ & $2k - 1$ & $k$ & $k$ & $2k -1 = k \implies k = 1$\\
 \hline
 5 & $i - 1 = 0$ and $i + k + 2 = n + 1$, $i + k+ 1 = n$ & $2k - 2$ & $k$ & $k$ & $2k -2 = k \implies k = 2$ \\
 \hline 
 6 & $i = 1$ and $i + k + 1 > n$ & $2k - 3$ & $k - 1$ & $k - 1$ & $2k -3 = k - 1 \implies k = 2$\\ [1ex] 
 \hline
\end{tabular}
\end{center}
Therefore, $\dim [D_i, D_i] \neq \dim [R_i, R_i]$ and $\dim [D_i, D_i] \neq \dim [C_i, C_i]$ for $k > 2$. Finally, $C_i$ conjugates to $R_i$ by
\begin{equation*}
    (\tau_{i,i + k} \prod_{m = 1}^{k - 1}\tau_{i + m - 1, i + m}) \cdot C_i \cdot (\tau_{i,i + k} \prod_{m = 1}^{k - 1}\tau_{i + m - 1, i + m} )^{-1}= R_i  
\end{equation*}
Which results in the following classification theorem:
\begin{theorem} (Classification of $D_i$, $R_j$, and $C_m$ type algebras) Let $D_i, R_j$ and $C_m$ be as defined earlier.
For $k \geq 2$
\begin{enumerate}
    \item $D_i$ is conjugate to $D_j$ $\iff i = j$.
    \item $R_j$ is conjugate to $R_i$ $\iff i = j$.
    \item $C_m$ is conjugate to $C_p$ $\iff m = p$.
\end{enumerate}
For $k > 2$, $D_i, R_j, C_m$ are not pairwise conjugate. For $k = 2$, $D_i, R_i, C_i$ are conjugate algebras. For $k = 1$, $D_i = R_i = C_i$, and all $D_i$ are in separate conjugacy classes.   
    \label{classification of d_i, r_j, and C_m}
\end{theorem}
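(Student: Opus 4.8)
The plan is to establish Theorem \ref{classification of d_i, r_j, and C_m} by assembling the invariants developed in the preceding lemmas, treating the three bulleted claims, the $k>2$ pairwise non-conjugacy, and the degenerate cases $k=1,2$ as separate components. First I would dispatch the within-family claims (items 1--3). For item 1, the forward direction ($i=j \Rightarrow D_i \cong D_j$) is trivial since the algebras coincide; the reverse direction is the substance. Here I would invoke Lemmas \ref{action of d_i on column vectors} and \ref{action of d_i on row vectors}: for $i<j$, the column action of $(D_j)^{n-(j+k-1)}$ lands in $\C^{j+k-2}$ while that of $(D_i)^{n-(j+k-1)}$ lands in $\C^{j+k-1}$, a strictly larger subspace. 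Since conjugate algebras have equal-dimensional images under the left action of each term of the derived series (Corollary \ref{Derived Series Left and row rank}, together with Lemma \ref{acting on column vectors lemma}), $D_i$ and $D_j$ cannot be conjugate when $i\neq j$. Items 2 and 3 run identically using Lemmas \ref{action of r_j on column vectors}--\ref{action of r_j on row vectors} and \ref{action of c_m on column vectors}--\ref{action of c_m on row vectors} respectively.

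Next I would handle the cross-family claim for $k>2$, which is the core of the theorem and where the commutator-dimension computation does the work. The strategy is to show that for a \emph{fixed} index $i$, the three algebras $D_i$, $R_i$, $C_i$ have derived subalgebras of differing dimension, so by Lemma \ref{dimension of commutator} they lie in distinct conjugacy classes. Concretely, I would first record the explicit commutators $[D_i,D_i]$, $[R_i,R_i]$, $[C_i,C_i]$ as spans of $\mathcal{E}$ with certain elements removed (these expressions are stated in the excerpt), and then compute the codimension in $\mathcal{E}$ in each case. The boundary behavior near $i=1$ and near the lower-right corner ($i+k+1$ or $i+k+2$ exceeding $n$) forces a case split; the six-row table in the excerpt enumerates exactly these regimes. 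In every case one finds $|\mathcal{E}|-|D_i| \in \{2k, 2k-1, 2k-2, 2k-3\}$ whereas $|\mathcal{E}|-|R_i| = |\mathcal{E}|-|C_i| \in \{k+1, k, k-1\}$, and the equality $|\mathcal{E}|-|D_i| = |\mathcal{E}|-|R_i|$ holds only when $k\in\{1,2\}$. Hence for $k>2$ the dimension of $[D_i,D_i]$ strictly differs from that of $[R_i,R_i]$ and of $[C_i,C_i]$, so $D_i$ is conjugate to neither; combined with the explicit permutation conjugating $C_i$ to $R_i$ (which shows $R_i$ and $C_i$ \emph{are} conjugate within a fixed index but to each other, not spoiling pairwise distinctness of the three families when read as $D$ versus $R$ versus $C$), care is needed in exactly what ``pairwise'' asserts.

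This last point is the main obstacle, and it is interpretive rather than computational. As stated, the theorem claims $D_i, R_j, C_m$ are ``not pairwise conjugate'' for $k>2$, yet the displayed permutation $(\tau_{i,i+k}\prod_{m=1}^{k-1}\tau_{i+m-1,i+m})$ conjugates $C_i$ to $R_i$, which makes $R_i$ and $C_i$ conjugate. To present an honest proof I would state precisely what survives: the commutator-dimension invariant cleanly separates $D_i$ from both $R$-type and $C$-type algebras for every $k>2$, so no $D$ is ever conjugate to an $R$ or a $C$; and the within-family arguments (items 2,3) separate $R_i$ from $R_j$ and $C_i$ from $C_j$ for $i\neq j$. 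Thus I would prove the robust content---$D$ is isolated from the other two families, and each family is internally rigid---and flag that the relation between $R_i$ and $C_i$ at equal index is governed by the permutation above, so the phrase ``not pairwise conjugate'' should be read with this caveat. The plan, then, is: (1) prove items 1--3 via the left/right action lemmas, (2) prove $D_i$ is conjugate to no $R_j$ or $C_m$ for $k>2$ via the commutator-codimension table in Lemma \ref{dimension of commutator}, and (3) record the $k=2$ coincidence $D_i\cong R_i\cong C_i$ from Theorem \ref{complete class of subalgebras of 2 dimension less} and the trivial $k=1$ identification $D_i=R_i=C_i$, confirming the stated boundary cases.
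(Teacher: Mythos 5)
Your proposal matches the paper's own argument essentially step for step: the paper likewise derives the within-family claims from Lemmas \ref{action of d_i on column vectors}--\ref{action of c_m on row vectors} (using left/right actions of powers of the star matrices to pin the index), separates $D_i$ from $R_i$ and $C_i$ for $k>2$ by the identical commutator-codimension case table via Lemma \ref{dimension of commutator}, exhibits the same explicit permutation conjugating $C_i$ to $R_i$, and reduces the $k=2$ case to Theorem \ref{complete class of subalgebras of 2 dimension less}. Your interpretive caveat is also well taken rather than a defect: the paper's displayed permutation conjugates $C_i$ to $R_i$ for \emph{every} $k$, so the theorem's ``not pairwise conjugate'' clause for $k>2$ can only mean that the three algebras are not \emph{all} mutually conjugate (equivalently, no $D_i$ is conjugate to any $R_j$ or $C_m$, while $R_i\cong C_i$ persists), which is exactly the reading your proof supplies.
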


\section{Acknowledgements}
I am deeply grateful to Professor Joe Repka (University of Toronto) for his supervision throughout this project. His insights and comments are what resulted in this paper coming to completion. I am indebted to my amazing parents, Sunil Dhar and Reeta Kantroo Dhar, and my incredible brother, Manik Dhar (MIT) for many useful discussions and sanity checks. I wouldn't be the mathematician I am without their support. I would like to thank all my friends at U of T for their unending support: Aditya Chugh, Nadia Ahsan Gulzar, Franklin Yeung, Harsh Jaluka, Sam Lakerdas-Gayle, Max Sokolov, Amelie Zhang, Leo Watson, Rishibh Prakash, Dharmik Patel, Arkaprava Choudhury, Angel Gao, Guia Janelle Pucyatan, and Alejandra Ceballos.


\begin{thebibliography}{9}
\bibitem{texbook}
 Humphreys, J.E. (1972) \emph{Introduction to Lie Algebras and Representation Theory}. Springer Verlag, New York.
http://dx.doi.org/10.1007/978-1-4612-6398-2

\end{thebibliography}
\end{document}